\DeclareMathAlphabet{\mathpzc}{OT1}{pzc}{m}{it}
\def\jala{\hspace{-1mm}}
    \def\cf{{\em cf.\ }}
    \def\g{\mbox{\rm g}}
    \def\sat{\mbox{sat}}
    \def\tmax{t^{\mbox{\scriptsize max}}}
    \def\rref#1{(\ref{#1})}
    \def\ep{\varepsilon}
    \def\qd{q_d}
    \def\dqd{\dot q_d}
    \def\ddqd{\ddot q_d}
    \def\dtq{\dot {\tilde q}}
    \def\tq{{\tilde q}}
    \def\mR{\mathbb{R}}
    \def\mRp{\mathbb{R}_{\geq 0}}
    \def\ie{{\it i.e.}}
    \def\dty{\displaystyle}
    \def\opchi{\shrinkthis[0.65]{.92}{\mathcal X}}
    \def\opxi{\shrinkthis[0.85]{1}{\xi}}
    \def\topxii{\shrinkthis[0.95]{1}{\tilde\xi}_i}
    \def\topxi#1{\shrinkthis[0.95]{1}{\tilde\xi}_{#1}}
    \def\dopxi#1{\shrinkthis[0.95]{1}{\dot\xi}_{#1}}
    \def\dtopxi#1{\shrinkthis[0.95]{1}{\dot{\tilde\xi}}_{#1}}
    \def\kronecker{\mbox{\small $\stackrel{\otimes}{}$}}
    \def\dopchi{{\dot\opchi}}
       \newcommand\dgdots[1]{%
      \begin{rotate}{#1}
       $\vdots$
      \end{rotate}
    }
    \def\downparenfill{$\m@th\braceld\leaders\vrule\hfill\bracerd$}
    \def\overparen#1{\mathop{\vbox{\ialign{##\crcr\crcr
    \noalign{\kern0.4ex}
    \downparenfill\crcr\noalign{\kern0.4ex\nointerlineskip}
    $\hfil\displaystyle{#1}\hfil$\crcr}}}\limits}
    \newcommand\vecttwo[2]{
      \begin{bmatrix}
        \norm{#1}\\[1mm] \norm{#2}
      \end{bmatrix}
    }
    \newcommand{\mattwo}[3]{
      \begin{bmatrix}
        #1 & #2 \\[1mm] * & #3
      \end{bmatrix}
    }
    \newcommand{\norm}[1]{\left|#1\right|}
    \newcommand{\sfrac}[2]{\mbox{\small $\dty\frac{#1}{#2}$} }
    \newcommand\shrinkthis[3][1.08]{ \scalebox{#2}[#1]{\mbox{$#3$}} }
    \newcommand{\tonio}[1]{\par {\color{red}{\bf T: #1}\par}}
    \renewcommand{\tonio}[1]{}
    \newtheorem{definition}{Definition}
    \newtheorem{theorem}{Theorem}
    \newtheorem{assumption}{Assumption}
    \newtheorem{proposition}{Proposition}
    \newtheorem{corollary}{Corollary}
    \newtheorem{lemma}{Lemma}
{\theorembodyfont{\rm} 
    \newtheorem{remark}{Remark}
    \newtheorem{example}{Example}}
    \def\qed{\mbox{}\hfill{$\blacksquare$}}
    \newenvironment{proof}{\noindent {\bf Proof. }}{\qed}
\title{ \Large\bf \vspace{5mm}%
{Observer-less Output Feedback Global Tracking Control\\ of Lossless Lagrangian Systems}}
\author{ {\Large
Antonio Lor\'{\i}a} \ \\[2mm]
CNRS, France\\[-1mm]
{\small\tt loria@lss.supelec.fr}} \vskip 10pt
\date{}
\begin{document}

\maketitle

\parskip=2pt plus 1pt minus 1pt
\global\setlength\theorempreskipamount{2pt} 
\global\setlength\theorempostskipamount{2pt} 

\begin{abstract}
We obviate the use of observers for the purpose of output feedback tracking control of Lagrangian systems and solve some long-standing yet well-documented open problems. As often implemented in control practice, we replace unavailable derivatives with approximate differentiation. Our contribution consists in establishing  {\em uniform global} asymptotic stability in closed-loop, for Lagrangian systems without dissipative forces (friction) using only position feedback. Firstly, for fully-actuated relative-degree-two systems, the controller is reminiscent of passivity-based controllers for robot manipulators and consists in a {\em linear} dynamic system together with a globally-Lipschitz  control law. Establishing a {\em global} uniform result, all the more with such a simple controller, is particularly valuable relatively to the literature of output-feedback control of systems with non-globally-Lipschitz nonlinearities in the unmeasured variables. This first contribution solves a long-standing open problem and, as a matter of fact, recasted in a general context this result is at the edge of what is achievable --see \cite{MAZPRADAY}. Then, we show that our control approach may be applied to a more general problem, that of tracking control of Lagrangian systems augmented by a chain of integrators (relative degree $m+2$ systems). As a corollary, we solve the global-tracking control problem for {\em flexible-joint} robots but also for systems coupled with output-feedback linearizable actuator dynamics. Finally, we discuss remaining open problems of fairly general interest in the realm of  analysis and design of robust nonlinear systems.
\end{abstract}

\section{Introduction}\label{sec:intro}
\tonio{Modify the introduction to speak of higher-relative degree systems.}

We study Euler-Lagrange systems defined by the equation
\begin{eqnarray}\label{10}
D(q) \ddot q + C(q,\dot q)\dot q + g(q) =u
\end{eqnarray}
where $q\in\mR^n$ denotes the generalized positions, $\dot q$ denotes the generalized velocities, $D:\mR^n\to\mR^{n\times n}$ corresponds to the inertia matrix function, $C:\mR^n\times \mR^n\to\mR^{n\times n} $ corresponds to the Coriolis and centrifugal forces matrix function, $g:\mR^n\to \mR^n$ represents the vector of forces which are derived from the potential energy function $U: \mR^n\to \mR$ \ie, $g(q):= \frac{\partial U}{\partial q}(q)$ and $u\in\mR^n$ is the vector of control inputs. All functions are assumed to be once-continuously differentiable in their arguments.

We revisit the problem of output-feedback tracking control, which consists in designing a dynamic  controller with output $u$ that  makes use of $q$ as the only plant measurement and ensures that, given a smooth bounded trajectory $t\mapsto \qd$, the generalized coordinates satisfy 
\[
\lim_{t\to \infty} q(t) \to \qd(t),  \quad \lim_{t\to \infty} \dot q(t) \to \dqd(t).
\]
More precisely (and of much higher difficulty) the problem that we address consists in establishing uniform global asymptotic stability of the origin of the closed-loop system. We put special emphasis on the qualifier global which implies that the property must hold for all initial states of the closed-loop system, including the tracking errors in $\mR^{2n}$ as well as the controller's states. Not to be confused with inappropriate terminologies such as ``global on the set $\mathcal X \subset \mR^n$''  or the weaker property ``global in the plant's variables and semi-global in the controller's'', established in other related articles.

In the last 25 years or so there have been numerous attempts to solve the problem mentioned above, as a particular paradigm of dynamic output feedback control of nonlinear systems. See for instance  \cite{Marino1991115}, as well as other works by the same authors, on output feedback linearization. In a similar train of thought we find methods that rely on the ability to perform a coordinate transformation of system \rref{10} into models that are linear in the unmeasured velocities. See for instance the work of G. Besan\c{c}on --\cite{GILDASPHD,BES00} and subsequent references. However, it has been long recognized that such transformations are inapplicable to many physical systems; even to simple two-degrees-of-freedom planar robots with revolute joints --see \cite{SPOFACT}. 

Other works focus on robot tracking control. For instance, in the classic paper \cite{BURNAN} the author presents a proof of uniform asymptotic stability using the controller for set-point regulation --indeed, the latter was independently published in \cite{KEL1} and \cite{BERNIJ}. In \cite{BURNAN} the author invokes Tychonov's theorem to show uniform global asymptotic stability provided that the unique pole of the dirty-derivatives filter used in \cite{KEL1} is placed at $-\infty$ that is, the result actually establishes semi-global asymptotic stability. The same property is achieved via Lyapunov's direct method in \cite{BER,al:LORORT}. Relying on the practically reasonable but theoretically restrictive assumption that the system possesses natural viscous friction, the authors of \cite{NUNHSU} established global asymptotic stability. That is, the model considered in this reference is 
\begin{equation}\label{confriccion}
  D(q) \ddot q + C(q,\dot q)\dot q + F\dot q + g(q) =u
\end{equation}
where $F$ is symmetric positive definite. However, under these conditions it is direct to extend the stability property from semi-global to global, for a number of results in the literature.

To the best of the author's knowledge, it has not been established either that uniform global asymptotic stability via output feedback is out of reach  for the system \rref{10}. What is more, it rather seems that the absence of proof or disproof has simply eluded the efforts of many researchers (including this author) throughout the years and is not due to a $structural$ impediment.  This is investigated in the seminal article  \cite{MAZPRADAY}, where the concept of unboundedness observability is introduced. Roughly, from the main results in \cite{MAZPRADAY} it may be concluded that the system
\[
d\ddot q + c\dot q^2 = u, \quad q,\, u\in\mR
\]
cannot be stabilized globally via dynamic output feedback with output $q$. The obstacle is that the system does not possess the unboundedness observability property from $q$ that is, the solution $[\dot q(t), q(t)]$ may escape to infinity even for bounded values of $q(t)$. Notice that this is not the case of Lagrangian systems, which possess the structural property of skew-symmetry of the matrix $\dot{\overparen{D(q)}} - 2C(q,\dot q)$. Indeed, uniform global asymptotic stability of systems
\[
d(q)\ddot q + c(q)\dot q^2 + g(q) = u 
\]
is established in \cite{al:onedof} provided that $\partial d(q)/\partial q = 2c(q)$. As a matter of fact, to our knowledge this is the only article that presents a dynamic output-feedback controller for Euler-Lagrange systems together with a strict Lyapunov function albeit for {\em one}-degrees-of-freedom systems. The extension of \cite{al:onedof} to the case of $n$-degree-of-freedom systems has not been obtained: {attempts include \cite{new-pinDAW,PETACC06} however, the controller from \cite{new-pinDAW} is guaranteed (in the non-adaptive case, only) to achieve uniform asymptotic stability for any system's initial conditions provided that the controller's trajectories lay in a forward-invariant set. Moreover, the result in \cite{new-pinDAW} relies on the assumption that the model includes viscous friction (of known magnitude in the non-adaptive case) --see Eq. \rref{confriccion}, and that the forces derived from potential energy are bounded. }The controller of \cite{PETACC06} is in implicit form hence it is not implementable without velocity measurements.

Roughly speaking, there are two types of results addressing this problem; those based on Lyapunov's direct method and those which intend to exploit structural properties. In the first case, stability is global only with respect to part of the states --as in \cite{new-pinDAW}, or is  semiglobal --as in \cite{BURNAN,al:LORORT}, among others. In the second case, the structural assumptions needed to perform convenient changes of coordinates do not hold for EL systems --{\em cf.} \cite{SPOFACT}. An exception is the recent PhD thesis \cite{LUPE} --see also \cite{LUPEACC13}, where the author presents a global result\footnote{The thesis \cite{LUPE} and the proceedings article \cite{LUPEACC13} were presented during the preparation of this paper. See also the independent simultaneous article \cite{ACC13-globtrack} which constitutes a preliminary version of this paper.} for {\em Hamiltonian} systems that relies on a clever but intricate observer-design due to \cite{Astolfi2010182} and a change of coordinates that involves the computation of the square root of\footnote{Although difficult in general, the author of \cite{LUPE} stresses that this may be computed numerically for each fixed $q$.} $D(q)^{-1}$ --see \cite{al:GER99,al:GLOBTRACKTAC} in the same spirit.

The rest of the paper is organized as follows. For the sake of clarity, in Section \ref{sec:defs}  we recall basic stability definitions. In Section \ref{sec:globtrack} we present our first result which solves the open problem described above, for systems with relative degree 2 ({\em e.g.}, fully-actuated Lagrangian systems). In Section \ref{sec:extension} we extend our results to the case of higher relative degree. In Section \ref{sec:disc} we provide some comments and discuss open problems, before concluding with some remarks in Section \ref{sec:concl}.

\section{Preliminaries}
\label{sec:defs}
\noindent {\bf Notation}. Recall that a continuous function $\alpha:\mRp\to\mRp$ is of class $\mathcal K$ if it is strictly increasing and $\alpha(0)=0$, a continuous function $\sigma:\mRp\to\mRp$ is of class $\mathcal L$ if it is strictly decreasing and $\sigma(s)\to 0$ as $s\to \infty$; a continuous  function $\beta:\mRp\times\mRp\to\mRp$ is of class $\mathcal K\mathcal L$ if $\beta(r,\cdot)\in \mathcal L$ and $\beta(\cdot,s)\in \mathcal K$; a continuous function $\alpha:\mRp\to\mRp$ is of class $\mathcal K_\infty $ if $\alpha\in\mathcal K$ and $\alpha(s)\to \infty$ as $s\to\infty$. We denote by  $\norm{\,\cdot\,}$, the Euclidean norm of vectors (or any other compatible norm) and the induced norm of matrices.

To remove all possible ambiguity we recall a few definitions of stability from \cite{HAH} and some statements which are either known or are re-stated in an original manner, for the purposes of this article. Consider the dynamic system
\begin{equation}
\label{one}
\dot x = f(t,x), \quad x\in\mR^n,\, t \in \mRp
\end{equation}
where $f$ satisfies the conditions for existence and uniqueness of solutions and $f(t,0)\equiv 0$. We denote by $x(t,t_\circ,x_\circ)$ or, when the context is clear, by $x(t)$ the solutions of \rref{one} with initial times $t_\circ\in\mRp$ and initial states $x_\circ\in\mR$ that is, we have  $x(t_\circ,t_\circ,x_\circ) = x_\circ$.

\begin{definition}[Uniform global boundedness]
  \label{def:ugb}
The solutions of \rref{one} are said to be uniformly globally bounded if there exist $\gamma\in {\cal K}_\infty$ and $c>0$ such that, for all $(t_{\circ},x_\circ) \in \mRp \times \mR^{n}$, each solution $x(\cdot,t_{\circ},x_{\circ})$ satisfies 
\begin{equation}
\label{eq:ugb}
  \norm{x(t,t_{\circ},x_{\circ})} \leq \gamma(\norm{x_{\circ}}) + c  \qquad \forall \; t \geq t_{\circ}.
\end{equation}
\end{definition}
Note that for any $r$ there exists $R$ independent of $t_{\circ}$ such that $\norm{x_{\circ}}\leq r$ implies that $\norm{x(t,t_{\circ},x_{\circ})} \leq R$. This property is commonly established via auxiliary functions.
\begin{theorem}
\label{thm:ugb}Let $V:\mRp\times\mR^n\to \mRp$ be continuously differentiable; let $\alpha_1$, $\alpha_2$ be functions of class $\mathcal K_\infty$ and let $a$, $c\in\mR$ be such that $c>0$ and 
\[
\alpha_1(\norm{x}) \geq V(t,x) \geq \alpha_2(\norm{x}) + a \qquad \forall\ (t,x) \in\mRp\times\mR^n
\]
\[
  \dot V(t,x) := \frac{\partial V}{\partial t} + \frac{\partial V}{\partial x}f(t,x) \leq 0 \qquad \forall\ t\in\mRp,\ x :  \norm{x}\geq c 
\]
Then, the solutions of \rref{one} are uniformly globally bounded.
\end{theorem}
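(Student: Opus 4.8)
The plan is to follow the evolution of $V$ along an arbitrary solution and show that it can never exceed a threshold determined by its initial value together with the size of $V$ on the ball $\norm{x}\le c$; the desired uniform bound on $\norm{x(t)}$ then drops out of the sandwich inequality. Write $v(t):=V(t,x(t,t_\circ,x_\circ))$, which is differentiable with derivative $\dot v(t)=\dot V(t,x(t))\le 0$ whenever $\norm{x(t)}\ge c$. First observe that on the ball $\norm{x}\le c$ the upper bound yields $V(t,x)\le\alpha_1(\norm{x})\le\alpha_1(c)$, so the natural candidate threshold is $M:=\max\{\,V(t_\circ,x_\circ),\,\alpha_1(c)\,\}$, a quantity independent of $t_\circ$ modulo the data of the solution.

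The core claim is that $v(t)\le M$ for all $t\ge t_\circ$. I would argue by contradiction: if $v(t^*)>M$ for some $t^*>t_\circ$, set $\tau:=\sup\{s\in[t_\circ,t^*]:v(s)\le M\}$. Since $v(t_\circ)=V(t_\circ,x_\circ)\le M$ and $v$ is continuous, this set is nonempty and closed, whence $\tau<t^*$, $v(\tau)=M$, and $v(s)>M$ for every $s\in(\tau,t^*]$. On that subinterval $v(s)>M\ge\alpha_1(c)$ combined with $\alpha_1(\norm{x(s)})\ge v(s)$ and the strict monotonicity of $\alpha_1$ forces $\norm{x(s)}>c$; therefore $\dot v(s)\le 0$ there, so $v$ is non-increasing on $[\tau,t^*]$ and $v(t^*)\le v(\tau)=M$, contradicting $v(t^*)>M$. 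This ``last-crossing'' step is the only delicate point, precisely because the sign condition $\dot V\le 0$ is guaranteed solely outside the ball while a solution may enter and leave $\{\norm{x}\le c\}$ arbitrarily many times; the supremum device circumvents any attempt to control these crossings individually.

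With $v(t)\le M$ in hand, I would return to a state bound through the lower estimate $V(t,x)\ge\alpha_2(\norm{x})+a$. Evaluating the sandwich at $x=0$, where $\alpha_1(0)=\alpha_2(0)=0$, forces $a\le 0$, so $M-a\ge 0$ and $\alpha_2^{-1}$ may legitimately be applied on $\mRp$. This gives $\alpha_2(\norm{x(t)})\le v(t)-a\le M-a$, hence $\norm{x(t)}\le\alpha_2^{-1}(M-a)$. Using $V(t_\circ,x_\circ)\le\alpha_1(\norm{x_\circ})$ and the identity $\max\{\alpha_1(\norm{x_\circ}),\alpha_1(c)\}=\alpha_1(\max\{\norm{x_\circ},c\})$, I obtain $\norm{x(t)}\le\rho(\max\{\norm{x_\circ},c\})$, where $\rho(r):=\alpha_2^{-1}(\alpha_1(r)-a)$ is continuous and strictly increasing with $\rho(r)\to\infty$.

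Finally, since $\rho$ is nondecreasing and nonnegative, $\rho(\max\{\norm{x_\circ},c\})=\max\{\rho(\norm{x_\circ}),\rho(c)\}\le\rho(\norm{x_\circ})+\rho(c)$. Choosing $\gamma(r):=\rho(r)-\rho(0)$, which lies in $\mathcal K_\infty$ because $\gamma(0)=0$ and $\gamma$ inherits strict monotonicity and radial unboundedness from $\rho$, and the constant $\bar c:=\rho(0)+\rho(c)>0$, I conclude $\norm{x(t,t_\circ,x_\circ)}\le\gamma(\norm{x_\circ})+\bar c$ for all $t\ge t_\circ$ and all $(t_\circ,x_\circ)$. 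This is exactly the estimate of Definition \ref{def:ugb}; since $\gamma$ and $\bar c$ depend only on $\alpha_1,\alpha_2,a,c$ and not on $t_\circ$, uniformity is immediate.
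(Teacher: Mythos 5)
Your proof is correct, and it is in fact more careful than the paper's own argument, though both rest on the same two ingredients: the sandwich $\alpha_1(\norm{x})\geq V\geq \alpha_2(\norm{x})+a$ and the fact that $V$ cannot increase while $\norm{x(t)}\geq c$. The paper's proof is a one-line chain, ``$\alpha_1(\norm{x_\circ})\geq V(t_\circ,x(t_\circ))\geq \alpha_2(\norm{x(t)})+a$, hence $\norm{x(t)}\leq \alpha_2^{-1}\big(\alpha_1(\norm{x_\circ})+\norm{a}\big)+c$,'' which tacitly assumes that the value of $V$ along the solution never exceeds its initial value whenever the state is outside the ball of radius $c$. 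That is not automatic: the solution may enter the set $\{\norm{x}\leq c\}$, where no sign condition on $\dot V$ holds, let $V$ grow up to $\alpha_1(c)$, and exit again; a correct uniform bound must therefore carry the extra term $\alpha_1(c)$, which the paper's displayed estimate omits (the omission is harmless for the \emph{conclusion}, since $\alpha_2^{-1}(\alpha_1(c)+\norm{a})$ is state-independent and can be absorbed into the constant $c$ of Definition \ref{def:ugb}, but the estimate as written can be violated when $\alpha_2\ll\alpha_1$ and $\norm{x_\circ}\ll c$). Your threshold $M=\max\{V(t_\circ,x_\circ),\alpha_1(c)\}$ together with the last-crossing (supremum) argument handles arbitrarily many entries and exits of the ball in one stroke, and your verification that $a\leq 0$ (by evaluating the sandwich at $x=0$, using $V\geq 0$) legitimizes applying $\alpha_2^{-1}$ on all of $\mRp$ --- a point the paper passes over. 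The final packaging, $\gamma(r)=\rho(r)-\rho(0)$ with $\rho(r)=\alpha_2^{-1}(\alpha_1(r)-a)$ and $\bar c=\rho(0)+\rho(c)$, correctly produces the class-$\mathcal K_\infty$ gain and positive constant required by \rref{eq:ugb}, with uniformity in $t_\circ$ evident since nothing depends on the initial time. In short: same strategy, but your version supplies the invariant-threshold argument that makes it airtight, and it repairs a genuine (if benign) slip in the quantitative bound stated in the paper's proof.
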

\begin{proof}
For all $t$ such that $\norm{x(t)}\geq c$ we have $\dot V(t,x(t))\leq 0$ that is, $\alpha_1(\norm{x_\circ}) \geq V(t_\circ,x(t_\circ))\geq \alpha_2(\norm{x(t)})+a$. Hence, for all $t\in \mRp$ we have $\norm{x(t)} \leq \alpha_2^{-1}\circ\Big(\alpha_1(\norm{x_\circ})+\norm{a}\Big) + c$. Since $\alpha_2^{-1}\in\mathcal K_\infty$ there exist $\gamma_1$, $\gamma_2\in \mathcal K_\infty$ such that $\norm{x(t)} \leq \gamma_1(\norm{x_\circ}) + \gamma_2(\norm{a})  + c$
\end{proof}

Although unusual in ``modern'' literature, the following fundamental definition may be found for instance, in \cite{HAH}.
\begin{definition}[Uniform global stability]
  \label{def:stability}
  The origin of system \rref{one} is said to be uniformly  globally stable  if there exists $\gamma\in {\cal K}_\infty$ such that for each $(t_{\circ},x_\circ) \in \mRp \times \mR^{n}$, each solution $x(\cdot,t_{\circ},x_{\circ})$ satisfies 
\begin{equation}
\label{us}
  \norm{x(t,t_{\circ},x_{\circ})} \leq \gamma(\norm{x_{\circ}})  \qquad \forall \; t \geq t_{\circ}.
\end{equation}
\end{definition}
Note that uniform global stability tantamounts to uniform stability plus uniform global boundedness.

\begin{theorem}
  Let the conditions of Theorem \ref{thm:ugb} hold for $a=c=0$. Then, the origin of \rref{one} is uniformly  globally stable. If the conditions hold only in an open neighborhood of the origin with $\alpha_1$, $\alpha_2\in \mathcal K$, the latter is uniformly stable.
\end{theorem}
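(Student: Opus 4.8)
The plan is to exploit the monotonicity of $V$ along trajectories, which now holds \emph{globally} because setting $c=0$ renders the differential inequality valid for every $x\in\mR^n$. First I would observe that, since $\dot V(t,x)\leq 0$ for all $x$ satisfying $\norm{x}\geq 0$, the map $t\mapsto V(t,x(t))$ is non-increasing along any solution; hence $V(t,x(t))\leq V(t_\circ,x_\circ)$ for all $t\geq t_\circ$. Combining this with the sandwich bounds, now with $a=0$, namely $\alpha_2(\norm{x(t)})\leq V(t,x(t))$ and $V(t_\circ,x_\circ)\leq \alpha_1(\norm{x_\circ})$, yields $\alpha_2(\norm{x(t)})\leq \alpha_1(\norm{x_\circ})$ and therefore $\norm{x(t)}\leq \alpha_2^{-1}\big(\alpha_1(\norm{x_\circ})\big)$. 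Setting $\gamma:=\alpha_2^{-1}\circ\alpha_1$, which belongs to $\mathcal K_\infty$ as a composition of $\mathcal K_\infty$ functions, establishes \rref{us} and thereby uniform global stability.

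For the second statement, the estimate is structurally identical but one must guarantee that the solution does not leave the neighborhood on which the hypotheses are assumed. Denoting by $\{x:\norm{x}<r\}$ a ball contained in this neighborhood, I would fix an arbitrary $\epsilon\in(0,r)$ and, using $\alpha_1,\alpha_2\in\mathcal K$, choose $\delta>0$ small enough that $\alpha_1(\delta)\leq \alpha_2(\epsilon)$. For any $x_\circ$ with $\norm{x_\circ}\leq\delta$, as long as the solution remains in the ball of radius $r$ the inequality $\dot V\leq 0$ applies, giving $\alpha_2(\norm{x(t)})\leq V(t,x(t))\leq \alpha_1(\norm{x_\circ})\leq \alpha_1(\delta)\leq\alpha_2(\epsilon)$, whence $\norm{x(t)}\leq\epsilon<r$. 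A standard continuity argument then rules out escape from the ball of radius $\epsilon$, so the bound holds for all $t\geq t_\circ$ with $\delta$ independent of $t_\circ$; this is precisely uniform stability.

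I expect no genuine obstacle in the global case: it is an immediate specialization of the computation already performed in the proof of Theorem \ref{thm:ugb}, with the simplification that the offset $a$ vanishes and the inequality $\dot V\leq 0$ holds everywhere. The only point requiring care is the confinement step in the local case. Because the bounds are assumed only on a neighborhood of the origin, one cannot a priori assert $\dot V\leq 0$ outside it, so the choice of $\delta$ relative to $\epsilon$ must be made \emph{before} invoking the differential inequality, and the non-escape of trajectories from the ball of radius $\epsilon$ must be justified by continuity rather than taken for granted.
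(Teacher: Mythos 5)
Your proof is correct and takes essentially the same approach as the paper: with $a=c=0$, monotonicity of $V$ along solutions plus the sandwich bounds give $\norm{x(t)}\le\alpha_2^{-1}\bigl(\alpha_1(\norm{x_\circ})\bigr)$, so $\gamma:=\alpha_2^{-1}\circ\alpha_1\in\mathcal K_\infty$ yields uniform global stability. For the local statement, the paper merely restricts $x_\circ$ so that $\alpha_1(\norm{x_\circ})$ lies in the domain of $\alpha_2^{-1}$ and cites Persidski\u{\i}; your explicit $\epsilon$--$\delta$ choice with the continuity-based non-escape argument is precisely the standard textbook filling-in of that step.
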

\begin{proof}
The statement on globality: following the proof of Theorem \ref{thm:ugb} we have $\norm{x(t)} \leq \alpha_2^{-1}\circ\Big(\alpha_1(\norm{x_\circ})\Big)$. The ``local statement'', which  is due to Persidsk\u{\i}i --\cite{PERSIDSKII33}, follows by restricting $x_\circ$ to a neighborhood such that $\alpha_1(\norm{x_\circ})$ belongs to the domain of $\alpha_2^{-1}$ and appears in numerous textbooks.
\end{proof}
\begin{definition}[Uniform global attractivity]
The origin of system (\ref{one}) is said to be  uniformly globally attractive if  for each  $r,\,\sigma>0$  there exists $T>0$ such that
\begin{equation}
\label{UA}
\norm{x_{\circ}}\leq r  \, \Longrightarrow \, \norm{x(t,t_{\circ},x_{\circ})}
\leq \sigma \qquad  \forall\, t \geq t_{\circ} + T\,
  \ .
\end{equation}
\end{definition}
\begin{definition}[Uniform Global Asymptotic Stability]\label{def:ugas}
The origin  of system \rref{one} is said to be uniformly globally asymptotically stable if it is 
\begin{itemize}\itemsep=-1pt
\item uniformly stable;
\item the solutions are uniformly globally bounded;
\item the origin is uniformly globally attractive.
\end{itemize}
\end{definition}
It is important to emphasize that only all three conditions in Definition \ref{def:ugas} {\em together} imply the existence of a class $\mathcal K\mathcal L$ function $\beta$ such that the solutions of \rref{one} satisfy 
\[
\norm{x(t)}\leq \beta(\norm{x_\circ},t-t_\circ)\qquad \forall \, t \geq t_\circ\geq 0.
\]
The latter leads to the construction of converse Lyapunov functions uniformly monotone and, in turn, implies robustness with respect to external perturbations. Such bound cannot be obtained if any of the three properties in Definition \ref{def:ugas} fails to hold. In particular, uniform global asymptotic stability may not be concluded either from uniform stability plus uniform global attractivity alone --see \cite{TEEZAC}; whence the importance of uniform global boundedness in nonlinear time-varying systems.

The following theorem, which corresponds to {\rm \cite[Lemma 2]{al:TACDELTAPE}}, establishes uniform global asymptotic stability without a strict Lyapunov function, in the spirit of integral criteria such as Barbal\u{a}t's lemma.
\begin{theorem}
\label{thm:uga}
Let $F:\mRp \times \mR^{n} \rightarrow \mR^{n}$ be continuous such that $F(\cdot,0)\equiv 0$. Let the equilibrium  $\{x=0\}$ of $\dot{x} = F(t,x)$ be uniformly globally stable. If moreover, there exists a continuous positive definite function $\gamma: \mR^{n} \to \mRp$ and for each pair of positive numbers $(r,\tilde\upsilon)$, there exists $\beta_{r\tilde\upsilon} > 0$ such that, for all $(t_\circ,x_\circ)\in \mRp\times B_r$, all solutions $x(\cdot,t_\circ,x_\circ)$ satisfy 
\begin{eqnarray}
 \label{eq:int-lem}
\int_{t_\circ}^{t}\Big[\gamma\big(x(\tau,t_\circ,x_\circ)\big)-{\tilde\upsilon}\Big]d\tau \leq \beta_{{r\tilde\upsilon}}
\end{eqnarray}
for all $t \geq t_{\circ}$ then, the origin is  uniformly globally asymptotically stable.
\end{theorem}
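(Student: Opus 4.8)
The plan is to verify, one by one, the three ingredients of uniform global asymptotic stability listed in Definition \ref{def:ugas}. Two of them come for free: by hypothesis the origin is uniformly globally stable, and as noted right after Definition \ref{def:stability} this is exactly uniform stability together with uniform global boundedness. Hence the entire work reduces to proving uniform global attractivity, i.e. the bound \rref{UA}. My strategy for this is to use the integral inequality \rref{eq:int-lem} to force every solution into an arbitrarily small ball within a time uniform in $t_\circ$, and then to invoke uniform global stability a second time to keep it trapped there.

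Concretely, I would fix $r,\sigma>0$ and let $\gamma_{\rm s}\in\mathcal K_\infty$ denote the gain furnished by uniform global stability, so that $\norm{x(t,t_\circ,x_\circ)}\leq\gamma_{\rm s}(\norm{x_\circ})$ for all $t\geq t_\circ$ (evaluating this at $t=t_\circ$ shows $\gamma_{\rm s}(s)\geq s$). Put $R:=\gamma_{\rm s}(r)$, so that every solution issued from $B_r$ stays in $B_R$ forever. If $\sigma\geq R$ the bound \rref{UA} is trivial, so assume $\sigma<R$ and choose $\delta>0$ small enough that $\gamma_{\rm s}(\delta)\leq\sigma$; then $\delta<r\leq R$, so the annulus $A:=\{x:\delta\leq\norm{x}\leq R\}$ is a nonempty compact set not containing the origin. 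Since $\gamma$ is continuous and positive definite it attains on $A$ a strictly positive minimum $\mu:=\min_{x\in A}\gamma(x)>0$.

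The key step is to feed $\tilde\upsilon:=\mu/2$ into \rref{eq:int-lem}, which produces a constant $\beta_{r\tilde\upsilon}$, and to set $T:=2\beta_{r\tilde\upsilon}/\mu+1$. I would then show every solution from $B_r$ visits $B_\delta$ before time $t_\circ+T$: if instead $\norm{x(\tau)}\geq\delta$ for all $\tau\in[t_\circ,t_\circ+T]$, then $x(\tau)\in A$ throughout, so $\gamma(x(\tau))-\tilde\upsilon\geq\mu/2$, and
\[
\beta_{r\tilde\upsilon}\ \geq\ \int_{t_\circ}^{t_\circ+T}\big[\gamma(x(\tau))-\tilde\upsilon\big]\,d\tau\ \geq\ \frac{\mu}{2}\,T\ >\ \beta_{r\tilde\upsilon},
\]
a contradiction. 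Hence there is $t_1\in[t_\circ,t_\circ+T]$ with $\norm{x(t_1)}<\delta$. Applying uniform global stability from the initial time $t_1$ then yields $\norm{x(t)}\leq\gamma_{\rm s}(\norm{x(t_1)})\leq\gamma_{\rm s}(\delta)\leq\sigma$ for all $t\geq t_1$, in particular for all $t\geq t_\circ+T$; since $T$ depends only on $(r,\sigma)$ through $\delta$, $\mu$ and $\beta_{r\tilde\upsilon}$, this is precisely \rref{UA}.

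I expect the main obstacle to be the re-entry phenomenon: the integral bound alone only prevents the trajectory from \emph{dwelling} outside $B_\delta$, and does not by itself forbid it from leaving again after entering. The device that resolves this is uniform global stability invoked at the hitting time $t_1$, which confines the solution to $B_{\gamma_{\rm s}(\delta)}\subseteq B_\sigma$ from then on. The accompanying delicate point is bookkeeping the dependencies: one must check that $\delta$, $\mu$, $\tilde\upsilon$ and $\beta_{r\tilde\upsilon}$, and therefore $T$, are all independent of the initial time $t_\circ$, which is exactly what upgrades plain attractivity to its uniform global form and, together with the two properties inherited from uniform global stability, delivers uniform global asymptotic stability.
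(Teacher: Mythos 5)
Your proof is correct and takes essentially the same route as the paper, which does not spell the argument out but sketches exactly this dwelling-time reasoning in the remark following Theorem \ref{thm:uga} and defers the rigorous version to \cite{al:TACDELTAPE}: the integral bound with $\tilde\upsilon$ below the minimum of $\gamma$ on the annulus forces entry into $B_\delta$ within a time $T$ uniform in $t_\circ$, and uniform global stability re-invoked at the hitting time $t_1$ handles the re-entry issue. (One incidental slip: $\delta<r$ need not hold when $\sigma>r$, but it is never used --- nonemptiness of the annulus already follows from $\delta\leq\gamma_{\rm s}(\delta)\leq\sigma<R$, and solutions starting inside $B_\delta$ visit it trivially at $t_\circ$.)
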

\begin{remark}
The origin being assumed to be uniformly globally stable, the condition imposed by Ineq. \rref{eq:int-lem} guarantees uniform global attractivity. Roughly speaking, \rref{eq:int-lem} implies that the trajectories are integrable, modulo $\gamma(\cdot)$,  out of {\em any} ball of radius depending on $\tilde \upsilon$; this implies that $\gamma(x(t))$ converges to the interior of such ball in finite time. Since $\tilde\upsilon$ is arbitrary, we conclude that \rref{UA} holds. See \cite{al:TACDELTAPE} for rigorous proof.
\end{remark}

\section{Relative-degree-2 systems}
\label{sec:globtrack}

\subsection{Problem statement and solution}
The following assumptions are fairly standard in the literature of robot control but are also satisfied by a number of Euler-Lagrange systems such as electrical and electro-mechanical --see  \cite{al:PBCELS}, as well as some marine systems --see  \cite{THORB}. In particular, these hypotheses hold for robot manipulators composed of revolute joints only or prismatic joints only.
\begin{assumption}
\label{ass1}\
\begin{enumerate}
\item There exist positive real numbers $d_m$ and $d_M$ such that 
\begin{equation*}
d_m \leq \norm{D(q)} \leq d_M, \qquad \forall q\in \mR^{n};
\end{equation*}  
\item  there exists  $k_c > 0$ such that  
\begin{subequations}
\begin{eqnarray*}
\norm{C(x,y)} &\leq & k_c\norm{y}\hspace{11.5mm} \forall\, x,\, y \in \mR^n, \\
C(x,y)z &=& C(x,z)y \qquad \forall\, x, y, z \in \mR^n ;
\end{eqnarray*}  
\end{subequations}
\item the matrix $\dot{\overparen{D(q)}} - 2C(q,\dot q)$ is skew symmetric.
\end{enumerate}
\end{assumption}

\begin{definition}[global output-feedback tracking control]\label{def:pblm}
Consider the EL system \rref{10} under Assumption \ref{ass1}. Suppose that only position measurements are available, and that the properties enumerated in Assumption \ref{ass1} hold. Furthermore, assume that the reference trajectory  $t\mapsto q_d$ is of class ${\cal C}^2$ and that there exists $k_\delta >0$ such that
\begin{equation}
\label{bndonqd}
\max\,\left\{ \sup_{t\geq 0}\norm{\qd(t)},\, \sup_{t\geq 0}\norm{\dqd(t)},\, \sup_{t\geq 0}\norm{\ddqd(t)}\right\} \leq k_\delta\, .
\end{equation}
Under these conditions, find a dynamic output-feedback controller 
\begin{subequations}\label{bla}
  \begin{eqnarray}
\label{bla:a} \dot q_c &=& f(t,q_c,q) \\
\label{bla:b} u &= & u(t,q_c,q)  
\end{eqnarray}
\end{subequations}
such that the closed-loop system
\begin{eqnarray}
\label{five}
&\displaystyle  D(q) \ddot q + C(q,\dot q)\dot q + g(q) = u(t,q_c,q) & \\
\nonumber 
&\dot q_c = f(t,q_c,q) &
\end{eqnarray}
has a unique equilibrium at 
\begin{eqnarray*}
&\displaystyle  (\tilde q,\dtq,q_c-q_c^*) = (0,0,0), &\\
&\displaystyle \tilde q : = q-q_d(t), \quad \dot {\tilde q} := \dot q - \dot q_d(t)&
\end{eqnarray*}
where  $q_c^*$ is a solution to \rref{bla} with $q\equiv q_d$ and this equilibrium is {\em uniformly globally asymptotically stable}. 
\end{definition}

\noindent Our first theorem improves the main result in \cite{ACC13-globtrack} which solves the long-standing open problem defined above. 

\begin{theorem}\label{thm:main1}
  Consider the system  \rref{10} under Assumption \ref{ass1}. Let $a$, $b$, $k_p$ and $k_d$ be positive constants satisfying 
      \begin{eqnarray}
\label{11}   \frac{k_d}{2}\frac{b}{a} > k_ck_\delta
  \end{eqnarray}
and consider the dynamic position-feedback controller
\begin{subequations}\label{90}
\begin{eqnarray}
\label{90a}\dot q_c &=& -a(q_c + b\tilde q) \\
\label{90b}\vartheta & = & q_c + b\tilde q\\
\label{90c}u & = & -k_p \tilde q - k_d \vartheta + D(q)\ddot q_d + C(q,\dot q_d)\dot q_d + g(q). \qquad \
\end{eqnarray}
\end{subequations}
Then, the origin $\{z=0\}$ with $z:= [ \tilde q^\top\, \dot {\tilde q}^\top \, q_c^\top ]^\top$ is uniformly globally asymptotically stable.
\end{theorem}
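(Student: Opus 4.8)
The plan is to recast the theorem as a stability problem for the closed-loop error dynamics and then to combine an energy-based Lyapunov argument for uniform global stability with the integral criterion of Theorem~\ref{thm:uga} to obtain attractivity. First I would write the closed loop in the coordinates $(\tq,\dtq,\vartheta)$, with $\vartheta=q_c+b\tq$. Substituting \rref{90c} into \rref{10}, the feedforward terms $D(q)\ddqd$, $C(q,\dqd)\dqd$ and $g(q)$ cancel, and using $\dot q=\dtq+\dqd$ together with the identity $C(x,y)z=C(x,z)y$ of Assumption~\ref{ass1}(2) one obtains
\[ D(q)\ddot{\tq}+C(q,\dot q)\dtq+C(q,\dqd)\dtq+k_p\tq+k_d\vartheta=0, \]
while \rref{90a}--\rref{90b} give the linear filter $\dot\vartheta=-a\vartheta+b\dtq$. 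I would then check that the equilibrium is unique: annihilating the right-hand sides forces $\vartheta=0$, hence $\dtq=0$, then $\tq=0$ and finally $q_c=0$, so that $\{z=0\}$ is indeed the only equilibrium.

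Second, for uniform global stability I would use the natural energy-like function $V=\tfrac12\dtq^\top D(q)\dtq+\tfrac12 k_p\norm{\tq}^2+\tfrac{k_d}{2b}\norm{\vartheta}^2$, which by Assumption~\ref{ass1}(1) is positive definite, radially unbounded and decrescent. Differentiating along the closed loop and invoking the skew-symmetry of $\dot{\overparen{D(q)}}-2C(q,\dot q)$, the velocity-quadratic Coriolis contribution collapses, $-k_p\dtq^\top\tq$ cancels against the derivative of the position term, and the filter produces $-\tfrac{k_d a}{b}\norm{\vartheta}^2$, leaving
\[ \dot V=-\dtq^\top C(q,\dqd)\dtq-\tfrac{k_d a}{b}\norm{\vartheta}^2. \]
The term $-\dtq^\top C(q,\dqd)\dtq$, bounded in magnitude by $k_ck_\delta\norm{\dtq}^2$ through Assumption~\ref{ass1}(2) and \rref{bndonqd}, is the crux: it is an indefinite velocity anti-damping, whereas the only dissipation appearing in $\dot V$ acts on $\vartheta$. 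The mechanism I would exploit is that the filter makes $\vartheta$ an approximate scaled derivative of $\tq$, so the feedback $-k_d\vartheta$ injects an effective velocity damping of gain $k_d b/a$; condition \rref{11}, namely $\tfrac{k_d}{2}\tfrac{b}{a}>k_ck_\delta$, is precisely what makes \emph{half} of this effective damping dominate the worst-case anti-damping $k_ck_\delta$. To render this rigorous I would augment $V$ with a small cross term of the form $-\mu\,\dtq^\top D(q)\vartheta$, chosen to preserve positive definiteness; its derivative contributes a genuine velocity-damping term $-\mu b\,\dtq^\top D(q)\dtq$, so that under \rref{11} the quadratic part of $\dot V$ becomes sign-definite. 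This yields uniform global boundedness via Theorem~\ref{thm:ugb} and, with uniform stability of the origin, uniform global stability in the sense of Definition~\ref{def:stability}.

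Third, with uniform global stability established I would invoke Theorem~\ref{thm:uga}: taking $\gamma$ a positive-definite function dominated by $-\dot V$ (essentially $\norm{\vartheta}^2$ and, through the cross term, $\norm{\dtq}^2$), the dissipation inequality integrates to a uniform bound of the form \rref{eq:int-lem}; the filter relation $\dot\vartheta=-a\vartheta+b\dtq$ together with the closed-loop equation then propagates integrability from $\vartheta$ to $\dtq$ and thence to $\tq$. Theorem~\ref{thm:uga} upgrades uniform global stability to uniform global asymptotic stability of $\{z=0\}$.

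Finally, the step I expect to be the main obstacle is exactly the one in which the cross term is differentiated: the products $\dtq^\top C(q,\dot q)\vartheta$ and $\dtq^\top\dot{\overparen{D(q)}}\vartheta$ are cubic in the unmeasured velocity and are \emph{not} dominated by the available quadratic dissipation, so no strict global Lyapunov function of this form can exist. This is the concrete manifestation, in the present setting, of the non-global-Lipschitzness of the velocity nonlinearities emphasized in the abstract, and it is the reason for routing the final attractivity argument through the integral criterion of Theorem~\ref{thm:uga} rather than through a strict Lyapunov function, while still retaining a genuinely global conclusion.
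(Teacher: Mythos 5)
There is a genuine gap, and you in fact name it yourself in your last paragraph without realizing that it destroys your second step. Your proof of uniform global boundedness rests on augmenting $V$ with the cross term $-\mu\,\dtq^\top D(q)\vartheta$ and claiming that, under \rref{11}, ``the quadratic part of $\dot V$ becomes sign-definite,'' so that Theorem~\ref{thm:ugb} applies. But the derivative of that cross term contains $\mu\,\dtq^\top\dot{\overparen{D(q)}}\,\vartheta$ and $\mu\,\vartheta^\top C(q,\dot q)\dtq$, which by Assumption~\ref{ass1} grow like $k_c\norm{\dtq}^2\norm{\vartheta}$ --- cubic in the unmeasured velocity --- and are not dominated by any of the available quadratic dissipation. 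This is exactly the obstruction you concede at the end (``no strict global Lyapunov function of this form can exist''), so the proposal is internally inconsistent: the cross-term route yields only \emph{semiglobal} bounds (this is precisely the result of \cite{al:LORORT}), not uniform global boundedness. The inconsistency is fatal because Theorem~\ref{thm:uga} \emph{presupposes} uniform global stability; without UGB you cannot invoke it, so routing attractivity through the integral criterion does not rescue the argument. Two further problems: $\dot V_1 \leq k_ck_\delta\norm{\dtq}^2$ is indefinite, so the energy function alone does not even give uniform stability; and your ``propagate integrability from $\vartheta$ to $\dtq$ and thence to $\tq$'' step is unsubstantiated --- square-integrability of the output of the low-pass filter $\dot\vartheta=-a\vartheta+b\dtq$ does not imply square-integrability of its input. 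Note also that your mechanism would naturally produce a tuning condition on $\mu$, not recover \rref{11}.

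The paper's proof supplies the missing idea through a trajectory-based estimate, not a strict Lyapunov function. After establishing uniform forward completeness (Lemma~\ref{prop:zero}, via $V_1$ and the comparison principle), it invokes Lemma~\ref{lem:kalsi} (a high-gain estimate in the spirit of \cite{Kalsi}) to obtain a \emph{uniform} bound $\norm{\dot\vartheta(t)}\leq\Delta^*$ along trajectories, whence $b\norm{\dtq(t)}\leq\Delta^*+a\norm{\vartheta(t)}$. This converts the anti-damping $k_ck_\delta\norm{\dtq(t)}^2$ into $(a^2/b^2)k_ck_\delta\norm{\vartheta(t)}^2$ plus a constant, which the filter dissipation $-(k_da/b)\norm{\vartheta}^2$ dominates exactly under \rref{11} --- this is where the condition enters, not through a cross term. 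Uniform global boundedness is then proved by a case-by-case contradiction argument (Lemma~\ref{prop:one}): unbounded $\vartheta$ contradicts $\dot v_1\leq 0$; unbounded $\dtq$ forces unbounded $\vartheta$ via the same estimate; and unbounded $\tq$ is handled with the cross-term function $V_2=(\ep_1\tq-\ep_2\vartheta)^\top D(q)\dtq$ used \emph{only} on sets where $\dtq$ and $\vartheta$ are already bounded, so the cubic terms reduce to first-order polynomials in $\norm{\tq}$ and are harmless. Finally, attractivity (Lemma~\ref{prop:two}) works on each ball $B_R$ furnished by UGB, with $\ep_2=\mathcal O(1/R^2)$ and $\ep_1=\mathcal O(1/R^4)$, yielding $\dot v(t)\leq-c\norm{x(t)}^2+\tilde\upsilon$ with $\tilde\upsilon$ arbitrary (it scales with the Kalsi constant $\Delta^{*2}$), and Theorem~\ref{thm:uga} then gives the full-state conclusion directly --- no propagation of integrability between coordinates is needed. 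To repair your proposal you would need to replace the global cross-term argument with Lemma~\ref{lem:kalsi} and the contradiction scheme, i.e., essentially to reconstruct the paper's proof.
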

\begin{remark}
In the statement of Theorem \ref{thm:main1} we use scalar gains $a$, $b$, $k_p$ and $k_d$ purely for clarity of exposition. The result holds if the gains are defined as diagonal positive matrices, and replacing condition \rref{11} with
\[
\frac{k_{d_m}}{2}\frac{b_m}{a_M} > k_ck_\delta
\]
where $(\cdot)_m$ and $(\cdot)_M$ denote respectively, to the smallest and largest elements in the diagonal of $(\cdot)$.
\end{remark}

The controller \rref{90} is based on its ``set-point controller'' counter-part, first published in \cite{KEL1} and subsequently used by many other authors. In \cite{ACC13-globtrack} it is proved that {\em there exists} a finite value of $a$ such that the origin is uniformly globally asymptotically stable. This is also claimed in \cite{BURNAN} in which the author relies on singular perturbation theory to actually establish that the domain of attraction may be extended to $\mR^{3n}$ provided that $a\to\infty$ (hence, establishing semiglobal rather than global asymptotic stability). Reminiscent of that in \cite{BURNAN}, the controller \rref{90} corresponds {\em verbatim} to that from \cite{al:LORORT} where {\em semi}~\!global uniform asymptotic stability is also established under much more stringent conditions --than \rref{11}--  on the control gains. 

Even though the controller \rref{90} is not original, the extent of Theorem \ref{thm:main1} may hardly be overestimated:
\begin{itemize}
\item it establishes uniform global asymptotic stability of the origin of the closed-loop, thereby solving a problem open for about 25 years;
\item the control input $u(t,\tq,\dtq,\vartheta)$ is defined by a globally-Lipschitz map uniformly bounded in $t$ and $\tilde q$ and  the controller dynamics is linear;
\item the sole condition on the control gains is \rref{11} which is in great contrast to the conditions in \cite{al:LORORT} and other articles where, at best, only uniform semiglobal asymptotic stability is established;
\item it establishes that for Lagrangian systems, the fact of introducing damping through a low-pass filter, does not alter the global property of its state-feedback counter-part. 
\end{itemize}

From a practical viewpoint the controller \rref{90} is fairly easy to implement and the sole tuning rule is both simple and practically meaningful. As a matter of fact, \rref{90} is reminiscent of the most elementary control strategies and employs a widely-used {\em ad hoc} alternative to a differentiator; Eqs. \rref{90a}, \rref{90b} correspond to the state-space representation of the so-called dirty-derivatives filter 
\begin{equation}\label{filter}
  \vartheta = \frac{b}{a+s} \dtq
\end{equation}
whose output is commonly used in control practice to replace the unavailable velocities $\dtq$. The transfer function \rref{filter} corresponds to that of a low-pass filter with DC gain equal to $b/a$ and the cutting frequency determined by $a$. 

Note that the {\em  only} condition, inequality \rref{11}, imposes a very natural constraint on the damping gain since it is assumed that the system is naturally lossless (it has no natural dissipative forces such as friction). Such damping may be introduced {\em directly} through the control law, by choosing $k_d$ large relative to the Lipschitz constant on the Coriolis forces and to the upper-bound on the reference trajectories and their derivatives. In such case, the positive values of $a$ and $b$ only affect the performance but do not condition the stability properties. Otherwise, if for instance the control inputs are constrained, one may choose to keep the gains $k_p$ and $k_d$ relatively small and introduce the necessary damping {\em indirectly} through the filter gain, by choosing the ratio $b/a$ large enough to satisfy \rref{11}. Certainly, the farthest the pole of the low-pass filter \rref{filter} is placed away from the origin, the better performance is to be expected. Indeed, typically one chooses $b=a\gg 1$ for unitary gain and to place the cutting frequency high enough to attenuate little the higher harmonics of $\dtq(t)$.  For a few simulation tests see \cite{al:LORORT} and \cite{al:thesis}.

From a pure perspective of dynamical systems Theorem \ref{thm:main1} establishes for \rref{10}, the strongest property desirable for a nonlinear time-varying system: uniform global asymptotic stability\footnote{Note that exponential and finite-time stability are particular cases of the latter.}. Achieving such property for nonlinear time-varying systems with non-globally Lipschitz nonlinearities of the unmeasured variables is at the edge of the achievable via dynamic output feedback --see \cite{MAZPRADAY} and Section \ref{sec:disc}. Moreover, globality is achieved via a {\em linear} autonomous controller with a globally-Lipschitz control input $u$.

\subsection{Proof of Theorem \ref{thm:main1}} 

The closed-loop equation is obtained by replacing $u$ from \rref{90c} in \rref{10} and adding $-C(q,\dqd)\dot q + C(q,\dot q)\dot q_d = 0$ to the right-hand side of \rref{90c} hence, 
\begin{equation}\label{100:b}
  D(q)\ddot {\tilde q} + [\,C(q,\dot q) + C(q,\dot q_d)]\dot {\tilde q} + k_p \tilde q + k_d \vartheta  = 0
\end{equation}
and, for the purpose of analysis, we differentiate \rref{90b} and use \rref{90a} to obtain 
\begin{equation}
  \label{100:a}  \dot \vartheta = -a \vartheta + b\dot{\tilde q}.
\end{equation}
The point $\{x = 0\}$ where  $x: = [\tq^\top \, \dtq^\top \, \vartheta^\top]^\top$ is an equilibrium of \rref{100:b}, \rref{100:a} and is unique. Then, a direct computation shows that $\{z=0\}$ is a unique equilibrium of the closed-loop equations  \rref{90a}, \rref{100:b}. Also, $\{x=0\}$ is uniformly globally asymptotically stable for  \rref{100:b}, \rref{100:a} if and only if so is $\{z=0\}$ for the closed-loop equations \rref{90a}, \rref{100:b}. We proceed to analyze the stability of $\{x = 0\}$.

The analysis of \rref{100:b}, \rref{100:a} is divided in four ordered steps which establish uniform forward completeness --see Lemma \ref{prop:zero} below, and the three properties listed in Definition \ref{def:ugas}. Lemma \ref{prop:one}, farther below, establishes uniform global boundedness. For completeness, uniform stability is contained in the subsequent Lemma \ref{prop:two} however, this property  follows trivially via Lyapunov's first method and it is also implicitly contained in the proof of the main result in \cite{al:LORORT}. Finally, the main statement of Lemma \ref{prop:two} establishes uniform global attractivity.

\begin{lemma}\label{prop:zero}
Under the conditions of Theorem \ref{thm:main1} the closed-loop system \rref{10}, \rref{90} is uniformly forward complete; moreover, there exist $c_1$, $c_2>0$ such that 
  \begin{equation}
    \label{eq:110}
\norm{x(t)} \leq c_1 \norm{x(t_\circ)} \mbox{e}^{c_2(t-t_\circ)}.
  \end{equation}
\end{lemma}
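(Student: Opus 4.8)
The plan is to exhibit a weighted energy function whose growth along trajectories is at most exponential, and then invoke a comparison argument. Because the reference $\qd$ and its derivatives are bounded by $k_\delta$ and the data $D$, $C$, $g$ are ${\cal C}^1$, the vector field defining \rref{100:b}, \rref{100:a} is locally Lipschitz, so solutions exist locally; it then suffices to produce an a priori bound that precludes finite escape time and is uniform in $t_\circ$.

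First I would introduce the function
\[
V = \tfrac12\dtq^\top D(q)\dtq + \tfrac{k_p}{2}\norm{\tq}^2 + \tfrac{k_d}{2b}\norm{\vartheta}^2,
\]
where the weight $k_d/b$ on the filter state is chosen precisely so that the cross terms between $\vartheta$ and $\dtq$ cancel. By Assumption \ref{ass1}.1, $V$ is sandwiched between quadratic forms, $\lambda_m\norm{x}^2 \le V \le \lambda_M\norm{x}^2$, with $\lambda_m := \tfrac12\min\{d_m,k_p,k_d/b\}$ and $\lambda_M := \tfrac12\max\{d_M,k_p,k_d/b\}$. Differentiating $V$ along \rref{100:b}, \rref{100:a} and substituting $D\ddot{\tq}$ from \rref{100:b}, the quantity $\tfrac12\dtq^\top\dot{\overparen{D(q)}}\dtq - \dtq^\top C(q,\dot q)\dtq$ vanishes by the skew-symmetry of Assumption \ref{ass1}.3, the $\pm k_p\tq^\top\dtq$ terms cancel, and the choice of weight makes the $-k_d\dtq^\top\vartheta$ term cancel against the $b\dtq$ contribution coming from \rref{100:a}. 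What survives is
\[
\dot V = -\dtq^\top C(q,\dqd)\dtq - \tfrac{a k_d}{b}\norm{\vartheta}^2 \le k_ck_\delta\norm{\dtq}^2,
\]
where the inequality uses Assumption \ref{ass1}.2 together with $\norm{\dqd}\le k_\delta$, and the dissipative $\vartheta$-term is simply discarded.

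Finally, since $\norm{\dtq}^2 \le \tfrac{2}{d_m}V$, this yields the linear differential inequality $\dot V \le c\,V$ with $c := 2k_ck_\delta/d_m$, a constant independent of $t_\circ$. The comparison lemma then gives $V(t)\le V(t_\circ)\mathrm{e}^{c(t-t_\circ)}$, and passing through the quadratic bounds on $V$ produces \rref{eq:110} with $c_1 = \sqrt{\lambda_M/\lambda_m}$ and $c_2 = c/2 = k_ck_\delta/d_m$. Since this bound is finite on every compact time interval, no solution escapes in finite time, so the system is forward complete, and uniformity in $t_\circ$ is immediate as $c_1$, $c_2$ do not depend on it. I do not expect a genuine obstacle here: the only points requiring care are the filter-state weighting that produces the exact cancellation of the indefinite cross terms, and the observation that the surviving indefinite term is controlled solely by the bounded reference velocity, so that, notably, the gain condition \rref{11} plays no role in establishing forward completeness.
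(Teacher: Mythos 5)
Your proposal is correct and follows essentially the same route as the paper's proof: the identical weighted energy function with the $k_d/b$ weighting on $\vartheta$, the same cancellations via skew-symmetry of $\dot{\overparen{D(q)}}-2C(q,\dot q)$ and the filter cross term, leaving only $-\sfrac{ak_d}{b}\norm{\vartheta}^2$ plus the indefinite term bounded by $k_ck_\delta\norm{\dtq}^2$, followed by the same comparison-lemma argument. The only (harmless) deviations are that you bound $\norm{\dtq}^2\leq \sfrac{2}{d_m}V$ using just the kinetic part, yielding a slightly sharper growth constant than the paper's $c_2=\sfrac{k_ck_\delta}{2\alpha_2}$, and your explicit remark that condition \rref{11} plays no role here is consistent with the paper.
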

\begin{proof}
Consider the Lyapunov function candidate $V:\mRp\times\mR^{3n}\mapsto\mRp$ defined as 
\begin{equation}
  \label{120}
V_1(t,\tq,\dtq,\vartheta) = \frac{1}{2}\left(
\dtq^\top D(\tq+\qd(t)) \dtq + k_p \norm{\tq}^2 + \frac{k_d}{b} \norm{\vartheta}^2 \right) 
\end{equation}
which, under Assumption \ref{ass1}, satisfies $\alpha_1\norm{x}^2 \geq V_1(t,\tq,\dtq,\vartheta)\geq \alpha_2\norm{x}^2$ with 
\[
\alpha_1:= \sfrac{1}{2}\max\{ d_M,\, k_p,\, \sfrac{k_d}{b}\} \quad \alpha_2:= \sfrac{1}{2}\min\{ d_m,\, k_p,\, \sfrac{k_d}{b}\}. 
\]
Furthermore, using $\dot{\overparen{D(q)}}=C(q,\dot q)+C(q,\dot q)^\top$, we see that the total derivative of $V_1$ along the closed-loop trajectories of \rref{100:b}, \rref{100:a}, satisfies
\begin{eqnarray}
\nonumber \dot V_1 &=& -\frac{k_da}{b}\norm{\vartheta}^2 + \dtq^\top C(q,\dot q_d) \dtq\\
\label{140:b} &\leq &  -\frac{k_da}{b}\norm{\vartheta}^2 + k_ck_\delta\norm{\dtq}^2.
\end{eqnarray}
Therefore, $\dot V_1 \leq k_ck_\delta\norm{x}^2$ and, defining $v_1(t):= V_1(t,\tq(t),\dtq(t),\vartheta(t))$, we obtain $\dot v_1(t) \leq (k_ck_\delta/\alpha_2) v_1(t)$. The statement follows integrating the latter, defining
\[
c_1:= \sqrt{\sfrac{\alpha_1}{\alpha_2}}, \quad c_2 := \sfrac{k_ck_\delta}{2\alpha_2}
\]
 and invoking the comparison principle. 
\end{proof}

Next, we introduce a statement which follows along the lines of \cite[Theorem 2]{Kalsi} and is used in the proof of uniform global boundedness --see Lemma \ref{prop:one}.
\begin{lemma}\label{lem:kalsi}
Consider the differential equation 
\begin{equation*}
 \dot \vartheta = -a\vartheta + \nu(t), \quad \nu:\mR_{[t_\circ,\tmax)}\to \mR^n, \ t_\circ \in \mRp, \ \tmax\in [t_\circ,\infty]
\end{equation*}
where $\nu$ is uniformly continuous and bounded. Let $\tau \in (t_\circ,\tmax)$ and $\epsilon := 1/a$; if $\nu$ is uniformly continuous then 
\begin{equation}
  \label{92} 
\displaystyle \lim_{\epsilon \to 0} \dot \vartheta(t) = 0 
\end{equation}
 uniformly for all $t\in[\tau,\tmax)$.
\end{lemma}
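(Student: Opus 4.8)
The plan is to work from the explicit variation-of-constants representation of the solution and to exhibit $\dot\vartheta$ as a boundary-layer part that decays because $\tau>t_\circ$, plus a convolution part that is small because $\nu$ is uniformly continuous. Writing the solution as $\vartheta(t)=e^{-a(t-t_\circ)}\vartheta(t_\circ)+\int_{t_\circ}^t e^{-a(t-s)}\nu(s)\,ds$ and using the elementary identity $a\int_{t_\circ}^t e^{-a(t-s)}\,ds = 1-e^{-a(t-t_\circ)}$ to rewrite $\nu(t)=e^{-a(t-t_\circ)}\nu(t)+a\int_{t_\circ}^t e^{-a(t-s)}\nu(t)\,ds$, a direct computation from $\dot\vartheta=-a\vartheta+\nu$ yields the decomposition
\[
\dot\vartheta(t)=-a\,e^{-a(t-t_\circ)}\vartheta(t_\circ)+e^{-a(t-t_\circ)}\nu(t)+a\int_{t_\circ}^t e^{-a(t-s)}\big[\nu(t)-\nu(s)\big]\,ds.
\]
The first two terms form the boundary layer; the last is the one that genuinely requires uniform continuity.

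For the boundary-layer terms I would use that $t\geq\tau>t_\circ$ furnishes the uniform gap $t-t_\circ\geq\tau-t_\circ=:d_\circ>0$. With $M:=\sup_{s}\norm{\nu(s)}<\infty$ and the initial datum $\vartheta(t_\circ)$ held fixed as $a\to\infty$, the first term is bounded by $a\,e^{-a d_\circ}\norm{\vartheta(t_\circ)}$ and the second by $M\,e^{-a d_\circ}$, both uniformly in $t\in[\tau,\tmax)$; since $a\,e^{-a d_\circ}\to 0$ and $e^{-a d_\circ}\to 0$ as $a\to\infty$ (that is, as $\epsilon\to 0$), these vanish in the limit. This is precisely where the hypothesis $\tau>t_\circ$ is indispensable: at $t=t_\circ$ the factor $a\,e^{-a(t-t_\circ)}$ equals $a$ and blows up, so no uniform smallness of $\dot\vartheta$ can hold across the initial time. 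The statement is thus a genuinely quasi-steady-state (boundary-layer) result.

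The core of the argument, and the main obstacle, is the convolution term. The point to exploit is that $s\mapsto a\,e^{-a(t-s)}\mathbf{1}_{[t_\circ,t]}(s)$ is an approximate identity: it is nonnegative, has mass $1-e^{-a(t-t_\circ)}\le 1$, and concentrates at $s=t$ as $a\to\infty$. Given $\delta>0$, uniform continuity of $\nu$ supplies $\eta>0$ with $\norm{\nu(t)-\nu(s)}<\delta$ whenever $|t-s|<\eta$. I would split the integral at $s=t-\eta$: on the near interval $[t-\eta,t]$ the kernel mass is at most $1$, so that contribution is bounded by $\delta$; on the far interval $[t_\circ,t-\eta]$ I bound the increment crudely by $2M$ and the kernel mass by $a\int_{t_\circ}^{t-\eta}e^{-a(t-s)}\,ds=e^{-a\eta}-e^{-a(t-t_\circ)}\le e^{-a\eta}$, giving a contribution at most $2M\,e^{-a\eta}$. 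Hence the convolution term is at most $\delta+2M\,e^{-a\eta}$ uniformly in $t$, which is below $2\delta$ once $a$ is large enough; as $\delta$ is arbitrary this term tends to $0$. Combining the three estimates yields $\dot\vartheta(t)\to0$ uniformly on $[\tau,\tmax)$ as $\epsilon\to0$, as claimed. The only delicate bookkeeping is the case $t-\eta<t_\circ$, in which the far integral is simply absent and the bound by $\delta$ holds a fortiori.
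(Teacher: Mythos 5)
Your proof is correct, and it supplies something the paper itself does not: the text offers no proof of this lemma at all, simply asserting that the statement ``follows along the lines of'' Theorem 2 of Kalsi et al. Your route --- variation of constants together with the identity $a\int_{t_\circ}^{t}e^{-a(t-s)}\,ds=1-e^{-a(t-t_\circ)}$, giving
\[
\dot\vartheta(t)=-a\,e^{-a(t-t_\circ)}\vartheta(t_\circ)+e^{-a(t-t_\circ)}\nu(t)+a\int_{t_\circ}^{t}e^{-a(t-s)}\big[\nu(t)-\nu(s)\big]\,ds,
\]
with the kernel $s\mapsto a\,e^{-a(t-s)}$ treated as an approximate identity --- is the natural high-gain/boundary-layer argument, presumably the same mechanism underlying the cited result. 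All your estimates check out: the split at $s=t-\eta$ with near-part mass at most $1$ and far-part mass at most $e^{-a\eta}$, the degenerate case $t-\eta<t_\circ$, and the use of $d_\circ=\tau-t_\circ>0$ to kill the boundary-layer factor $a\,e^{-ad_\circ}$; your observation that uniformity must fail across $t=t_\circ$ when $\vartheta(t_\circ)\neq0$ is also right, since $\dot\vartheta(t_\circ)=-a\vartheta(t_\circ)+\nu(t_\circ)$ grows with $a$. What the explicit computation buys over the citation is transparency about constants: your threshold on $a$ for a prescribed tolerance depends on $M=\sup_s\norm{\nu(s)}$, on the modulus of continuity $\eta(\delta)$ of $\nu$, on $d_\circ$, and on $\norm{\vartheta(t_\circ)}$. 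One caveat worth recording: when the paper later invokes this lemma (proof of Lemma \ref{prop:one}) it claims that $a^*$ depends only on $\Delta$ and that the rate of convergence in \rref{92} ``is independent of the bound on $\nu(t)$''; that stronger uniformity is \emph{not} delivered by your (correct) bound $\delta+2Me^{-a\eta}+\big(a\norm{\vartheta(t_\circ)}+M\big)e^{-ad_\circ}$, nor is it asserted by the lemma as stated, so your proof has the additional merit of making precise exactly which uniformities the lemma does and does not provide.
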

\begin{lemma}\label{prop:one}
Under the conditions of Theorem \ref{thm:main1}, the closed-loop trajectories of the system \rref{10}, \rref{90} are uniformly globally bounded.
\end{lemma}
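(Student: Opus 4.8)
The goal is to verify the hypotheses of Theorem \ref{thm:ugb} for a suitable auxiliary function built on $V_1$ from \rref{120}. From the proof of Lemma \ref{prop:zero} we already have the sign-indefinite estimate \rref{140:b}, namely $\dot V_1 \le -\sfrac{k_da}{b}\norm{\vartheta}^2 + k_ck_\delta\norm{\dtq}^2$. The obstruction to concluding boundedness directly is transparent: $\dot V_1$ carries a \emph{positive} term $k_ck_\delta\norm{\dtq}^2$, produced by the Coriolis forces evaluated along the reference velocity, with no matching negative term in $\dtq$. This is precisely the manifestation of the non-globally-Lipschitz nonlinearity in the unmeasured velocity that makes the problem hard; note in particular that $\dot V_1$ is not negative on the complement of a ball in $\mR^{3n}$, since $\norm{\dtq}$ may be large while $\norm{\vartheta}$ is small, so Theorem \ref{thm:ugb} cannot be applied to $V_1$ as it stands.

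The plan is to trade the velocity for the filter variables through the identity obtained from \rref{100:a}, $b\dtq = \dot\vartheta + a\vartheta$. Substituting the Young-type bound $\norm{\dtq}^2 \le (1+\mu^{-1})b^{-2}\norm{\dot\vartheta}^2 + (1+\mu)a^2b^{-2}\norm{\vartheta}^2$ (with $\mu>0$) into \rref{140:b} yields $\dot V_1 \le -\sfrac{a}{b}\big[k_d - (1+\mu)k_ck_\delta\sfrac{a}{b}\big]\norm{\vartheta}^2 + (1+\mu^{-1})\sfrac{k_ck_\delta}{b^2}\norm{\dot\vartheta}^2$. Here the sole gain condition \rref{11} enters in an essential way: its factor $\sfrac{1}{2}$ provides exactly the margin needed so that, for any $\mu<1$, the bracket is strictly positive and the first term is a genuine negative term $-2\rho\norm{\vartheta}^2$ with $\rho>0$. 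What remains is therefore to dominate the residual $(1+\mu^{-1})k_ck_\delta b^{-2}\norm{\dot\vartheta}^2$.

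Controlling $\dot\vartheta$ is the crux, and it is here that Lemma \ref{lem:kalsi} together with Lemma \ref{prop:zero} come in. First, uniform forward completeness \rref{eq:110} rules out finite escape, so that the filter input $\nu := b\dtq$ is defined on all of $[t_\circ,\infty)$ and continuous; reading the $\vartheta$-subsystem \rref{100:a} as the stable filter of Lemma \ref{lem:kalsi} driven by $\nu$, that lemma furnishes the estimate of $\dot\vartheta$ needed to absorb the residual term, after which $\dot V \le 0$ holds outside a ball and Theorem \ref{thm:ugb} delivers the claim. I expect the delicate point to be exactly the coupling underlying the hypotheses of Lemma \ref{lem:kalsi}: the uniform continuity and boundedness of $\nu=b\dtq$ presuppose control of the very velocity we are trying to bound, so the argument must break this circular dependence, for instance by augmenting $V_1$ with a cross term such as $\vartheta^\top D(q)\dtq$ to recover a bona-fide negative $\norm{\dtq}^2$ — at the price of a \emph{cubic} Coriolis contribution $\vartheta^\top C(q,\dot q)\dtq$ that no quadratic dominates and that must itself be controlled through the a-priori bound on $\vartheta$. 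Managing this cubic term, rather than the algebra leading to \rref{140:b}, is where the real work lies.
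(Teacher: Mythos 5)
You correctly set up the first half of the argument exactly as the paper does — the obstruction in \rref{140:b}, the trade $b\dtq=\dot\vartheta+a\vartheta$ from \rref{100:a}, and Lemma \ref{lem:kalsi} to control $\dot\vartheta$ (the paper's \rref{92.5}--\rref{93}) — but the pivotal claim that after absorbing the residual ``$\dot V\le 0$ holds outside a ball and Theorem \ref{thm:ugb} delivers the claim'' is false as stated, and the gap it papers over is precisely where the proof lives. After your substitution the only negative term is $-2\rho\norm{\vartheta}^2$ plus a constant; since $\norm{x}\geq c$ does not force $\norm{\vartheta}$ to be large (take $\vartheta=\dtq=0$ and $\norm{\tq}$ huge), the hypothesis of Theorem \ref{thm:ugb} fails in the $\tq$-direction. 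Your proposed fix, the cross term $\vartheta^\top D(q)\dtq$, does buy a negative $\norm{\dtq}^2$ via $\dot\vartheta=-a\vartheta+b\dtq$, but it also generates an indefinite term of the form $\ep_2 k_p\,\vartheta^\top\tq$, linear in $\norm{\tq}$, which no quadratic in $(\vartheta,\dtq)$ dominates; this is why the paper's $V_2$ in \rref{94} is $(\ep_1\tq-\ep_2\vartheta)^\top D(q)\dtq$ — the $\ep_1\tq^\top D\dtq$ component hits $-k_p\tq$ in \rref{100:b} and produces the indispensable $-\ep_1 k_p\norm{\tq}^2$. The cubic Coriolis term you flag (now $\sim\ep_1 k_c\norm{\tq}\norm{\dtq}^2$) is then tamed not by ``the a-priori bound on $\vartheta$'' but by a \emph{case analysis}: the paper argues by contradiction, first ruling out unbounded $\vartheta$ (via \rref{93}, \rref{93.2}), hence unbounded $\dtq$ (finite DC gain, \rref{92.5}), and only in the residual case — $\norm{\tq(t)}\to\infty$ with $\dtq,\vartheta$ confined to the set $\Omega$ of radius $R$ — does it deploy $V_2$; on $\Omega$ the cubic degenerates to first order in $\norm{\tq}$, yielding \rref{95.5}. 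Your plan has no such case structure and aims instead at a single global differential inequality, which the cubic term forecloses: no auxiliary function of this quadratic type satisfies the hypotheses of Theorem \ref{thm:ugb} on all of $\mR^{3n}$.

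Second, you misdiagnose how the circularity in invoking Lemma \ref{lem:kalsi} is broken. It is not the cross term that does it. The paper applies the lemma on finite intervals $[t_\circ,\tmax)$, where the exponential estimate \rref{eq:110} of Lemma \ref{prop:zero} already bounds $x$ and $\dot x$ by some $M(\tmax,r)$, making $\nu=b\dtq$ bounded and uniformly continuous there with no circularity; the decisive point, which the paper emphasizes, is that $a^*$ in \rref{320} depends \emph{only} on $\Delta$ and not on $M$ (hence not on $r$ or $\tmax$), so the intervals can be extended inside the \emph{reductio ad absurdum}. Moreover, since $a$ is fixed by the theorem (only \rref{11} is assumed, not largeness of $a$), one cannot take $\Delta$ small; the paper instead inflates $\Delta^*$ until the fixed $a$ satisfies $a\geq a^*(\Delta^*)$, accepting a large but harmless residual constant. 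Without these two observations your appeal to Lemma \ref{lem:kalsi} is a placeholder. In sum: right opening moves and an accurate map of where the difficulty lies, but the statement is not proved — the unbounded-$\tq$ case and the cubic term, which you explicitly defer, are the content of the paper's proof, and the single-function route you sketch cannot close them.
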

\begin{proof}
We analyze the solutions to \rref{100:b}, \rref{100:a} with initial conditions $t_\circ\geq 0$ and $x(t_\circ)= x_\circ\in B_r$ where $r>0$ is arbitrarily fixed. 

We apply Lemma \ref{lem:kalsi} to Equation \rref{100:a} with $\nu(t) = b\dtq(t)$. To that end, we first observe that \rref{eq:110} implies that for any $r>0$ and  $\tmax>t_\circ$ there exists $M$ such that $\max\{\norm{x(t)}, \norm{\dot x(t)}\} \leq M(\tmax,r)$ for all\footnote{The inequality for $ \norm{\dot x(t)}$ follows from forward completeness and Assumption \ref{ass1}. } $t\in [t_\circ,\tmax)$ and all $x_\circ\in B_r$. Therefore, $\nu$ is bounded and uniformly continuous on  $[t_\circ,\tmax)$. Now, the expression \rref{92} implies that for any $\Delta>0$ there exists $a^*$ such that 
\begin{equation}
  \label{320}
a\geq a^*(\Delta) \Rightarrow \norm{\dot \vartheta(t)} \leq \Delta, \quad \forall\, t\in [\tau,\tmax).
\end{equation}
We emphasize that $a^*$ only depends on $\Delta$ and the latter is independent of $M$ (hence, independent of $r$) since the rate of convergence of $\dot\vartheta$ in \rref{92} is independent of the bound on $\nu(t)$ and is uniform in $t$ --see \cite{Kalsi}. Also, in view of \rref{92} it follows that $\Delta\mapsto a^*$ is strictly decreasing and $a^*\to\infty$ as $\Delta\to 0^+$ therefore, let $\Delta^*$ be large enough so that $\Delta=\Delta^*>0$ generate,  via \rref{320}, $a^*>0$ such that\footnote{This is always possible since otherwise, we can redefine $\Delta^*_{\mbox{\tiny new}} > a^* > \Delta^*$ and \rref{320} continues to hold for $\Delta= \Delta^*_{\mbox{\tiny new}}$}  $\Delta^*/a^* \geq 1$. From \rref{100:a} we see that for any $t\in [\tau,\tmax)$ and $a\geq a^*$,
  \begin{equation}\label{92.5}
    b\norm{\dtq(t)} \leq \Delta^* + a\norm{\vartheta(t)}.
  \end{equation}
Recalling that $v_1(t)= V_1(t,\tq(t),\dtq(t),\vartheta(t))$ we obtain, from \rref{140:b},
\begin{equation}
  \label{93}
\dot v_1(t) \leq -\frac{a}{b}\left({k_d}-\frac{ak_ck_\delta}{b}\right)\norm{\vartheta(t)}^2 + k_ck_\delta\frac{{\Delta^*}^2}{b^2}
\end{equation}
for all $t\in [\tau,\tmax)$ and $a\geq a^*$. In view of \rref{11} the factor of $\norm{\vartheta(t)}^2$ is negative.

Next, we proceed by {\em reductio ad absurdum}. Assume that $\norm{x(t)}\to \infty$ as $t\to\infty$ then, either $\norm{\vartheta(t)}$ grows unboundedly as $t\to\infty$ or it is bounded for all $t$. In the first case, since \rref{93} holds for any $\tmax$ by continuity of solutions and since $\Delta^*$ is independent of $\tmax$, we can (if necessary) extend the interval so that for sufficiently large $t\in [\tau,\tmax)$ we have $\norm{\vartheta(t)}^2\geq (\Delta^*/a^*)^2\geq 1$ consequently, from \rref{93},
\begin{equation}
  \label{93.2}
\dot v_1(t) \leq -\left[ \frac{a}{b}\left({k_d}-\frac{ak_ck_\delta}{b}\right)\sfrac{1}{{a^*}^2} - \frac{k_ck_\delta}{b^2}\right]{\Delta^*}^2
\end{equation}
and the factor of ${\Delta^*}^2$ is positive if \rref{11} holds and
\[
\frac{ak_d}{2ba^{*2}} > \frac{k_ck_\delta}{b^2}
\]
which, since $a\geq a^*$,  also holds due to \rref{11}. Therefore, $\dot v_1(t)\leq 0$ which implies that $v_1(t)$ is bounded. Since $V_1$ is radially unbounded we also obtain that $\norm{x(t)}$ is uniformly bounded. Next, assume that $\norm{\vartheta(t)}$ is uniformly bounded for any $t\geq t_\circ\geq 0$ then,  and either $\norm{\dtq(t)}$ or $\norm{\tq(t)}$ (or both) grow unboundedly. If $\norm{\dtq(t)}$ grows unboundedly it follows, in view of \rref{92.5}, that $\norm{\vartheta(t)}\to \infty$ and  the previous reasoning applies again. Finally, consider the case that  $\norm{x(t)}\to \infty$  due to the unbounded growth of $\norm{\tq(t)}$ and consider the function $V_2:\mRp\times\mR^{3n}\to\mRp$, 
    \begin{equation}
      \label{94}
V_2(t,\tq,\dtq,\vartheta) = (\ep_1 \tq - \ep_2\vartheta )^\top D(\tq+\qd(t))\dtq,\quad  \ep_1,\ \ep_2 < 1 
    \end{equation}
which in view of  \rref{100:b} and \rref{100:a}, satisfies
 \begin{align}
\nonumber \dot V_2 = &(\ep_1\tq - \ep_2\vartheta)^\top \left(\,-k_d\vartheta - k_p \tq -[C(q,\dot q) + C(q,\dqd)]\dtq\,\right)
 \\& + \ep_1 \dtq^\top D(q) \dtq - \ep_2( -a\vartheta + b\dtq)^\top D(q) \dtq 
\nonumber \\& \ 
+ (\ep_1\tq-\ep_2\vartheta)^\top \dot {\overparen{D(q)}} \dtq.\label{94.5}
 \end{align}
Let $R$ be an arbitrary positive number and define 
$$\Omega:=\Big\{x\in\mR^{3n}:\tq\in\mR^n,\, \max\big\{\norm{\dtq},\,\norm{\vartheta}\big\}\leq R\Big\}.$$ Then, 
\begin{subequations}\label{95}
  \begin{eqnarray}
\norm{\ep_1\tq^\top C(q,\dot q)^\top \dtq } & \leq & \ep_1 k_c  \norm{\tq}\norm{\dtq} \big[R+k_\delta\big] 
\\
\norm{\ep_2\vartheta^\top C(q,\dot q)^\top \dtq } & \leq & \ep_2 k_c \norm{\vartheta}\norm{\dtq} \big[R+k_\delta\big] 
\end{eqnarray}
\end{subequations}
--see \rref{bndonqd}, which implies that for all $x\in\Omega$, all the terms of undefined sign on the right-hand side of \rref{94.5} may be upper bounded by a first-order polynomial of $\norm{\tq}$. Therefore, using Assumption \ref{ass1} and \rref{95}, we see that there exist positive numbers $c_1$, $c_2$ such that, defining $v_2(t):= V_2(t,\tq(t),\dtq(t),\vartheta(t))$,
 \begin{equation}
   \label{95.5}
\dot v_2(t) \leq -\ep_1k_p\norm{\tq(t)}^2 + c_1\norm{\tq(t)} + c_2
 \end{equation}
for all $t\geq t_\circ$ and $x(t)\in \Omega$ that is,  $\dot v_2(t)$ becomes negative as $\norm{\tq(t)}\to \infty$. 

Next, define $V:\mRp\times\mR^{3n}\to\mR$, 
\begin{equation}
  \label{110}
V(t,x) := V_1(t, \tilde q, \dot {\tilde q},\vartheta) + V_2(t, \tilde q, \dot {\tilde q},\vartheta)
\end{equation}
which  is positive definite for sufficiently large control gains, independently of the initial conditions. To see this, note that defining,
\begin{eqnarray*}
  M_1 := \begin{bmatrix}
  k_p I & 2\ep_1 D\\
2\ep_1 D^\top & D 
\end{bmatrix}\\
M_2 :=\begin{bmatrix}
  \sfrac{k_d}{b} I & -2\ep_2 D\\
-2\ep_2 D^\top & D 
\end{bmatrix},
\end{eqnarray*}
we have 
\begin{eqnarray*}
&& V(t,x) \geq 
\frac{1}{4}
\begin{bmatrix}
\tq \\ \dtq  
\end{bmatrix}^\top\!\!
M_1
\begin{bmatrix}
\tq \\ \dtq  
\end{bmatrix}
+ 
\frac{1}{4}
\begin{bmatrix}
\vartheta \\ \dtq  
\end{bmatrix}^\top\!\!
M_2
\begin{bmatrix}
\vartheta \\ \dtq  
\end{bmatrix}\,
\end{eqnarray*}
and both $M_1$ and $M_2$ are positive semidefinite respectively if
\begin{eqnarray}\label{115}
  \sfrac{k_p}{4d_M}\geq \ep_1^2,\qquad
  \sfrac{k_d}{4bd_M}\geq \ep_2^2\,.
\end{eqnarray}
For any given positive gains $k_p$, $k_d$ and $b$ and the constant $d_M$ one can always find $\ep_1$, $\ep_2>0$ such that the inequalities in \rref{115} hold. From Assumption \ref{ass1}.1 it is also clear that $V$ is proper since $D$ is bounded and $V$ is decrescent, since
\begin{eqnarray}\label{117}
&& V(t,x) \leq 
\begin{bmatrix}
\tq \\ \dtq  
\end{bmatrix}^\top\!\!
M_1
\begin{bmatrix}
\tq \\ \dtq  
\end{bmatrix}
+ 
\begin{bmatrix}
\vartheta \\ \dtq  
\end{bmatrix}^\top\!\!
M_2
\begin{bmatrix}
\vartheta \\ \dtq  
\end{bmatrix}\,
\end{eqnarray}
and the induced norms of $M_1$ and $M_2$ are uniformly bounded from above.

Using \rref{93} and \rref{95.5} we see that $v(t):=V(t,x(t))$ satisfies 
\[
\dot v(t) \leq -\ep_1k_p\norm{\tq(t)}^2 + c_1'\norm{\tq(t)} + c_2' + c_3'\norm{\vartheta(t)}^2
\]
for all $x(t)\in\Omega$, $t\in [\tau,\tmax)$ and appropriate (innocuous) values of $c_1'$, $c_2'$ and $c_3'$. We conclude that if either $\norm{\tq(t)}$,  $\norm{\dtq(t)}$ or  $\norm{\vartheta(t)}$  grows unboundedly, there exists $t\in [\tau,\tmax)$ (if necessary, replace $\tmax$ with $\tmax_{\mbox{\scriptsize new}} > \tmax$) such that $ \dot v(t) \leq 0$. By continuity of the solutions and uniform forward completeness we may extend $[\tau,\tmax)$ to $[\tau,\infty)$ and conclude that $v(t)$ is uniformly bounded on $[\tau,\infty)$. Since $V$ is proper $\norm{x(t)}$ is also uniformly bounded on $[\tau,\infty)$. Using again uniform forward completeness --see Lemma \ref{prop:zero}, we obtain uniform global boundedness on $[t_\circ,\infty)$.
\end{proof}

\begin{lemma}\label{prop:two}
The origin of the system  \rref{10} under the conditions of Theorem \ref{thm:main1} is uniformly (asymptotically) stable. Furthermore, assume that for each $r>0$ there exists $R(r)$ such that if $x(t_\circ)\in B_r$ then $x(t)\in B_R$ for all $t\geq t_\circ$. Under these conditions, the origin is uniformly globally attractive.
\end{lemma}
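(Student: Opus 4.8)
The plan is to establish the two asserted properties — uniform stability and uniform global attractivity — and to obtain the latter through Theorem \ref{thm:uga}, so that together with the uniform global boundedness of Lemma \ref{prop:one} all three requirements of Definition \ref{def:ugas} are met.

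Uniform stability I would obtain by Lyapunov's first (indirect) method. Since $C(q,\dot q)\dtq$ and the inertia variation are quadratic in the error near $\{x=0\}$, the linearisation of \rref{100:b}, \rref{100:a} about the origin is the linear time-varying system $D(\qd)\ddot{\tq} + 2C(\qd,\dqd)\dtq + k_p\tq + k_d\vartheta = 0$, $\dot\vartheta = -a\vartheta + b\dtq$, whose coefficients are bounded by \rref{bndonqd} and Assumption \ref{ass1}. A quadratic certificate of the form \rref{120} corrected by a small cross term of the type \rref{94} shows this linearisation to be uniformly asymptotically stable — a routine computation, already implicit in \cite{al:LORORT}. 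Local uniform asymptotic stability of the nonlinear error system, hence uniform stability, follows; combined with Lemma \ref{prop:one} and the remark after Definition \ref{def:stability} this yields uniform global stability, the standing hypothesis of Theorem \ref{thm:uga}.

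For uniform global attractivity I would apply Theorem \ref{thm:uga} with $\gamma(x):=\norm{x}^2$. Under the standing boundedness assumption every solution issued from $B_r$ stays in a compact set $B_R$, on which $D$, $C(q,\cdot)$ and $\dot{\overparen{D(q)}}$ are bounded and both $V_1$ of \rref{120} and $V_2$ of \rref{94} are bounded above and below; hence their increments over $[t_\circ,t]$ are bounded uniformly in $t$. Integrating \rref{140:b} gives $\frac{k_d a}{b}\int\norm{\vartheta}^2 \leq V_1(t_\circ) + k_ck_\delta\int\norm{\dtq}^2$, and the filter identity $b\dtq = \dot\vartheta + a\vartheta$ read off \rref{100:a} lets me trade $\int\norm{\dtq}^2$ for $\int\norm{\vartheta}^2$ modulo $\int\norm{\dot\vartheta}^2$; the resulting coefficient of $\int\norm{\vartheta}^2$ is $\frac{2a}{b}\big(\frac{k_d}{2} - \frac{a k_ck_\delta}{b}\big)$, which is strictly positive precisely because of condition \rref{11}. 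In parallel, integrating \rref{94.5} — whose leading quadratic part contributes the negative terms $-\ep_1 k_p\norm{\tq}^2$ and, once $\ep_2 b\,d_m>\ep_1 d_M$, $-(\ep_2 b\,d_m-\ep_1 d_M)\norm{\dtq}^2$, with the indefinite cross terms having bounded coefficients on $B_R$ and split by Young's inequality — controls $\int\norm{\tq}^2$ and $\int\norm{\dtq}^2$. Substituting one estimate into the other closes a system of integral inequalities and produces $\int_{t_\circ}^t \norm{x(\tau)}^2\,d\tau \leq \beta_r$ uniformly in $t$; then for every $\tilde\upsilon>0$ we get $\int_{t_\circ}^t[\gamma(x(\tau))-\tilde\upsilon]\,d\tau \leq \beta_r$, so \rref{eq:int-lem} holds and Theorem \ref{thm:uga} delivers uniform global attractivity — indeed uniform global asymptotic stability.

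The main obstacle is the transient filter term: the identity $b\dtq=\dot\vartheta+a\vartheta$ reintroduces $\int\norm{\dot\vartheta}^2$, and the cross term $\ep_2 a\,\vartheta^\top D(q)\dtq$ in \rref{94.5} carries the large factor $a$, which after Young's inequality feeds a large multiple of $\int\norm{\vartheta}^2$ back through the $V_1$ estimate. Controlling these requires the uniform smallness of $\dot\vartheta$ supplied by Lemma \ref{lem:kalsi} together with condition \rref{11}; showing that \rref{11} alone keeps the net coefficients of $\int\norm{\dtq}^2$ and $\int\norm{\tq}^2$ strictly positive — so that a genuinely finite integral bound, rather than one growing with $t$, is obtained on $B_R$ — is the delicate quantitative heart of the argument. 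Should a residual survive, the slack $\tilde\upsilon$ in Theorem \ref{thm:uga} is exactly what absorbs it while still yielding attractivity.
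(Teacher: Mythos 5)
Your architecture matches the paper's (the pair $V_1$, $V_2$, Lemma \ref{lem:kalsi}, condition \rref{11}, and Theorem \ref{thm:uga}), and your linearisation route to uniform stability is consistent with the paper's own remark that this part ``follows trivially via Lyapunov's first method''. The genuine gap is in the attractivity argument, at exactly the point you flag and then hedge on. Lemma \ref{lem:kalsi} is a high-gain statement: $\dot\vartheta\to 0$ only as $\epsilon=1/a\to 0$. For the \emph{fixed} gains of Theorem \ref{thm:main1} it delivers only a pointwise bound $\norm{\dot\vartheta(t)}\leq \Delta^*$, where $\Delta^*$ must be taken \emph{large} enough that the given $a$ exceeds $a^*(\Delta^*)$ --see the choice around \rref{320}; $\dot\vartheta$ is not ``uniformly small'', and $\int\norm{\dot\vartheta}^2$ grows like $\Delta^{*2}(t-t_\circ)$. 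Hence your claimed conclusion $\int_{t_\circ}^{t}\norm{x(\tau)}^2d\tau\leq\beta_r$ (genuine square integrability) is unobtainable by this route; the best available is $\int_{t_\circ}^{t}\big[c\norm{x(\tau)}^2-\tilde\upsilon_0\big]d\tau\leq\beta_r$ with a residual $\tilde\upsilon_0>0$. Your fallback --- ``the slack $\tilde\upsilon$ in Theorem \ref{thm:uga} absorbs it'' --- then misreads that theorem: hypothesis \rref{eq:int-lem} must hold for \emph{every} $\tilde\upsilon>0$, and a single surviving residual only gives convergence to a fixed ball, not attractivity. This is why the paper works pointwise rather than under the integral sign: it first establishes on $B_R$ the constant-free differential inequality \rref{190}, and only then injects \rref{92.5} to obtain \rref{195}--\rref{eq:33}, so that the \emph{only} residual is $k_ck_\delta\Delta^{*2}/b^2$ --- independent of $r$, and disposed of by the arbitrariness of $\Delta^*$ when Theorem \ref{thm:uga} is invoked.

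The second missing ingredient is the mechanism that keeps the quadratic forms negative under \rref{11} alone, uniformly in $R$ --- what you rightly call the ``delicate quantitative heart'' but do not supply. Your Young-splitting of the $V_2$ cross terms either produces, as in \rref{95.5}, $R$-dependent additive constants whose integrals grow linearly in $t$ at a rate that cannot be shrunk for given $r$ (scaling $\ep_1,\ep_2$ down scales the dissipation by the same factor, so the ratio fixing the limit ball is invariant), or, if kept purely quadratic, requires the $R$-dependent weighting $\ep_2=\mathcal O(R^{-2})$, $\ep_1=\mathcal O(R^{-4})$ of \rref{180}--\rref{220}. That scaling is precisely what renders the three matrices in \rref{179} positive definite --- in particular dominating the $a$-weighted cross term $\ep_2 a\vartheta^\top D(q)\dtq$ you identify as problematic --- without any gain condition beyond \rref{11} and without violating \rref{115}, since $\ep_1,\ep_2$ enter only the Lyapunov function, not the controller. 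Without it, your system of integral inequalities does not close with strictly positive net coefficients for arbitrary $R$, and the argument stalls exactly where the semiglobal literature (\emph{e.g.} \cite{al:LORORT}) stops.
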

\begin{proof} 
Let the control gains be fixed according to \rref{11} and for any $r>0$ let the property of uniform global boundedness generate $R=\gamma(r)+c$\,  ---see Definition \ref{def:ugb}, such that $x(t)\in B_R$ for all $t\geq t_\circ$. Consider a function $V:\mRp\times B_R\to\mR$ defined as in \rref{110}. Under Assumption \ref{ass1} its total time-derivative along the trajectories of \rref{100:b}, \rref{100:a} satisfies,  for all $(t,x)\in\mRp\times B_R$,
  \begin{eqnarray}
\nonumber 
\dot V &\!\!\! \leq &\!\!\! -\frac{\ep_1k_p\norm{\tq}^2}{2}-\frac{\ep_2b d_m\norm{\dtq}^2}{2} + \ep_1d_M\norm{\dtq}^2 
\nonumber \\[1.3mm] &   &
- \frac{1}{2}\vecttwo{\tq}{\dtq}^\top 
\mattwo{\ep_1 k_p / \mbox{\small $2$} }{-\ep_1k_c\left(R+k_\delta\right)}{ \ep_2bd_m / \mbox{\small $2$} }
\vecttwo{\tq}{\dtq}
\nonumber \\[1.3mm] &   &
- \frac{1}{2}\vecttwo{\tq}{\vartheta}^\top 
\mattwo{\ep_1 k_p / \mbox{\small $2$} }{ -(\ep_1k_d+\ep_2k_p) }{k_da/ \mbox{\small $2b$} }
\vecttwo{\tq}{\vartheta}
\nonumber \\[1.3mm] &   &
- \frac{1}{2}\shrinkthis{0.97}{\vecttwo{\dtq}{\vartheta}^{\!\!\top} \hspace{-1.8mm}
\mattwo{\sfrac{\ep_2bd_m}{2} }{-\ep_2 \left(k_c (R+k_ck_\delta)+ad_M\right)}{k_da/ \mbox{\small $2b$} }}\hspace{-1.8mm}
\vecttwo{\dtq}{\vartheta}
\hspace{-3mm}\ \nonumber \\  & & \ 
-\left(\dty\frac{k_da}{2b} - \ep_2k_d\right)\norm{\vartheta}^2 + k_ck_\delta\norm{\dtq}^2
\label{179}
\end{eqnarray}
where ``*'' stands for the opposite element in the matrix with respect to the main diagonal. The second and third terms on the right-hand side of the previous inequality may be grouped together then, note that the factor $\big[(\ep_2b d_m/2)-\ep_1 d_M\big]$ of $\dtq $ is positive for sufficiently small values of $\ep_1/\ep_2$. Also, the first matrix above is positive definite if
\begin{eqnarray*}
  \sfrac{\ep_2}{4\ep_1}bd_m \geq \sfrac{ k_c^2\left(R+k_\delta\right)^2}{k_p} + d_M
\end{eqnarray*}
which holds for control gains independent of the initial conditions and of $R$, if 
\begin{equation}\label{180}
  \sfrac{\ep_2}{\ep_1}=\mathcal O\left(R^2\right)\,
\end{equation}
which also imposes $\ep_1/\ep_2$ to be ``small''. 
The second matrix is positive if
\begin{eqnarray*}
   \sfrac{\ep_1k_pk_da}{4b} \geq  (\ep_1k_d + \ep_2 k_p)^2
\end{eqnarray*}
which holds for sufficiently small values of $\ep_1$ and $\ep_2$. Finally, the third matrix is positive definite if
\begin{eqnarray*}
  \sfrac{k_dad_m}{4} \geq  \ep_2 \Big[(R+k_ck_\delta) k_c+k_\delta k_c + ad_M\Big]^2&
\end{eqnarray*}
which is satisfied for sufficiently small values of 
\begin{equation}\label{200}
  \ep_2 = \mathcal O\left(\sfrac{1}{R^2}\right)
\end{equation}
which in turn, in view of \rref{180}, imposes that 
\begin{equation}\label{220}
    \ep_1 = \mathcal O\left(\sfrac{1}{R^4}\right)\,.
\end{equation}
Furthermore, the factor of $\vartheta^2$ is negative if $\ep_2 < {a/2b}$. Note that none of these definitions violate \rref{115} nor they restrict the gains relatively to the value of $R$. Thus, for all  $(t,x)\in \mRp\times B_R$ and for any $R\geq 0$ there exist $\lambda\in(0,1)$ and $c>0$ such that 
\begin{equation}
  \label{190}
\dot V (t,x) \leq -c\norm{x}^2 -\sfrac{\lambda k_d a}{2b}\norm{\vartheta}^2 + k_ck_\delta\norm{\dtq}^2 \quad \forall\, (t,x)\in \mRp\times B_R
\end{equation}
with control gains independent of $R$. At this point it is clear that uniform (asymptotic) stability of the origin follows from fixing $R$ to be ``small''.

To continue further with the proof of uniform global attractivity, we invoke Lemma \ref{lem:kalsi} and uniform global boundedness to see that \rref{92.5} holds for all $t\geq t_\circ$ therefore, from \rref{190},  we have 
\begin{equation}\label{195}
  \dot v(t) \leq  -c\norm{x(t)}^2  -\Big[\sfrac{\lambda k_da}{2b}-\sfrac{k_ck_\delta a^2}{b^2}\Big]\norm{\vartheta(t)}^2 + \sfrac{k_ck_\delta\Delta^{*2}}{b^2} \quad\forall\, (t,x_\circ)\in \mRp\times B_r.
\end{equation}
where $v(t)= V(t,x(t))$. The factor of $\norm{\vartheta(t)}^2$ is non-positive due to \rref{11}, therefore
\begin{equation}\label{eq:33}
  \dot v(t) \leq  -c\norm{x(t)}^2  + \tilde\upsilon \quad\forall\, (t,x_\circ)\in \mRp\times B_r
\end{equation}
where $\tilde\upsilon := k_ck_\delta\Delta^{*2}/b^2$. 

Next, reconsider \rref{117} and let $m_M$ be an upper-bound on the induced norms of $M_1$ and $M_2$ then, $v(t_\circ)\leq 2m_M\norm{x_\circ}^2$. Integrating on both sides of \rref{eq:33} from $t_\circ$ to $t$ we see that \rref{eq:int-lem} holds for any $t\geq t_\circ$ with $\gamma(s)= c|s|^2$ and $\beta_{{r\tilde\upsilon}} = 2m_M r^2$. The proof is completed by observing that the previous computations hold for arbitrary $r$ and $\Delta^*$, hence any $\tilde\upsilon(\Delta^*) >0$, and invoking Theorem \ref{thm:uga}. 
\end{proof}

It is worth to underline the following statement which follows straightforward from the previous proof. It improves the main result in \cite{al:LORORT} in the sense that the conditions on the control gains are significantly relaxed but it also covers all other semiglobal results.
\begin{corollary}
The origin of the closed-loop  system \rref{10} with \rref{90} under condition \rref{11} is semiglobally exponentially stable. 
\end{corollary}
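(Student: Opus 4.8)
The plan is to leverage the uniform global boundedness already in hand and to upgrade the estimate \rref{190} to a strictly negative definite bound on a suitably augmented Lyapunov function, valid on the ball reached from a prescribed set of initial conditions. Fix $r>0$ arbitrary. By Lemma \ref{prop:one} there exists $R=R(r)$ such that $x(t_\circ)\in B_r$ implies $x(t)\in B_R$ for all $t\geq t_\circ$, so all estimates are carried out on $B_R$. On this set, solving \rref{100:b} for $\ddot{\tilde q}$ and using Assumption \ref{ass1} yields a \emph{state-proportional} bound $\norm{\ddot{\tilde q}}\leq \kappa_R\norm{x}$, with $\kappa_R$ depending on $R$, $k_p$, $k_d$, $k_c$, $k_\delta$ and $d_m$; the crucial feature is that this bound vanishes at the origin, since $\{x=0\}$ is an equilibrium.

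The obstacle is the indefinite term $+k_ck_\delta\norm{\dtq}^2$ left over in \rref{190}: converting it through the filter as in the proof of Lemma \ref{prop:two} reintroduces the constant $\tilde\upsilon$, which precludes an exponential bound. To remove it I would introduce the filter-error coordinate $\eta := \vartheta - (b/a)\dtq$, which is \emph{linear} in $x$ and hence preserves the quadratic bounds of the Lyapunov function. Differentiating and using \rref{100:a}, \rref{100:b} gives $\dot\eta = -a\eta-(b/a)\ddot{\tilde q}$, so that on $B_R$, by Young's inequality and the bound on $\ddot{\tilde q}$, one has $\frac{d}{dt}\tfrac12\norm{\eta}^2\leq -\tfrac{a}{2}\norm{\eta}^2+\tfrac{b^2\kappa_R^2}{2a^3}\norm{x}^2$.

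I would then take the augmented function $W:=V+\mu\norm{\eta}^2$, with $V$ as in \rref{110} and $\mu>0$ to be fixed; since $\eta$ is linear in $x$, $W$ still satisfies $\beta_1\norm{x}^2\leq W\leq \beta_2\norm{x}^2$ on $B_R$ for positive $\beta_1,\beta_2$. Writing $b\dtq=a(\vartheta-\eta)$ one bounds $k_ck_\delta\norm{\dtq}^2\leq k_ck_\delta\tfrac{a^2}{b^2}\big[(1+\delta)\norm{\vartheta}^2+(1+\tfrac1\delta)\norm{\eta}^2\big]$ for an arbitrary $\delta>0$. The $\norm{\vartheta}^2$ contribution is absorbed by the spare term $-\tfrac{\lambda k_d a}{2b}\norm{\vartheta}^2$ of \rref{190} provided $\tfrac{\lambda k_d}{2}\tfrac{b}{a}\geq k_ck_\delta(1+\delta)$, which holds under \rref{11} after choosing the ratio $b/a$ with a small additional margin (admissible in a semiglobal statement); the $\norm{\eta}^2$ contribution is absorbed by the damping $-\mu a\norm{\eta}^2$ upon taking $\mu=\mathcal O(a/b^2)$ large enough. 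Collecting terms leaves $\dot W\leq -\big(c-\mu\tfrac{b^2\kappa_R^2}{a^3}\big)\norm{x}^2$ on $B_R$, and since $\mu=\mathcal O(a/b^2)$ the perturbation is $\mathcal O(\kappa_R^2/a^2)$, hence strictly smaller than $c$ once $a$ is large enough with $b/a$ fixed. This gives $\dot W\leq -c'\norm{x}^2\leq -(c'/\beta_2)W$ on $B_R$, whence $\norm{x(t)}\leq \sqrt{\beta_2/\beta_1}\,\norm{x_\circ}\,e^{-(c'/2\beta_2)(t-t_\circ)}$ for all $x_\circ\in B_r$; as $r$ is arbitrary, semiglobal exponential stability follows.

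The main difficulty, and the reason the statement is only semiglobal, is precisely this last absorption step: the reference acceleration forces the filter error $\eta$ and produces a state-proportional disturbance whose gain grows with $b/a$, so it can be dominated only by speeding up the error dynamics, i.e.\ by enlarging $a$ in proportion to the size $R$ of the operating region. This is exactly the trade-off underlying the classical semiglobal results such as \cite{al:LORORT,BURNAN}, here rendered quantitative through condition \rref{11} and the coordinate $\eta$.
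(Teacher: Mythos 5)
Your proposal is correct in its main mechanism but follows a genuinely different route from the paper's. The paper disposes of this corollary in a single line: it reads the bound \rref{190}, already established on $\mRp\times B_R$ in the proof of Lemma \ref{prop:two}, as directly furnishing $\dot V\leq -c'\norm{x}^2$ with a constant $c'=c-R^2>0$ that degrades with $R$ but keeps the gains fixed, and concludes exponential decay on each ball via the quadratic sandwich \rref{115}--\rref{117}; no new coordinates and no retuning appear. You instead keep \rref{190} intact and remove the obstruction $+k_ck_\delta\norm{\dtq}^2$ by an exact algebraic device: the filter-error coordinate $\eta=\vartheta-(b/a)\dtq$, whose dynamics $\dot\eta=-a\eta-(b/a)\ddot{\tilde q}$ you compute correctly, together with the identity $b\dtq=a(\vartheta-\eta)$. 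This is a genuinely interesting alternative: it replaces the trajectory-based Lemma \ref{lem:kalsi}/\rref{92.5} mechanism---whose additive constant $\Delta^*$, hence the offset $\tilde\upsilon$ in \rref{195}, is precisely what blocks an exponential estimate---by an exact substitution that vanishes at the origin, and it makes quantitative why only semiglobality is obtained (a state-proportional disturbance of size $\mathcal O(\kappa_R^2/a^2)\norm{x}^2$ that grows with the operating radius and is dominated by speeding up the filter). In effect you give a Lyapunov-function rendition of the high-gain/singular-perturbation arguments of \cite{BURNAN,al:LORORT}, and your caveat that the $\norm{\vartheta}^2$-absorption needs a small margin beyond \rref{11} is no worse than the paper's own use of the slack $\lambda$ in \rref{195}.

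Two points fall short of airtight and mark the substantive difference with the paper. First, because you enlarge $a$ as a function of $R$, your chain is circular as written: $R=R(r)$ is produced by Lemma \ref{prop:one} for the closed loop with the \emph{original} gains, and once $a$ (with $b/a$ fixed) is retuned, the boundedness radius, the constants $c$ and $\kappa_R$, the sandwich constants $\beta_1,\beta_2$ (note $\beta_1$ degrades like $k_d/b$ as $b\to\infty$), and the admissible $\ep_1,\ep_2$ in \rref{179}--\rref{220} (which are further constrained through the $ad_M$ entry of the third matrix) all change with it. You must either show the uniform-boundedness estimate is uniform in $a$ beyond a threshold, or bypass Lemma \ref{prop:one} altogether by proving forward invariance of a sublevel set $\{W\leq \beta_2 r^2\}\subset B_{\sqrt{\beta_2/\beta_1}\,r}$ for the retuned gains; neither is done, and the bookkeeping is not vacuous. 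Second, what you prove is the gain-scheduled notion of semiglobal exponential stability (gains chosen per compact of initial conditions), whereas the paper's reading---the one needed for the subsequent corollary, which combines this statement with Theorem \ref{thm:main1} for the \emph{same} fixed gains to get ``exponentially stable on any compact''---is that a single tuning satisfying \rref{11} works for every $R$, with only the rate degrading. Your construction cannot recover that version: with $a$ fixed, the disturbance $\mathcal O(\kappa_R^2/a^2)$, with $\kappa_R=\mathcal O(R)$, eventually exceeds $c=\mathcal O(1/R^4)$, so the retuning in your argument is essential, not cosmetic.
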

\begin{proof}
  Referring to \rref{190} we see that for any $R$ there exists $c' = c-R^2 >0$, which depends on the control gains, such that $\dot V\leq -c'\norm{x}^2$.  
\end{proof}

From all the above we also conclude the following. 
\begin{corollary}
The origin of the closed-loop  system \rref{10} with \rref{90} under condition \rref{11} is uniformly globally asymptotically stable and exponentially stable on any compact. 
\end{corollary}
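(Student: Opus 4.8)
The statement bundles two conclusions, and my plan is to obtain each from results already in hand rather than to re-run the full machinery. The global half, uniform global asymptotic stability, is \emph{verbatim} the content of Theorem~\ref{thm:main1}, whose proof is assembled from uniform forward completeness (Lemma~\ref{prop:zero}), uniform global boundedness (Lemma~\ref{prop:one}), and the uniform stability/attractivity argument of Lemma~\ref{prop:two} fed into the integral criterion of Theorem~\ref{thm:uga}. So for that half I would simply invoke Theorem~\ref{thm:main1}. The only substantive remaining task is to recast the preceding corollary, which asserts \emph{semiglobal} exponential stability, into the formulation ``exponentially stable on any compact'': these are two names for the same property, so the work is to exhibit the estimate cleanly.

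For the exponential estimate on a fixed ball of initial data $B_r$, I would proceed as follows. First, fix the control gains according to \rref{11} and invoke uniform global boundedness (Lemma~\ref{prop:one}) to produce a radius $R$ such that every trajectory starting in $B_r$ remains in $B_R$ for all $t\geq t_\circ$. On that ball I would use the Lyapunov function $V$ of \rref{110} with the analysis parameters $\ep_1$, $\ep_2$ chosen as in \rref{180}, \rref{200}, \rref{220}; these render $V$ positive definite and decrescent, so that \rref{115} and \rref{117} furnish a two-sided bound $\mu\norm{x}^2\leq V\leq 2m_M\norm{x}^2$ with $\mu>0$. Starting from \rref{190}, the single positive term $k_ck_\delta\norm{\dtq}^2$ is bounded directly by $k_ck_\delta\norm{x}^2$ (rather than routed through the filter relation \rref{92.5} as in Lemma~\ref{prop:two}, which would leave the residual $\tilde\upsilon$), so that on $B_R$ one obtains $\dot V\leq -c'\norm{x}^2$ for a strictly positive $c'$. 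The comparison principle applied to $v(t):=V(t,x(t))$ then gives exponential decay of $v$, hence of $\norm{x}$, at rate $c'/2m_M$, and an estimate $\norm{x(t)}\leq \sqrt{2m_M/\mu}\,\mathrm{e}^{-(c'/4m_M)(t-t_\circ)}\norm{x_\circ}$ valid for all $x_\circ\in B_r$ and $t\geq t_\circ$. Since $r$ is arbitrary, this is exponential stability on every compact, which together with Theorem~\ref{thm:main1} completes the statement.

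The main obstacle, and the reason the two halves cannot be merged into a single global exponential estimate, is that the constant $c'$ degrades with $R$: the analysis parameters forced by \rref{180}, \rref{200}, \rref{220} scale like $\ep_2=\mathcal O(1/R^2)$ and $\ep_1=\mathcal O(1/R^4)$, so the negative coefficient of $\norm{x}^2$ in \rref{190} shrinks as the compact grows while the fixed positive term $k_ck_\delta\norm{\dtq}^2$ does not. Consequently $c'>0$ can be guaranteed only on balls of bounded radius, which is exactly why the property is semiglobal-exponential yet globally only asymptotic, and why the global attractivity of Theorem~\ref{thm:main1} has to be established through the integral criterion of Theorem~\ref{thm:uga} instead of a global strict Lyapunov inequality. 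The one genuinely load-bearing step in the compact argument is the appeal to uniform global boundedness: it is what confines the trajectory emanating from $B_r$ to the ball $B_R$ on which the strict inequality \rref{190} is valid, thereby licensing the integration of $\dot V\leq -c'\norm{x}^2$ along the \emph{entire} trajectory rather than only up to a first exit time.
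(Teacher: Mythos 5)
Your UGAS half is unobjectionable and coincides with the paper (it is literally Theorem \ref{thm:main1}), but the exponential half breaks at its one load-bearing inequality: the step where you bound the positive term $k_ck_\delta\norm{\dtq}^2$ in \rref{190} by $k_ck_\delta\norm{x}^2$ and conclude $\dot V\leq -c'\norm{x}^2$ with $c'>0$ on $B_R$. That conclusion requires $c>k_ck_\delta$, and this is not available. The constant $c$ in \rref{190} is manufactured from the analysis parameters $\ep_1,\ep_2$ of $V=V_1+V_2$ --- the damping in the $\dtq$-direction is essentially $\ep_2b\,d_m/2-\ep_1d_M$ --- and the conditions \rref{180}, \rref{200}, \rref{220} force $\ep_2=\mathcal O(1/R^2)$ and $\ep_1=\mathcal O(1/R^4)$, whereas $k_ck_\delta$ is a fixed plant/reference constant. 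Hence $c-k_ck_\delta<0$ as soon as $R$ is moderately large, and even on a small ball nothing in condition \rref{11} makes $c$ exceed $k_ck_\delta$: the admissible size of $\ep_2$ is capped by \rref{115} and by $\ep_2<a/2b$ independently of \rref{11}. The term $k_ck_\delta\norm{\dtq}^2$ is precisely the one that \rref{11} is tailored to absorb through the \emph{filter}, not through the crude bound $\norm{\dtq}\leq\norm{x}$: the paper routes it via \rref{92.5} into the $\vartheta$-damping (condition \rref{11} is exactly what makes $\frac{\lambda k_da}{2b}\geq\frac{k_ck_\delta a^2}{b^2}$), at the unavoidable price of the constant residual $\tilde\upsilon=k_ck_\delta\Delta^{*2}/b^2$ in \rref{eq:33} --- which is exactly why the global conclusion is asymptotic rather than exponential. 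Correspondingly, the exponential claim in the paper does not rest on your bound: it is inherited from the preceding corollary, whose proof extracts from \rref{190} a constant ``$c'=c-R^2>0$, which depends on the control gains'', i.e., positivity of $c'$ is bought by letting the damping dominate the $R$-dependent terms (the semiglobal mechanism, with the Lyapunov function rebuilt per compact), and the present corollary merely superposes that estimate with the global statement of Theorem \ref{thm:main1}.

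Your write-up is also internally inconsistent on exactly this point: the second paragraph asserts the strict bound $\dot V\leq -c'\norm{x}^2$ for the $R$ generated by an \emph{arbitrary} $r$ and concludes ``exponential stability on every compact'', while the third paragraph concedes that $c'>0$ ``can be guaranteed only on balls of bounded radius''. Both cannot stand; as written, your own closing caveat refutes your claimed conclusion. A related gloss is the identification of ``semiglobally exponentially stable'' with ``exponentially stable on any compact'': in the first corollary the constant $c'$ depends on the control gains (gains adapted to the compact), whereas the statement under review fixes the gains once and for all under \rref{11}, so the passage from one to the other is precisely what needs an argument. To repair the proof you must either strengthen the hypotheses so that the $\dtq$-damping beats $k_ck_\delta$ on the given compact --- which amounts to retuning gains with $R$ and reproves the \emph{first} corollary, not this one --- or follow the paper's route: keep the $k_ck_\delta\norm{\dtq}^2$ term inside the filter estimate and invoke the semiglobal corollary together with Theorem \ref{thm:main1}, whose combination is the paper's entire (one-line) justification.
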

Therefore, the origin is uniformly locally exponentially stable, which in turn implies that the state trajectories are uniformly square-integrable. 

\section{Systems with relative degree $m+2$}
\label{sec:extension}
In this section, we consider the output-feedback problem stated in Definition \ref{def:pblm} for systems such that  the control input enters through a chain of $m$ integrators that is,
\begin{subequations}\label{600}
  \begin{eqnarray}\label{600a}
D(q) \ddot q + C(q,\dot q)\dot q + g(q) & = & \opxi_1\\
\nonumber \dot \opxi_1 & = & \opxi_2 \\
\nonumber & \vdots & \\
\dot \opxi_m & = & u \label{600b}
\end{eqnarray}
\end{subequations}
where $u$, $\xi_i\in\mR^n$ for all $i\leq m$. 

The model \rref{600} covers several interesting cases which may be related to other challenging open problems of nonlinear control such as the control of Lagrangian (mechanical) systems, taking into account the {\em actuator} dynamics that is,
\begin{subequations}    \label{603}
  \begin{eqnarray}
    \label{603a}  D(q) \ddot q + C(q,\dot q)\dot q + g(q)   \ =\ \opxi_1\\
      \label{603b}  \dot x \ =\  f(t,x) + \tau, \quad \opxi_1 = h(t,x)
  \end{eqnarray}
\end{subequations}
where $x$ here denotes the actuator's state and $\tau$ the control input. Provided that the actuator dynamics \rref{603b} is input-output (globally) feedback-linearizable with respect to the output $\opxi_1$, the model \rref{603} may be transformed into \rref{600}. Although this task is very difficult in general, there exists a considerable bulk of literature on the subject, particularly for electrical machines --see \cite{MARTOM}. 
 A ``simple'' example concerns  the {\em flexible-joint} robot manipulator model, simultaneously and independently introduced in \cite{BURZAR,MCEJR}, 
\begin{subequations}    \label{595}
  \begin{eqnarray}
    \label{595a}  D(q_1) \ddot q_1 + C(q_1,\dot q_1)\dot q_1 + \g(q_1) & = & K(q_2-q_1)\\
    \label{595b}  J\ddot q_2 + K(q_2-q_1) & = & \tau
  \end{eqnarray}
\end{subequations}
where $q_1$ and $q_2$ denote, respectively, the link and actuator generalized coordinates, $\g$ represents potential forces,  $K$ is the joint-stiffness matrix (positive diagonal), $J$  is the rotor inertia matrix and $\tau$ is the (physical) control input. It is easy to see that this system may be `transformed' in a system of the  form \rref{600} with $m=2$. Let $\opxi_1 = Kq_2$ and $\opxi_2 = K\dot q_2$ then $\dopxi1 = \opxi_2$ and 
\[
\dot \opxi_2 = KJ^{-1} \big[\tau -\opxi_1 + Kq_1\big]
\]
so it suffices to define $\tau = \opxi_1 - Kq_1 + JK^{-1}u$ and $g(q_1)= \mbox{g}(q_1)+Kq_1$. Furthermore, it is clear that the same computation goes through if \rref{595b} contains nonlinear terms, provided that they may be canceled via output feedback.   

Back to \rref{600},  a natural candidate to solve the output-feedback control problem is backstepping control. Following such rationale, let us assume that $\opxi_1$ is a virtual control input to \rref{600a} and suppose that a given output-feedback control law $\opxi_1^*$ globally asymptotically stabilizes the origin $(\tilde q, \dot{\tilde q})=(0,0)$. After Theorem \ref{thm:main1}, the controller \rref{90} achieves this task however, there are two fundamental technical obstacles to carry on a recursive control design for \rref{600}. Firstly, backstepping control design relies on a control Lyapunov function for \rref{600a} with $\opxi_1=\opxi_1^*$ however, the proof of Theorem \ref{thm:main1} does not provide one and as far as we know this remains an open problem --see Section \ref{sec:disc}. Secondly, the successive derivatives of the virtual control inputs starting with $\dopxi1$, depend on the unmeasurable velocities and successive derivatives of the latter. 

Thus, although one cannot expect to apply classical backstepping control as such we follow a similar reasoning, relying on the use of {\em cascaded approximate differentiators} 
\begin{equation}
\label{602}  \vartheta_0 = \frac{b_0 s}{s+a_0}\tilde q,\
\end{equation}
\begin{equation}\label{604}
  \vartheta_i = \frac{b_i s}{s+a_i}{\opxi_i^*}, \quad i\in [1,m]
\end{equation}
and, in lack of Lyapunov's direct method, we carry out an inductive trajectory-based analysis along the proof-lines of Theorem \ref{thm:main1}. 

To that end, we introduce one more hypothesis for the {\em underactuated} system \rref{600}.
\begin{assumption}\label{ass2}
(1) The reference trajectories $t\mapsto \qd$ are solutions to the set of differential equations 
\begin{equation}
  \label{605}
 D(q_d)\ddqd  + C(q_d,\dqd)\dqd + g(q_d) = 0;
\end{equation}
(2) the Coriolis and centrifugal forces matrix function $C$ is globally Lipschitz and bounded in the first argument hence, there exists $\lambda>0$ such that, for each $y\in\mR^n$ 
\[
\norm{C(w,y)-C(z,y)} \leq \lambda\mbox{\rm sat}(\norm{w-z})\norm{y}
\] 
where {\rm sat} denotes a generic saturation function, for instance, {\rm sat}$(w)= \delta_1\mbox{\rm sgn}(w)\min\{\delta_2,|w|\}$ with $\delta_1$, $\delta_2>0$;\\
\noindent (3) the function representing the potential-energy force satisfies 
\begin{equation}
  \label{607}
\exists\ k_v > 0 \ : \ \quad \norm{\frac{\partial g}{\partial q}} \leq k_v \quad \forall\ q\in\mR^n.
\end{equation}
\end{assumption}

In view of Assumption \ref{ass1}, item (1) of Assumption \ref{ass2} implies that $q_d$ is (at least) twice continuously differentiable and the first two derivatives are bounded, hence they satisfy \rref{bndonqd}. The second item slightly reinforces (only) item 2 of Assumption \ref{ass1}; it holds if $C$ is continuously differentiable with a bounded derivative in $w$. The last two items hold for instance, for robot manipulators with only prismatic or only revolute joints, since $w\mapsto C$ and $q\mapsto g$ are defined via constants and trigonometric or linear-growth functions.
\begin{example}\label{rem:rmk3}
For the particular case of flexible-joint robots, where a preliminary control design stage is needed, Assumption \ref{ass2} holds provided that 
\begin{equation}
  \label{608}
 D(q_{1d})\ddot q_{1d} + C(q_{1d},\dot q_{1d})\dot q_{1d} + \g(q_{1d})+ Kq_{1d} = 0
\end{equation}
hence, defining $g(q_1) = \g(q_{1d})+ Kq_{1d}$, inequality \rref{607} holds if $\norm{\frac{\partial \mbox{\small \rm g}}{\partial q}} \leq k_v$ and $K$ is bounded\footnote{Interestingly, the latter holds for free since otherwise, as $\norm{K}\to\infty$ we recover the model \rref{10} --see \cite{SPOVID}.}.
\end{example}

For clarity of exposition we first deal with the case of one added integrator then, in Section \ref{sec:mg1}, we solve the problem for the general case.  

\subsection{Case of one added integrator}
\label{sec:mequal1}
We have $m=1$ hence the actual control input appears in the first integrator {\em i.e.},
\begin{equation}\label{619}
  \dopxi1 = u.
\end{equation}
Let us introduce the control gains $k_{p_i}$, $k_{d_i}$ as positive reals for all $i\in [0,m]$; let the gains with $i=0$ correspond to those in Section \ref{sec:globtrack}. Let $\opxi_1^*$  be defined as on the right-hand side of \rref{90c} with $k_p= k_{p_0}$, $k_d= k_{d_0}$ and $\vartheta=\vartheta_0$ --\cf \rref{602} that is, 
\begin{equation}
\label{610a}\opxi_1^*  =  -k_{p_0} \tilde q - k_{d_0} \vartheta_0 + D(q)\ddot q_d + C(q,\dot q_d)\dot q_d + g(q).
\end{equation}
It is obvious that the control law
\begin{equation}
  \label{620}
u = - k_{p1}\tilde \opxi + \dot\opxi^*_1
\end{equation}
makes the origin of the closed-loop system, $\dot{\tilde \opxi}_1 = -k_{p1}\topxi1$, globally exponentially stable; on the other hand, since  $\opxi_1 = \topxi1 + \opxi_1^*$ we have from \rref{600a} and \rref{602},
\begin{subequations}\label{625}
  \begin{eqnarray}
\label{625a}  && D(q)\ddot {\tilde q} + [\,C(q,\dot q) + C(q,\dot q_d)]\dot {\tilde q} + k_{p_0}  \tilde q + k_{d_0} \vartheta_0\  =\ \tilde \opxi_1 \\
\label{625b} && \dot \vartheta_0\ =\ -a_0\vartheta_0 + b_{0}\dtq.
  \end{eqnarray}
\end{subequations}

In view of Theorem \ref{thm:main1} the origin of the system \rref{625} with zero input $\topxi1=0$ is uniformly globally asymptotically stable therefore, a simple cascades argument establishes uniform global asymptotic stability for \rref{600} in closed loop with \rref{610a} and \rref{620}. Nevertheless, the implementation of \rref{620} requires $\dopxi1^*$ which depends on the unmeasured velocities; indeed\footnote{To avoid a cumbersome notation we drop the argument $(t)$ of $\qd$ and its derivatives and write $\dtq$ in place of $\dot q - \dqd(t)$.}, 
\begin{eqnarray}
    \nonumber \dot \opxi_1^*(t,q,\dot q,\vartheta_0) &=& \big[C(q,\dot q_d)+C(q,\dot q)+C(q,\dot q)^\top\big]\ddot q_d + D(q)q_d^{(3)}  
\\ &&
\quad+ M\dqd + \sfrac{\partial g}{\partial q}^{\top}\dot q 
 - k_{p_0} \dtq - k_{d_0}[-a_0\vartheta_0 + b_0 \dtq]
\label{623}
\end{eqnarray}
where
\begin{eqnarray*}
  M = 
  \begin{bmatrix}
    \dot q^\top \sfrac{\partial c_{d1}}{\partial q} + \ddqd^{\top}\sfrac{\partial c_{d1}}{\partial \dqd} \\
\vdots\\
\dot q^\top \sfrac{\partial c_{dn}}{\partial q} + \ddqd^{\top}\sfrac{\partial c_{dn}}{\partial \dqd}\\
  \end{bmatrix}
\end{eqnarray*}
and $c_{di}$ denotes the $i$-th row of $C(q,\dqd)$. 

As mentioned earlier, the dependence of $\dot\opxi_1^*$ on unmeasured state variables imposes several modifications to the controller previously introduced as well as an alternative to the cascades-based analysis. Firstly, we use approximate differentiation on $\opxi_1^*$ and we redefine the controller as 
\begin{subequations}\label{624}
  \begin{eqnarray}  
  \label{624a}
u &=& - k_{p1}\tilde \opxi_1 + k_{d1} \vartheta_1\\
  \label{624d} \vartheta_1 & = & q_{c1} + b_1 \opxi_1^* + \zeta_1\\
\label{624b}\dot q_{c1} &=& -a_1(q_{c1} + b_1 \opxi_1^* + \zeta_1) \\
\label{624c} \dot \zeta_1 &=& -k_{d1}\tilde \opxi_1.
\end{eqnarray}
\end{subequations}
Note that Equations \rref{624d}, \rref{624b} with $\zeta_1=0$ correspond to the state-representation of the approximate differentiation filter \rref{604} with $i=1$. The introduction of the ``extra'' integrator $\zeta_1$ is motivated by Lyapunov redesign, as it will become clearer below. Indeed, differentiating $\vartheta_1$ in \rref{624d}, using \rref{624b}, \rref{624c} and replacing \rref{624a} in \rref{619}, we obtain 
\begin{subequations}\label{627}
  \begin{eqnarray}
\label{627a} &&\hspace{-34.7mm} \dtopxi1\ =\ -k_{p1}\topxi1 + k_{d1}\vartheta_1 - \dopxi1^*\\ 
\label{627b} &&\hspace{-35.5mm} \dot \vartheta_1\ =\ -a_1\vartheta_1 -k_{d1}\topxi1 + b_1\dot\opxi_1^*.
  \end{eqnarray}
\end{subequations}

The resulting closed-loop system \rref{625}, \rref{627}  no longer forms a cascaded-interconnected system however, several interesting properties deserve to be underlined. Firstly, it is easy to see that in view of the `matching' terms $-k_{d1}\topxi1$ and $k_{d1}\vartheta_1$ the origin of \rref{627} with zero ``input'', $\dopxi1^*= 0$, is globally exponentially stable for any positive values of $k_{p1}$ and $a_1$. Secondly, in view of Assumption \ref{ass2}, $\opxi_1^*(t,\qd,\dqd,0)=0$ hence $ \dopxi1^*(t,\qd,\dqd,0)= \sfrac{d\xi_1^*}{dt}(t,\qd,\dqd,0) = 0$. In addition, invoking Assumption \ref{ass1} we see that  $\dopxi1$ is globally Lipschitz in all state variables, uniformly in $t$ and is bounded in the first two arguments therefore, there exist $\eta_1$,  $\eta_2$ and $\eta_3$ such that 
\begin{equation}
  \norm{\dot\opxi_1^*(t,q,\dot q,\vartheta_0)} \leq  \eta_1\sat(\norm{\tq}) + \eta_2\norm{\dtq} + \eta_3\norm{\vartheta_0}.
\label{650}
\end{equation}
Furthermore, a direct computation shows that the total derivative of 
\[
W_1 = \frac{1}{2}\Big[ \big|\topxi1\big|^2 + \norm{\vartheta_1}^2 \Big]
\]
along the trajectories of \rref{627} yields 
\begin{equation}
  \label{627.5}
\dot W_1 = -k_{p1} \big|\topxi1\big|^2 -a_{1} \norm{\vartheta_1}^2 + \dot\opxi_1^*(t,q,\dot q,\vartheta_0)^\top[b_1\vartheta_1 - \topxi1]
\end{equation}
so we conclude that the origin of the system \rref{627} is exponentially stable if $(\tilde q,\dtq,\vartheta_0)=(0,0,0)$ and otherwise input-to-state stable (with linear gain) from the input $(\tq,\dtq,\vartheta_0)$. In turn, the other two ($n$-dimensional) differential equations corresponding to the error dynamics \rref{625}, form a locally input-to-state stable system from the input $\topxi1$. Actually, following the proof guidelines of Lemma \ref{prop:two}, it may be showed that input-to-state-stability holds semiglobally. 

Even though the previous (fairly intuitive) reasoning in terms of small-gain arguments cannot be formally extended to the global case without a strict Lyapunov function for \rref{625}, we claim that the origin of \rref{627}, \rref{625} is uniformly {\em globally} asymptotically stable. The proof of the following statement follows the trajectory-based rationale of the proof of Theorem \ref{thm:main1}.

\begin{theorem}\label{thm:main2}
Consider the system \rref{625}, \rref{623}, \rref{627}, which corresponds to the closed-loop of \rref{600}, \rref{610a} and \rref{624}. Let Assumptions \ref{ass1}, \ref{ass2} hold and let the control gains satisfy
\begin{subequations}
  \begin{eqnarray}\label{632}
\label{632a}
&\displaystyle  
k_{d_0}\left[\sfrac{a_0}{4b_0} - 1 \right]  > \sfrac{3}{2} + \frac{(k_ck_\delta + 3/2)a_0^2}{b_0^2} & \\
\label{632b}
&\displaystyle k_{p1} > \sfrac{1}{2}\Big(\eta_2^2+\eta_3^2\Big) + 2 &\\
\label{632c}
&\displaystyle   a_{1} > \sfrac{1}{2}\Big(\eta_2^2+ \eta_3^2b_1^2 + 1\Big).&
\end{eqnarray}
\end{subequations}
 Then, the origin is uniformly globally asymptotically stable.
\end{theorem}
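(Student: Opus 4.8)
The plan is to replicate, for the full closed-loop state $x:=[\tq^\top\ \dtq^\top\ \vartheta_0^\top\ \topxi1^\top\ \vartheta_1^\top]^\top\in\mR^{5n}$, the four-step trajectory-based scheme used for Theorem \ref{thm:main1}: first uniform forward completeness, then the three properties of Definition \ref{def:ugas}. The structural observation driving everything is that the inner block \rref{625} is \emph{verbatim} the relative-degree-two closed loop of Theorem \ref{thm:main1}, now driven by the ``input'' $\topxi1$, whereas the outer block \rref{627} is globally exponentially stable when its forcing $\dopxi1^*$ vanishes and, by the identity \rref{627.5} together with the \emph{global} incremental bound \rref{650}, is input-to-state stable with respect to $(\tq,\dtq,\vartheta_0)$. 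As a first step I would establish uniform forward completeness exactly as in Lemma \ref{prop:zero}: using $V_1$ from \rref{120} for the inner block and $W_1$ for the outer one, the cross term $\dtq^\top\topxi1$ in $\dot V_1$ and the Lipschitz forcing $\dopxi1^*$ in $\dot W_1$ are each dominated by a quadratic in $\norm{x}$, the quadratic-in-velocity Coriolis term cancels by skew symmetry (Assumption \ref{ass1}.3), and one obtains a bound of the form \rref{eq:110}. This in turn makes $b_0\dtq$ bounded and uniformly continuous on bounded time intervals, so that Lemma \ref{lem:kalsi} applies to the filter equation \rref{625b}.

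Next I would prove uniform global boundedness by \emph{reductio ad absurdum}, mirroring Lemma \ref{prop:one}. Applying Lemma \ref{lem:kalsi} to \rref{625b} yields, for $a_0$ above a threshold \emph{independent} of the radius $r$, the bound \rref{92.5}; this is the device that converts the stubborn positive term $k_ck_\delta\norm{\dtq}^2$ --- present precisely because the inner block is lossless --- into a multiple of $\norm{\vartheta_0}^2$ plus a tunable constant. The composite function is $V:=V_1+V_2+\kappa W_1$, with $V_2$ from \rref{94} performing, as in the base case, the Lyapunov-redesign injection of genuine $-\norm{\tq}^2$ and $-\norm{\dtq}^2$ by borrowing damping from the filter $\vartheta_0$, and $\kappa>0$ weighting the exponentially-stable outer block. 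Conditions \rref{632b}, \rref{632c} are tailored so that the $-k_{p1}|\topxi1|^2$ and $-a_1\norm{\vartheta_1}^2$ terms of $\dot W_1$ dominate the coupling $\dopxi1^{*\top}(b_1\vartheta_1-\topxi1)$ after splitting \rref{650} by Young's inequality, leaving residual $\norm{\dtq}^2$, $\norm{\vartheta_0}^2$ and $\mbox{sat}^2$ contributions; condition \rref{632a} plays here the role of \rref{11}, guaranteeing that the \emph{net} coefficient of $\norm{\vartheta_0}^2$ --- after the Kalsi conversion of all velocity residuals and after the outer feedback --- stays negative. Running through the case analysis (successively unbounded $\norm{\vartheta_0}$, $\norm{\dtq}$, $\norm{\tq}$, and now $\norm{\topxi1}$ or $\norm{\vartheta_1}$) shows $\dot V(t)\le0$ whenever any component grows large, contradicting escape; uniform forward completeness then upgrades boundedness on $[\tau,\infty)$ to uniform global boundedness on $[t_\circ,\infty)$.

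Finally I would establish uniform stability and uniform global attractivity following Lemma \ref{prop:two}. Boundedness supplies, for each $r$, a radius $R$ with $x(t)\in B_R$. On $\mRp\times B_R$ I would evaluate $\dot V$ for $V=V_1+V_2+\kappa W_1$ and, choosing $\ep_1$, $\ep_2$ with the $R$-dependent scalings \rref{200}, \rref{220} (the outer gains $k_{p1}$, $a_1$ and $\kappa$ being fixed $R$-independently through \rref{632}), reach a bound of the form $\dot V(t)\le -c\norm{x(t)}^2 + k_ck_\delta\norm{\dtq(t)}^2 + (\text{small cross terms})$. Invoking \rref{92.5}, now valid for all $t\ge t_\circ$ by global boundedness, the residual velocity term is traded, via \rref{632a} and \rref{11}, for a non-positive multiple of $\norm{\vartheta_0}^2$ plus an arbitrarily small constant $\tilde\upsilon$, yielding $\dot v(t)\le-c\norm{x(t)}^2+\tilde\upsilon$ exactly as in \rref{eq:33}. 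Integrating and using the decrescent bound \rref{117} verifies the integral estimate \rref{eq:int-lem} of Theorem \ref{thm:uga} with $\gamma(s)=c|s|^2$; since $\tilde\upsilon$ is arbitrary, uniform global attractivity --- and, reading uniform stability off the same bound for $R$ small --- yields uniform global asymptotic stability.

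I expect the main obstacle to lie in closing the \emph{bidirectional} loop in the boundedness and attractivity steps: unlike a cascade, the forcing $\dopxi1^*$ of the outer block feeds the inner velocity $\dtq$ and filter state $\vartheta_0$ back through $\eta_2$, $\eta_3$ in \rref{650}, so one cannot bound the inner states first and the outer ones afterwards. The delicate point is to check that the $R$-independent conditions \rref{632b}, \rref{632c} genuinely suffice to dominate this feedback \emph{before} the $R$-dependent redesign constants $\ep_1$, $\ep_2$ are selected --- that is, that the outer block contributes no term forcing $\kappa$ or the gains to scale with $R$. This separation is exactly what makes the conclusion global rather than merely semiglobal.
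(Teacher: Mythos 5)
Your proposal follows essentially the same route as the paper's proof: uniform forward completeness via $\mathcal V_1 = V_1 + W_1$, uniform global boundedness by contradiction using Lemma \ref{lem:kalsi} to trade the lossless residual $k_ck_\delta\norm{\dtq}^2$ for $\norm{\vartheta_0}^2$ plus a constant and bringing in $V_2$ from \rref{94} when $\norm{\tq}$ escapes, and attractivity via $V_1+V_2+W_1$, the bound \rref{650} split by Young's inequality against \rref{632b}--\rref{632c}, and the integral criterion of Theorem \ref{thm:uga}. Your extra weight $\kappa$ on $W_1$ is an innocuous generalization (the paper takes $\kappa=1$), and your closing concern --- that \rref{632b}, \rref{632c} must remain $R$-independent, with the saturation in \rref{650} bounding the $\tq$-feedback --- is precisely how the paper secures globality.
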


\begin{proof}
The state of the closed-loop system \rref{625}, \rref{627} is
\begin{equation}\label{629}
  x = [\tq^\top\quad \dtq^\top\quad\vartheta_0^\top\quad\topxi1^\top\quad\vartheta_1^\top ]^ \top 
\end{equation}
and in view of Assumption \ref{ass2}.1, the origin is an equilibrium.

\noindent {\bf Uniform forward completeness.} Consider the function $V_1$ as defined in \rref{120}, for the system \rref{625} that is, 
\begin{equation}
  \label{640}
  V_1(t,\tq,\dtq,\vartheta_0) = \frac{1}{2}\left(
\dtq^\top D(\tq+\qd(t)) \dtq + k_{p_0} \norm{\tq}^2 + \frac{k_{d_0}}{b_0} \norm{\vartheta_0}^2 \right). 
\end{equation}
The total derivative of ${\mathcal V}_1:= V_1 + W_1$ along the trajectories of \rref{627}, \rref{625} yields
\begin{align}\nonumber
  \dot {\mathcal V}_1 = & -\sfrac{a_0 k_{d_0} }{b_0} \norm{\vartheta_0}^2 + k_ck_\delta\norm{\dtq}^2 +\dtq^\top\topxi1\\
&-k_{p1} \big|\topxi1\big|^2 -a_{1} \norm{\vartheta_1}^2 + \dot\opxi_1^{*\top}\big[b_1\vartheta_1 - \tilde \opxi_1\big]\label{645}
\end{align}
where $\dot \opxi_1^*$ is given in \rref{623}. Using \rref{650} and the triangle inequality, we obtain
\begin{eqnarray}\nonumber
  \dot{\mathcal V}_1 &\leq& -\Big[\sfrac{a_0k_{d_0}}{b_0}-1\Big]\norm{\vartheta_0}^2 + \big[k_ck_\delta + \sfrac{3}{2}\big] \norm{\dtq}^2 \\ 
&& - \left[a_{1}-\sfrac{b_1^2}{2}\big(\eta_2^2+ \eta_3^2\big)\right]\norm{\vartheta_1}^2
-\left[k_{p1}-\sfrac{1}{2}\big(1+\eta_2^2+\eta_3^2\big)\right]\big|\tilde\opxi_{1}\big|^2 + \eta_1\Big[b_1\norm{\vartheta_1} + \big|\tilde\opxi_{1}\big| \Big].
\label{700}
\end{eqnarray}
Proceeding as in the proof of Lemma \ref{prop:zero} --\cf \rref{140:b}, we obtain $\dot {\mathcal V}_1 \leq c\mathcal V_1$ for an appropriate choice of $c>0$ therefore, the closed-loop system is uniformly forward complete that is, the closed-loop trajectories satisfy \rref{eq:110} with an appropriate redefinition of $c_1$ and $c_2$.  

\noindent {\bf Uniform global boundedness.} As in the proof of Lemma \ref{prop:one}, we proceed by contradiction; let  $\norm{x(t)}\to\infty$ as $t\to \infty$. Firstly, if $\topxi1$ and $\vartheta_1$ are uniformly bounded then, in view of \rref{700}, there exists $c>0$ such that 
\begin{eqnarray}
  \dot{\mathcal V}_1 &\leq& -\Big[\sfrac{a_0k_{d_0}}{b_0}-1\Big]\norm{\vartheta_0}^2 + \big[k_ck_\delta + \sfrac{3}{2}\big] \norm{\dtq}^2 + c
\label{710}
\end{eqnarray}
--\cf \rref{140:b}. If on the contrary, $\big|\topxi1(t)\ \vartheta_1(t)\big|\to \infty$ as $t\to\infty$ then\footnote{In view of uniform forward completeness, the solutions may either be $uniformly$ bounded or grow unboundedly, also uniformly.}, for sufficiently large $t$ the last three terms on the right-hand side of \rref{700} become non-positive and \rref{710} holds with $c=0$. Arguing as in the first part of the proof of Lemma \ref{prop:one} we conclude that the same arguments as for $v_1(t)$ in \rref{93}, hold for $\mathcal V_1$ along closed-loop trajectories. That is, let Lemma \ref{lem:kalsi} generate, for any $\Delta^*$, a number $a^*(\Delta^*)$ such that \rref{92.5} holds with $b=b_0$ and $a=a_0\geq a_0^*$. Then, similarly to \rref{93} we have
\begin{eqnarray}
  \dot{\mathcal V}_1(t,x(t)) &\leq& -\sfrac{a_0}{b_0}\Big[k_{d_0} - \sfrac{b_0}{a_0}- \sfrac{a_0}{b_0}\big(k_ck_\delta+\sfrac{3}2\big) \Big]\norm{\vartheta_0(t)}^2 + \big[k_ck_\delta + \sfrac{3}{2}\big] \frac{\Delta^{*2}}{b_0^2} + c
\label{712}
\end{eqnarray}
for all $t\in [\tau,\tmax)$ and $a_0\geq a_0^*$ with $\tmax$ defined as in the proof of Lemma \ref{prop:one}. Proceeding further as in the proof of the latter, we argue that if $\norm{\vartheta_0(t)}\to \infty$ then a bound like \rref{93.2}, modulo obvious modifications (in the notation) to the factor of $\Delta^*$, holds for $\mathcal V_1(t,x(t))$; therefore, the solutions are uniformly globally bounded. If, on the contrary,  $\vartheta_0$ is uniformly bounded so is $\dtq$ since the latter is the input to the strictly proper and stable filter \rref{625b} with output $\vartheta_0$ and finite DC gain --see \rref{92.5}. Now assume that $\norm{\tq(t)}\to\infty$ and consider the function $V_2$ defined in \rref{94} by replacing $\vartheta$ with $\vartheta_0$; its time derivative yields $\dot V_2 = Y_2 + \topxi1^\top(\ep_1\tq-\ep_2\vartheta_0)$ where $Y_2$ corresponds to the right-hand-side of \rref{94.5}, modulo a few obvious modifications in the notation. Since all signals are uniformly globally bounded, except eventually $t\mapsto\tq$, it follows that \rref{95.5} holds for the trajectories generated by \rref{625}, \rref{627} with an appropriate (innocuous) redefinition of $c_1$ and $c_2$ and with $k_p = k_{p_0}$. The rest of the proof of Lemma \ref{prop:one} that is, the arguments below \rref{95.5}, continue to hold  {\em mutatis mutandis} and we conclude that the solutions of \rref{627}, \rref{625} are uniformly globally bounded.

\noindent  {\bf Uniform global attractivity}. We proceed as in the proof of Lemma \ref{prop:two}. In view of uniform global boundedness for each $r>0$ there exists $R(r)$ such that if $x(t_\circ)\in B_r$ then $x(t)\in B_R$ for all $t\geq t_\circ$ where $x$ is defined in \rref{629}.

Consider the function $\mathcal V:\mRp\times B_R\to\mR$ defined by 
\[
\mathcal V(t,x) = V_1(t,\tq,\dtq,\vartheta_0) + V_2(t,\tq,\dtq,\vartheta_0)+ W_1(\tilde \opxi_1,\vartheta_1) 
\]
where $V_1$ and $V_2$ are defined as in \rref{640} and \rref{94} respectively. With some obvious modifications, $\dot V_1 + \dot V_2 -\topxi1^\top \big[\dtq +\ep_1\tq-\ep_2\vartheta_0\big]$ satisfies \rref{179} hence, in view of Assumptions \ref{ass1} and \ref{ass2}, the condition\footnote{Note that \rref{632a} implies \rref{11} hence we may proceed as in the proof of Lemma \ref{prop:two}.} \rref{11}, and after the proof of Lemma  \ref{prop:two}, we obtain 
\begin{equation}
  \label{715}
\dot V_1 + \dot V_2 \leq -\sfrac{1}{2}\left[\ep_1k_{p_0}\norm{\tq}^2 + \ep_2b_{0}d_m\norm{\dtq}^2\right] -  \left[\sfrac{k_{d_0}a_0}{4b_0} - \ep_2k_{d_0}\right]\norm{\vartheta_0}^2 +k_ck_\delta\norm{\dtq}^2 + \topxi1^\top \big[\dtq +\ep_1\tq-\ep_2\vartheta_0\big]
\end{equation}
--\cf \rref{179}. Applying the triangle inequality to \rref{715} we obtain
\begin{equation}
  \label{716}
  \begin{split}
  \dot V_1 + \dot V_2 \leq -\sfrac{1}{2}&\left[\ep_1\big(k_{p_0}-1\big)\norm{\tq}^2 + \ep_2b_{0}d_m\norm{\dtq}^2\right]  -  \left[\sfrac{k_{d_0}a_0}{4b_0} - \ep_2\Big(k_{d_0}+\sfrac{1}{2}\Big)\right]\norm{\vartheta_0}^2  \\ & +\big(k_ck_\delta+\sfrac{1}{2}\big)\norm{\dtq}^2 + \sfrac{1}{2}\big(1+\ep_1+\ep_2\big)\big|\topxi1\big|^2.
  \end{split}
\end{equation}
On the other hand, $\dot W_1$ satisfies \rref{627.5} and in view of \rref{650} the last term on the right-hand side of \rref{627.5} is bounded from above by \tonio{we need to define these numbers $\eta$ ... }
\begin{align*}
  \Big[ \eta_1\sat(\norm{\tq}) + \eta_2\norm{\dtq} + \eta_3\norm{\vartheta_0}\Big]&\Big[b_1\norm{\vartheta_1} + \big|\topxi1\big|\Big]\\ 
&\hspace{-1in} \leq 
\sfrac{1}{2}\left[\eta_1^2(b_1^2 + 1)\sat(\norm{\tq})^2 + \left[1+\eta_2^2b_1^2+\eta_3^2b_1^2\right]\norm{\vartheta_1}^2+\left[1+\eta_2^2+\eta_3^2\right]\big|\topxi1\big|^2 + 2\norm{\dtq}^2 +  2\norm{\vartheta_0}^2 \right].
\end{align*}
Next, define 
\begin{eqnarray*}
  \alpha_1& =& \sfrac{k_{d_0}a_0}{4b_0} - \ep_2\Big(k_d +\sfrac{1}{2}\Big) - 1 
\\  \alpha_2 & =& k_{p1} - \sfrac{1}{2}\Big(\eta_2^2+\eta_3^2+2+\ep_1+\ep_2\Big)
\\  \alpha_3 & =& a_{1} - \sfrac{1}{2}\Big(\eta_2^2+ \eta_3^2b_1^2 + 1\Big)
\\  \alpha_4 & =& k_ck_\delta+\sfrac{3}{2}
\end{eqnarray*}
and, for each $\mu > 0$, $\alpha_5(\mu)  = \eta_1^2(b_1^2+1)\sat(\mu)^2 /2$ then
\[
\mathcal V(t,x) \leq -\sfrac{1}{2}\left[\ep_1k_{p_0}\norm{\tq}^2 + \ep_2b_{0}d_m\norm{\dtq}^2\right]-\alpha_1 \norm{\vartheta_0}^2 -\alpha_2\big|\topxi1\big|^2 -\alpha_3 \norm{\vartheta_1}^2 + \alpha_4\norm{\dtq}^2 + \alpha_5(\norm{\tq}).
\]
Next, let Lemma \ref{lem:kalsi} generate $\Delta^*$, $a^*$ as in the proof of Lemma \ref{prop:one} such that for all $a_0\geq a^*$ and all $t\geq t_\circ$, \rref{92.5} holds with $b=b_0$ and $a=a_0$. Then, let uniform global boundedness generate the existence of $\mu$ such that $\norm{\tilde q}\leq \mu$. For all $t\geq t_\circ\geq 0$, we define
\[
c = \min\big\{\ep_1k_{p_0},\ \ep_2b_{0}d_m,\ \alpha_2,\ \alpha_3\big\},\qquad z^\top = \big[\tq^\top\ \dtq^\top\ \tilde \opxi_1^\top \ \vartheta_1^\top\big].
\]
We have, for all   $t\geq t_\circ\geq 0$, all $r>0$ and all $x_\circ\in B_r$,
\[
\mathcal V(t,x(t)) \leq -c\big|z(t)\big|^2 - \Big[\alpha_1 - \sfrac{\alpha_4a_0^2}{b_0^2}\Big]\norm{\vartheta_0(t)}^2 + \Big[\alpha_5(\mu) + \sfrac{\alpha_4\Delta^{*2}}{b_0^2}\Big].
\]
Hence, in view of \rref{632}, the trajectories satisfy a bound like \rref{eq:33} that is,
\begin{equation}\label{745}
  \dot{ \mathcal V}(t,x(t)) \leq  -c'\norm{x(t)}^2  + \tilde\upsilon \quad\forall\, (t,x_\circ)\in \mRp\times B_r
\end{equation}
where $c'>0$ and $\tilde\upsilon(\mu,\Delta^*) = \alpha_5(\mu) + \sfrac{\alpha_4\Delta^{*2}}{b_0^2}$ is uniformly bounded in $\mu$. As in the proof of Lemma \ref{prop:two} we see that from uniform global boundedness of solutions there exists $\beta_{r\tilde\upsilon}>0$ such that 
\[
\int_{t_\circ}^t \big[c'\norm{x(s)}^2 - \tilde\upsilon \big]ds \leq \beta_{r\tilde\upsilon}  < \infty 
\]
for all $(t,x_\circ)\in \mRp\times B_r$ and for any $\tilde\upsilon(\mu,\Delta^*)$ that is, for any pair $(\mu,\Delta^*)$. The result follows invoking Theorem \ref{thm:uga}.
\end{proof}

\subsection{Case of $m>1$ added integrators}
\label{sec:mg1}
Now that we have established uniform global asymptotic stability of the origin of the closed-loop system \rref{625}, \rref{627} we generalize the controller \rref{624} to the case $m>1$. As previously explained, we follow a backstepping-like design with approximate differentiators and we use inductive analysis. Accordingly, let the reference to the second integrator, $\opxi_2^*$, correspond to the right-hand side of \rref{624a} and let the subsequent references $\opxi_{i>2}^*$ be defined in a similar manner that is, 
\begin{subequations}\label{750}
  \begin{eqnarray}  
   \label{750aa} \opxi^*_{i+1} &=& - k_{p_i}\tilde \opxi_i + k_{d_i} \vartheta_i \hspace{18.5mm} \forall\ i\in [1,m-1]\\
&&\hspace{-16mm}\left.     
\begin{array}{rcl}
       \label{750b} \vartheta_i & = & q_{ci} + b_i \opxi_i^* + \zeta_i\\
  \label{750c}\dot q_{ci} &=& -a_i(q_{ci} + b_i \opxi_i^*+ \zeta_i) \\
  \label{750d} \dot \zeta_i &=& -\big(k_{d_i} -\sigma_i\big)
  \tilde \opxi_i\\     
     \end{array} \right\} \quad\forall\ i\in [1,m]\\
  \label{750a} u &=& - k_{p_m}\tilde \opxi_m + k_{d_m} \vartheta_m
\end{eqnarray}
\end{subequations}
where $\sigma_i$ is a control redesign constant gain defined as
\[
\sigma_1 = 0, \quad \sigma_i = {b_ik_{p_{i-1}}}\ \, \forall\, i\in [2,m].
\]
 Therefore, using $\dot{\opxi}_i= {\topxi{i+1}} + \opxi^*_{i+1} \pm \dot\opxi_i^*$, we obtain
\begin{subequations}\label{760}
\begin{eqnarray}
\label{760a} && \dot{\tilde\opxi}_i\ =\ -k_{p_i}\topxii + k_{d_i}\vartheta_i - \dot\opxi_i^* + \tilde\opxi_{i+1} \qquad\forall\ i\in [1,m-1]\\ 
&& \dot{\tilde\opxi}_m\ =\ -k_{p_m}\tilde\opxi_m + k_{d_m}\vartheta_m - \dot\opxi_m^* \\
\label{760b} && \dot \vartheta_i\ =\ -a_i\vartheta_i -(k_{d_i}-\sigma_i)\topxii + b_i\dot\opxi_i^*\qquad \forall\ i\in [1,m]
  \end{eqnarray}
\end{subequations}

As for the case of one integrator, it is convenient to underline a few relevant features of the error equations \rref{760}. Firstly, and merely for the purpose of discussion, let $\dopxi{i}^*=0$ in \rref{760} for all $i\leq m$. Then, for each $i<m$, the dynamics \rref{760} is driven by the state of the $[i${\small +1}$]$th system in a {\em cascade} configuration. Implicitly labeled $\Sigma_i$, these blocks are illustrated in Figure \ref{fig:block} (see the shadowed block on the left, $\Sigma_m$) which depicts the error dynamical system.
\begin{figure}
  \input{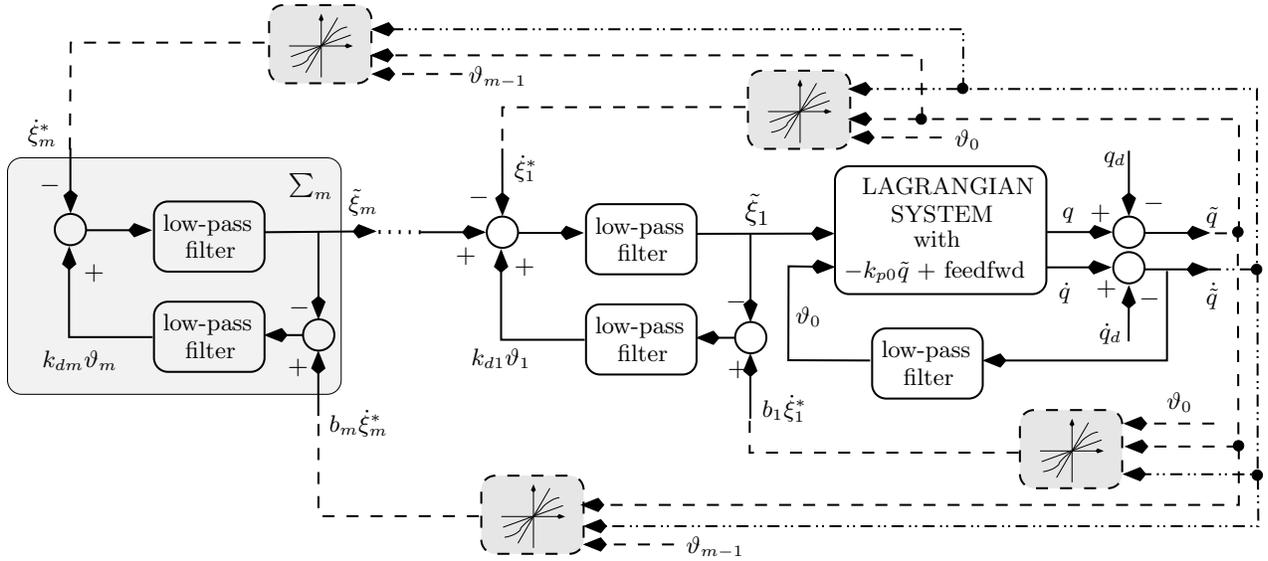}
  \caption{Schematic block-diagram of the error dynamics, {\em not} the control {\em implementation}. The nested interconnections represented by dashed lines are illustrated for analysis purpose. }
\label{fig:block}
\end{figure}
Each of these blocks $\Sigma_i$ is composed by two low-pass filters which are feedback-interconnected; the two filters are finite-gain strictly passive systems therefore, input-output stable and exponentially stable if the ``exogenous'' input $\dopxi{i}^*=0$. The last block (on the right) in the chain corresponds to the system defined by Eqs. \rref{625} that is, the Lagrangian system with the P$^+$ control law $-k_{p_0} \tilde q + D(q)\ddot q_d + C(q,\dot q_d)\dot q_d + g(q)$ --\cf \rref{90c} as input, feedback-interconnected with the approximate differentiator \rref{602}. Notice that this block consists in two passive systems feedback-interconnected and, in view of Theorem \ref{thm:main1}, it is input-to-state stable with respect to the input $\topxi1$. Thus, neglecting  the additional inputs $\dopxi{i}^*$ (that is, making abstraction of the interconnections represented by dashed lines in Fig. \ref{fig:block}) the overall closed-loop system may be considered as the {\em cascade} interconnection of (asymptotically) stable systems. 

Now we take into account $\dopxi{i}^*$ as additional input to each block $\Sigma_i$, implicitly illustrated in Figure \ref{fig:block} and defined by Eqs. \rref{760}. As for the case when $m=1$, $\dopxi{i}^*$ depends on the Lagrangian coordinates, thereby closing a feedback loop for each block $\Sigma_i$ and disrupting the cascaded structure. This is illustrated via dashed lines in Figure \ref{fig:block}; as it may be appreciated in the latter, the overall closed-loop system consists in a series of {\em nested} feedback loops in which the inner-most loop is defined by the Eqs. \rref{625} and the outer-most loop closes via  $\dot\opxi^*_m$. In the case that $m=1$ we recover the system analyzed in Section \ref{sec:mequal1} for which we have established uniform global asymptotic stability. Analogously to this case, if $m>1$ we are guided by a {\em nested} small-gain-based reasoning to establish the same property for systems of higher relative degree. However, since we do not dispose of Lyapunov functions for each block, our formal analysis is trajectory-based.

To that end, we start by collecting the scalar gains $a_i$, $b_i$, $k_{d_i}$ in diagonal matrices $A$, $B$ and $K_d$ respectively, to define 

$$K_p = 
\begin{bmatrix}
  k_{p_1} &\!\! -1 &\! 0 &\!\!\! \cdots &\!\! 0\\
   0     &\! \ddots &\ \ddots &\!\!\!  &\!\! \vdots\\
   \vdots &\!  &\!  &\!  &\! -1\\
   0     &\! \cdots &\! \cdots &\!\!\! 0 &\!\! k_{p_m}\\
\end{bmatrix},\quad 
\tilde \opxi =
\begin{bmatrix}
  \topxi1\\  \vdots\\  \tilde\opxi_m
\end{bmatrix}\quad
\mbox{and }\quad
\vartheta=\begin{bmatrix}
  \vartheta_1\\  \vdots\\  \vartheta_m
\end{bmatrix}.
$$
Then, the overall closed-loop system becomes 
  \begin{eqnarray}
\label{765}  &&
\left\{ \begin{array}{l}
  D(q)\ddot {\tilde q} + [\,C(q,\dot q) + C(q,\dot q_d)]\dot {\tilde q} + k_{p_0}  \tilde q + k_{d_0} \vartheta_0  =  \topxi1 \\
 \dot \vartheta_0\  =\ -a_0\vartheta_0 + b_{0}\dtq                                    
\end{array}\right. \\[2mm]
&&\left\{ \begin{array}{rcl}
\dtopxi{}& =& -K_{p}\tilde\opxi + K_{d}\vartheta - \dot\opxi^*  \\ 
\dot \vartheta & =& -A\vartheta -K_{d}\tilde\opxi + B\dot\opxi^*
\end{array} \right.
\label{770}  \end{eqnarray}
which has the same structure as \rref{625}, \rref{627}. Note that $-K_p$ is Hurwitz for any positive values of $k_{p_i}$; moreover, $K_p+K_p^\top$ is positive definite if  $k_{p_i}k_{p_{i+1}} > 1$ for all $i\leq m-1$. Furthermore, $\dot\opxi^*$ is a function of $(t,q,\dot q,\tilde\opxi,\vartheta)$ and, as we have seen for  $\dot\opxi_1^*$ in the previous section, it may be showed that under Assumptions \ref{ass1} and \ref{ass2} it follows that $\dot\opxi^*(t,q,\dot q,\tilde\opxi,\vartheta)$ is globally Lipschitz in all variables, uniformly in $t$, moreover $\dot\opxi^*(t,q_d,\dqd,0,0)\equiv 0$. In other words,  for the sake of analysis and following the small-gain rationale previously discussed, we may consider $\dot\opxi^*$ as an input to \rref{770} which consists in {\em output injection} terms that depend on $\tilde \opxi$, $\vartheta$ and terms that depend on the ``exogenous'' inputs that depend on $\tq_0$, $\dtq_0$, and $\vartheta$, which are generated by \rref{765}. 

To see this more clearly, we develop $\dot \opxi_{i+1}^*$ as a function of $\topxi{}$, $\vartheta$ and $\dot \opxi_{1}^* $. Note that from \rref{750aa}, \rref{760} we have, for all $i\in[1,m-1]$,
\tonio{
\[
\dot \opxi_{i+1}^* = -k_{p_i} \big[-k_{p_i}\topxii + k_{d_i}\vartheta_i - \dot\opxi_i^* + \tilde\opxi_{i+1} \big] + k_{d_i}\big[ -a_i\vartheta_i -k_{d_i}\topxii + b_i\dot\opxi_i^*\big]. 
\]
}
\[
\dot \opxi_{i+1}^* = \big[k_{p_i}^2 -k_{d_i}^2+k_{d_i}\sigma_i\big] \topxii - k_{d_i}\big[k_{p_i} + a_i\big] \vartheta_i - k_{p_i} \tilde\opxi_{i+1} + \big[k_{p_i} + k_{d_i}b_i\big]\dot\opxi_i^*
\]
hence, in view of the recursive definition of $\dot \opxi_{i}$ and the linearity in $\tilde\opxi_{i-1}$ and $\vartheta_{i-1}$, a direct albeit long computation --see the Appendix, shows that for all $i\in[2,m]$,
\[
 \dot \opxi_{i}^* = -k_{p_{i-1}}\tilde\opxi_{i} + \sum_{k=1}^{i-1} \eta_{i_k}\tilde\opxi_{k} - \mu_{i_k}\vartheta_k + \prod_{j=1}^{i-1} \beta_j \dot \opxi_1^*
\]
where we have defined, for all $i\in[2,m]$ and $k\in [1,i-1]$ 
\begin{eqnarray}
\label{771}&\displaystyle \beta_j =  k_{p_{j}} +b_{j}k_{d_{j}},\quad j\in [1, m-1]& \\
&\displaystyle \eta_{i_k} = \prod_{j=k+1}^{i}\beta_{j}^{\mbox{\scriptsize sgn}(i-j)}\big[k^2_{p_{k}} - k^2_{d_{k}}-k_{p_{k}}k_{p_{k-1}}\mbox{\rm sgn}(k-1)\big],
\label{771.5} & \\ &\displaystyle 
\mu_{i_k} = \prod_{j=k+1}^{i}\beta_{j}^{\mbox{\scriptsize sgn}(i-j)}k_{d_{k}}\big[k_{p_{k}} + a_{k}\big].&
\notag\end{eqnarray}
Therefore, 
\begin{equation}\label{772}
\dot \opxi^* = \Gamma_1 \tilde \opxi + \Gamma_2  \vartheta + \Gamma_3 \dot\opxi_1^*
\end{equation}
with
\[
\Gamma_1 =
\begin{bmatrix}
        0 &     0    &   \cdots & 0 \\
\eta_{\shrinkthis[0.6]{0.7}{2}_1} & -k_{p_1} &          & \vdots \\
\vdots    &          &  \ddots & \\
\eta_{\shrinkthis[0.6]{0.7}{m}_1} & \eta_{\shrinkthis[0.6]{0.7}{m}_2} & \cdots & -k_{p_{m-1}} 
\end{bmatrix}
\quad 
\Gamma_2 =
\begin{bmatrix}
        0 &     0    &   \cdots & 0 \\
-\mu_{\shrinkthis[0.6]{0.7}{2}_1} &  0 &          & \vdots \\
\vdots    &          &  \ddots & \\
-\mu_{\shrinkthis[0.6]{0.7}{m}_1} & -\mu_{\shrinkthis[0.6]{0.7}{m}_2} & \cdots & 0
\end{bmatrix}
\quad 
\Gamma_3 =
\begin{bmatrix}
1\\ \beta_1 \\ \beta_1\beta_2\\\vdots
\end{bmatrix}.
\]
We see that Eqs. \rref{770} take the form 
\begin{subequations}\label{778}
  \begin{eqnarray}
  \dtopxi{}& = & -(K_{p}+\Gamma_1)\tilde\opxi + (K_{d}-\Gamma_2)\vartheta - \Gamma_3\dopxi{1}^*  \\ 
  \dot \vartheta & =& -(A-B\Gamma_2)\vartheta - (K_{d}-B\Gamma_1)\tilde\opxi + B\Gamma_3\dopxi{1}^*.
\end{eqnarray}
\end{subequations}

Provided that one choose $A$, $K_p$, $K_d$ and $B$ to render the origin of \rref{778}, with $\dopxi{1}^*=0$, globally exponentially stable we obtain input-to-state stability of \rref{778} with respect to the input $\dopxi{1}^*$. Moreover, we recall that $\dot \opxi_1^*(t,q,\dot q,\vartheta_0)$ is globally Lipschitz in $q$, $\dot q$ and $\vartheta_0$, uniformly in $t$ and satisfies $\dot\opxi_1^*(t,\qd,\dqd,0)\equiv 0$ therefore, one might also establish input-to-state stability with respect to the input $(t,\tq,\dtq,\vartheta_0)$. On the other hand, the system \rref{765} is (at least locally) input-to-state stable with respect to the input $\topxi{1}$. Thus, it is reasonable to conjecture that a small-gain argument applies to the closed-loop system \rref{765}, \rref{770} or equivalently to the interconnected system \rref{765}, \rref{778}, to conclude that the origin is uniformly globally asymptotically stable. Unfortunately, as for the case of one integrator, the proof of this conjecture relies on ISS Lyapunov functions, which we do not dispose of.  Instead, the proof of the following statement relies on an inductive trajectory-based analysis built upon the proof of Theorem \ref{thm:main1} along the proof-lines of Theorem \ref{thm:main2}.

\begin{theorem}\label{thm:main3}
Consider the system \rref{600} and the output-feedback dynamic controller \rref{610a}, \rref{750} under Assumptions \ref{ass1}, \ref{ass2}. Let 
\begin{eqnarray}
\notag 
\mathcal A  = 
\begin{bmatrix}  \hspace{2mm} 
   - k_{p_1}       &   1     &          0          &   0     &  k_{d_1}   &  0  &   \cdots     &   \cdots  &   0  \\ 
  -\eta_{\shrinkthis[0.6]{0.7}{2}_1}      & \hspace{0mm}\shrinkthis[1]{0.9}{-[k_{p_2}\hspace{-1mm}-k_{p_1}]}  &         &    &      \mu_{\shrinkthis[0.6]{0.7}{2}_1}       &  \ddots    &   &    &     \\ 
   \vdots         &      -\eta_{\shrinkthis[0.6]{0.7}{3}_2}       &       \ddots        &         &         \vdots     &    \mu_{\shrinkthis[0.6]{0.7}{3}_2}     & \quad        &                  \\ 
&&& 1&&\vdots&&\ddots&0 \\
  -\eta_{\shrinkthis[0.6]{0.7}{m}_1}      &    -\eta_{\shrinkthis[0.6]{0.7}{m}_2}       &       \cdots        &    \shrinkthis[1]{0.9}{-[k_{p_m}\hspace{-2mm}-k_{p_{m-1}}]}     &      \mu_{\shrinkthis[0.6]{0.7}{m}_1}       &       \mu_{\shrinkthis[0.6]{0.7}{m}_2}      &      \cdots       &    \shrinkthis[1]{0.9}{\mu_{\shrinkthis[0.6]{0.7}{m}_{m-1}} } & k_{d_m}         \\[2mm]
  -k_{d_1}  &  0   &  \cdots  &  0  &  -a_1  &  0   &  \cdots &  &  0    \\ 
  \shrinkthis[1]{0.9}{b_2\eta_{\shrinkthis[0.6]{0.7}{2}_1}} & -k_{d_2}  &    &   &  \shrinkthis[1]{0.9}{-b_2\mu_{\shrinkthis[0.6]{0.7}{2}_1} }  &  \ddots  &  &  &   \vdots   \\ 
   \vdots  & \shrinkthis[1]{0.9}{b_3\eta_{\shrinkthis[0.6]{0.7}{3}_2}} &  \ddots  &  &  \vdots & \shrinkthis[1]{0.9}{-b_3\mu_{\shrinkthis[0.6]{0.7}{3}_2}}  & &   &                  \\ 
&&& 0&&\vdots&&\ddots & 0 \\
  \shrinkthis[1]{0.9}{b_m\eta_{\shrinkthis[0.6]{0.7}{m}_1} }     &    \shrinkthis[1]{0.9}{b_m\eta_{\shrinkthis[0.6]{0.7}{m}_2} }      &       \cdots        &    -k_{d_m}     &      \shrinkthis[1]{0.9}{-b_m\mu_{\shrinkthis[0.6]{0.7}{m}_1} }      &       \shrinkthis[1]{0.9}{-b_m\mu_{\shrinkthis[0.6]{0.7}{m}_2}}      &      \cdots       &   \shrinkthis[1]{0.9}{-b_m\mu_{\shrinkthis[0.6]{0.7}{m}_{m-1}} } &   -a_m         
 \end{bmatrix} \hspace{0mm}\kronecker I
\end{eqnarray}
\begin{eqnarray}
\notag 
\mathcal B = 
\begin{bmatrix}
  -I\quad
  -\beta_1I\quad
  -\shrinkthis[1]{0.9}{\beta_1\beta_2}I\quad
  \cdots\quad
  -\shrinkthis[1]{0.9}{\displaystyle\prod_{j=1}^{m-1}\beta_j}I\quad
  b_1I\quad
  \shrinkthis[1]{0.9}{b_2\beta_1}I\quad
  \shrinkthis[1]{0.9}{b_3\beta_1\beta_2}I\quad
  \cdots\quad 
  \shrinkthis[1]{0.9}{b_m\displaystyle\prod_{j=1}^{m-1}\beta_j}I\quad
\end{bmatrix}^\top.
\end{eqnarray}
Let the control gains be such that $\mathcal A$ is Hurwitz, 
\begin{equation}
  k_{d_0}\left[\sfrac{a_0}{4b_0} - m \right]  > \sfrac{m+2}{2} + \frac{[k_ck_\delta + (m+2)/2]a_0^2}{b_0^2}\label{lego}
\end{equation}
 and there exist positive definite matrices $Q$ and $P$ such that $Q= -\mathcal A^\top P - P \mathcal A$ and 
\begin{equation}\label{786}
  Q > (\eta_2^2+\eta_3^2)\mbox{\rm diag}\big\{ \norm{ [ P\mathcal B]_i }^2\big\}
\end{equation}
where $[ P\mathcal B]_i$ with $i\in \{1\ldots 2m\}$ denotes the $i$th $n\times n$ block of $P\mathcal B$. Then, the origin of the closed-loop system is uniformly globally asymptotically stable. 
\end{theorem}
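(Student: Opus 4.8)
The plan is to mirror the four-part, trajectory-based structure used for Theorems \ref{thm:main1} and \ref{thm:main2}: first uniform forward completeness, then uniform global boundedness, then uniform (asymptotic) stability together with uniform global attractivity, the latter concluded via the integral criterion of Theorem \ref{thm:uga}. The central analytic object is the composite function
\[
\mathcal V(t,x) = V_1(t,\tq,\dtq,\vartheta_0) + V_2(t,\tq,\dtq,\vartheta_0) + w^\top P w,\qquad w:=\big[\tilde\opxi^\top\ \vartheta^\top\big]^\top\in\mR^{2mn},
\]
where $V_1$, $V_2$ are as in \rref{640} and \rref{94} (with $\vartheta$ replaced by $\vartheta_0$), governing the Lagrangian block \rref{765}, and $w^\top P w$ is the quadratic Lyapunov function for the linear chain \rref{778}, whose system matrix is $\mathcal A$. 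Since $\mathcal A$ is assumed Hurwitz, the matrices $P$, $Q=-\mathcal A^\top P-P\mathcal A>0$ exist, and along \rref{778} one gets $\frac{d}{dt}(w^\top P w)=-w^\top Q w + 2w^\top P\mathcal B\,\dot\opxi_1^*$. The generalized Lipschitz bound \rref{650} on $\dot\opxi_1^*$ is exactly what couples this block back to the Lagrangian states $(\tq,\dtq,\vartheta_0)$, and condition \rref{786} is precisely the inequality that, after a block-wise Young's inequality applied to $2w^\top P\mathcal B\,\dot\opxi_1^*=2\sum_{i=1}^{2m}(w^{(i)})^\top[P\mathcal B]_i\,\dot\opxi_1^*$, lets $-w^\top Q w$ absorb the $\eta_2,\eta_3$ contributions of the cross term, leaving over only a bounded $\sat(\norm{\tq})^2$ term together with innocuous multiples of $\norm{\dtq}^2$ and $\norm{\vartheta_0}^2$.

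For \emph{uniform forward completeness} I would proceed as in Lemma \ref{prop:zero} and the corresponding step of Theorem \ref{thm:main2}: bounding every term of $\dot{\mathcal V}$ by a quadratic in $x$ yields $\dot{\mathcal V}\le c\,\mathcal V$ for some $c>0$, hence the exponential estimate \rref{eq:110} with redefined constants. For \emph{uniform global boundedness} I would argue by contradiction following Lemma \ref{prop:one}, assuming $\norm{x(t)}\to\infty$ and splitting on which coordinates escape. If the chain state $w$ grows unboundedly, then for large $t$ the term $-w^\top Q w$ dominates --- here \rref{786} guarantees that the residual of $2w^\top P\mathcal B\,\dot\opxi_1^*$ cannot overturn its sign --- so $w^\top Pw$ stays bounded, a contradiction. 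If instead $w$ (hence the output-injection input $\topxi1$ to \rref{765}) is bounded, the Lagrangian block reduces to the situation of Lemma \ref{prop:one}: invoking Lemma \ref{lem:kalsi} to produce the estimate \rref{92.5} for $\dtq$ and reasoning on $V_1$ and then $V_2$ as below \rref{95.5}, one shows $\tq$, $\dtq$, $\vartheta_0$ bounded, again a contradiction. Condition \rref{lego} is the generalization of \rref{632a} (and \rref{11}) that makes the $V_1$-step close in the presence of the $m$ coupling terms and the Kalsi residual.

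For \emph{uniform global attractivity}, with $R(r)$ furnished by boundedness I would compute $\dot{\mathcal V}$ on $\mRp\times B_R$, combine the bound \rref{179} for $\dot V_1+\dot V_2$ (the coupling term in $\topxi1$ being handled exactly as in Theorem \ref{thm:main2}) with the chain estimate, complete squares using \rref{786}, and then invoke Lemma \ref{lem:kalsi} so that \rref{92.5} holds for all $t\ge t_\circ$. This yields a bound of the form \rref{745}, namely $\dot{\mathcal V}(t,x(t))\le -c'\norm{x(t)}^2+\tilde\upsilon$, with $\tilde\upsilon$ collecting the bounded $\sat$ term and the $\Delta^{*2}$ Kalsi term and hence renderable arbitrarily small. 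Integrating and using $\mathcal V(t_\circ)\le(\text{const})\norm{x_\circ}^2$ verifies \rref{eq:int-lem}, so Theorem \ref{thm:uga} delivers uniform global asymptotic stability.

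The main obstacle will be the uniform global boundedness step. Because the Lagrangian block and the integrator chain are coupled in \emph{both} directions, through $\topxi1$ and through $\dot\opxi_1^*$, there is no cascade or ISS-Lyapunov shortcut (as the paper itself stresses), so the reductio ad absurdum must be carried out with care: the constants have to be shown independent of the radius $r$ and of the possibly-extended interval $[\tau,\tmax)$, and \rref{786} must be applied block-wise so that the decrease $-w^\top Q w$ survives the worst-case alignment of $2w^\top P\mathcal B\,\dot\opxi_1^*$. Verifying that this nested bookkeeping closes --- that absorbing the leftover $\norm{\dtq}^2$ and $\norm{\vartheta_0}^2$ into the $V_1$, $V_2$ machinery does not re-inflate the gains relative to $R$ --- is the delicate part of the argument.
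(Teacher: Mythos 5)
Your proposal matches the paper's proof essentially verbatim: the paper also takes the quadratic function $W(\opchi)=\sfrac{1}{2}\opchi^\top P\opchi$ for the chain \rref{787}, bounds the cross term $\opchi^\top P\mathcal B\,\dot\opxi_1^*$ block-wise via \rref{650} and Young's inequality so that \rref{786} renders the $\opchi$-quadratic negative definite, combines this with $V_1$ (and $V_2$ where needed) exactly as you do, and defers the forward-completeness, boundedness-by-contradiction (with Lemma \ref{lem:kalsi}) and attractivity (via Theorem \ref{thm:uga}) steps \emph{mutatis mutandis} to the proof of Theorem \ref{thm:main2}. The only cosmetic deviations are your factor-of-two normalization of $W$ and your inclusion of $V_2$ in the composite function from the outset rather than only in the boundedness and attractivity steps.
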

\noindent {\bf Sketch of proof.} The proof follows {\em mutatis mutandis} that of Theorem \ref{thm:main2}. Defining $\opchi:= [\topxi{}\ \vartheta]^\top$ and the closed-loop equations \rref{778} become 
\begin{equation}
  \label{787}\dopchi = \mathcal A \opchi + \mathcal B \dot \opxi_1^*.
\end{equation}
Then, the total derivative of 
\begin{equation}
   W(\opchi) = \sfrac{1}{2}\opchi^\top P \opchi 
\label{789}
\end{equation}
along the trajectories generated by \rref{787}  satisfies
\begin{equation}
  \dot W \leq -\sfrac{1}{2}\opchi^\top Q\, \opchi + \opchi^\top P\mathcal B\, \dot \opxi_1^* .
\label{790}
\end{equation}
On the other hand, using \rref{650} we obtain
\begin{eqnarray*}
  \opchi^\top P\mathcal B\, \dot \opxi_1^* & = & \left(\sum_{i=1}^m\topxi{i}^\top[ P\mathcal B]_i + \vartheta_i^\top[ P\mathcal B]_{m+i}\right)\dot\opxi^*_1\\
&& \leq \ \frac{1}{2}\left(\sum_{i=1}^m\norm{\topxii}^2\norm{[ P\mathcal B]_i}^2+ \norm{\vartheta_i}^2\norm{[ P\mathcal B]_{m+i}}^2  \right)(\eta_2^2+\eta_3^2)\\ &&\qquad +\, \eta_1\left(\sum_{i=1}^m\norm{\topxii}\norm{[ P\mathcal B]_i} + \norm{\vartheta_i}\norm{[ P\mathcal B]_{m+i}}  \right) + \left(\norm{\dot {\tilde q}}^2 + \norm{\vartheta_0^2}\right)m.
\end{eqnarray*}
hence the total derivative of 
\[
\mathcal V(t,\opchi) =  \frac{1}{2}\left(
\dtq^\top D(\tq+\qd(t)) \dtq + k_{p_0} \norm{\tq}^2 + \frac{k_{d_0}}{b_0} \norm{\vartheta_0}^2 \right)
+ W(\opchi)
\]
yields
\begin{eqnarray*}
  \dot {\mathcal V} &\leq & -\Big[\sfrac{a_0k_{d_0}}{b_0}-m\Big]\norm{\vartheta_0}^2 + \big[k_ck_\delta + \sfrac{m+2}{2}\big] \norm{\dtq}^2 
- \opchi^\top \left[Q- (\eta_2^2+\eta_3^2)\mbox{diag}\big\{ \norm{ [ P\mathcal B]_i }^2\big\}\right]  \,\opchi 
\\  && \quad
+\, \eta_1\left(\sum_{i=1}^m\norm{\topxii}\norm{[ P\mathcal B]_i} + \norm{\vartheta_i}\norm{[ P\mathcal B]_{m+i}}  \right).
\end{eqnarray*}
By assumption, the quadratic term in $\opchi$ is negative definite. The rest of the proof follows as for Theorem \ref{thm:main2} --\cf \rref{700}.
\qed

Although Theorem \ref{thm:main3} makes a general statement, the obvious drawback is the lack of explicit conditions on the control gains, which ensure uniform global asymptotic stability. The following proposition gives sufficient conditions on the control gains for the conditions of Theorem \ref{thm:main3} to be satisfied. 

\begin{proposition}\label{prop1}
Consider the system \rref{600} and the output-feedback dynamic controller  \rref{602}, \rref{610a} and \rref{750} under Assumptions \ref{ass1} and \ref{ass2}. Let the control gains be such that \rref{632} holds, the matrix $Q\,-$\,\textonehalf\,diag$\{Q\}$, where diag$\{Q\}$ corresponds to the main diagonal of 

\begin{picture}(220,220)
\put(82,112){\psframe*[linecolor=mygray](-1,-.2)(5,3.3)}
\put(233,28){\psframe*[linecolor=mygray](.1,-.7)(7.45,2.8)}
\put(340,170){{\color{mygray2}\Huge $\mathbf Q_2$}}
\put(190,163){{\color{mygray2}\Huge $\mathbf Q_1$}}
\put(340,70){{\color{mygray2}\Huge $\mathbf Q_3$}}
  \put(0,110){
    \begin{minipage}[c]{\textwidth}  
\[
Q  = 
\begin{bmatrix}  \hspace{2mm} 
    \shrinkthis[1]{0.9}{2k_{p_1}}  &   -1     & 0  & \hspace{-9mm}\cdots\quad  0     & 
\jala 0 & \jala\cdots & \jala & \jala   \cdots  & \jala   0  \\ 
 -1  & \hspace{-1mm}\shrinkthis[1]{0.9}{2[k_{p_2}\hspace{-1mm}-k_{p_1}]}  &         &   
 & \jala -\mu_{\shrinkthis[0.6]{0.7}{2}_1}  & \jala \ddots & \jala  & \jala    & \jala\vdots   \\ 
  0   &    \qquad\ddots   & &    & 
\jala\vdots & \jala-\mu_{\shrinkthis[0.6]{0.7}{3}_2}   & \jala  & \jala\\ 
 \vdots   &  &  & -1 &\jala   &\jala  \vdots &\jala  &\jala  \ddots &\jala  0 \\[1.3mm]
 0   &     \hspace{-12mm} \cdots   & -1  &  \shrinkthis[1]{0.9}{2[k_{p_m}\hspace{-2mm}-\!k_{p_{m-1}}]}     &\jala   -\mu_{\shrinkthis[0.6]{0.7}{m}_1}       &\jala        -\mu_{\shrinkthis[0.6]{0.7}{m}_2}      &\jala       \cdots       &\jala     \shrinkthis[1]{0.9}{-\mu_{\shrinkthis[0.6]{0.7}{m}_{m-1}} } &\jala 0        \\[2mm]
0  &  -\mu_{\shrinkthis[0.6]{0.7}{2}_1}   & \cdots  & -\mu_{\shrinkthis[0.6]{0.7}{m}_1}  & 
\jala 2a_1  &   \jala \shrinkthis[1]{0.9}{b_2\mu_{\shrinkthis[0.6]{0.7}{2}_1} }    &   \jala  &  \jala & \hspace{-14mm} \cdots \qquad \shrinkthis[1]{0.9}{b_m\mu_{\shrinkthis[0.6]{0.7}{m}_1} }    \\
  \vdots & \hspace{-12mm}\ddots  &    & \vdots  &  \jala  \shrinkthis[1]{0.9}{b_2\mu_{\shrinkthis[0.6]{0.7}{2}_1} }  &   \hspace{-1.5mm} \dgdots{55}  &  \jala  &   \jala &   \jala  \vdots   \\ 
  &  &  \ \ddots  &  &   \jala \vdots &   \jala\shrinkthis[1]{0.9}{b_3\mu_{\shrinkthis[0.6]{0.7}{3}_2}}  &  \jala &   \jala  &                  \\ 
\vdots&&\ddots & -\mu_{\shrinkthis[0.6]{0.7}{2}_{m-1}} &    &\jala \vdots& \jala   & \jala   \ddots & \jala    \shrinkthis[1]{0.9}{b_m\mu_{\shrinkthis[0.6]{0.7}{m}_{m-1}} } \\[1.3mm]
  0    &  \hspace{-12mm} \cdots    &       \cdots        &    0 
   & \mbox{\hspace{0mm}     \shrinkthis[1]{0.9}{b_m\mu_{\shrinkthis[0.6]{0.7}{m}_1} }     }
 & \mbox{\hspace{-2mm}    \shrinkthis[1]{0.9}{b_m\mu_{\shrinkthis[0.6]{0.7}{m}_2}}    }
  &  \mbox{\hspace{-5mm}    $\cdots$     }
  & \mbox{\hspace{-2mm}  \shrinkthis[1]{0.9}{b_m\mu_{\shrinkthis[0.6]{0.7}{m}_{m-1}} }}
 & \mbox{\hspace{-3mm}  $2a_m$         }
 \end{bmatrix}  
\kronecker I
\]
    \end{minipage}
  }
\end{picture}

\noindent is positive semidefinite, \rref{lego} holds, 
\begin{subequations}\label{794}
  \begin{eqnarray}
  \label{794a} 
&\displaystyle k_{p_1} =  k_{d_1}& \\
  \label{794b} 
&\displaystyle \min\left\{a_1,k_{p_1}\right\} > \eta_2^2 + \eta_3^2
\end{eqnarray}
\end{subequations}
 and, for  all   $i\in [2,m]$, 
\begin{subequations}  \label{795}
  \begin{eqnarray}
  \label{795a}
&\displaystyle k_{p_i} = \frac{1}{2}\Big[k_{p_{i-1}} + \sqrt{k_{p_{i-1}}^2 +4 k_{d_i}}\ \Big] & \\
  \label{795b}
&\displaystyle k_{p_i} > \shrinkthis[1]{0.95}{[\eta_2^2+\eta_3^2]}\left(\mbox{\small $\displaystyle\prod_{j=0}^{i-1}$}\beta_j\right)^2 +\, k_{p_{i-1}} &
\\
&\displaystyle   \label{795c} a_{i} > \shrinkthis[1]{0.95}{[\eta_2^2+\eta_3^2]}\left(\mbox{\small $b_i\displaystyle\prod_{j=0}^{i-1}$}\beta_j\right)^2&
  \end{eqnarray}
\end{subequations}
where $\beta_0 := 1$, ---see \rref{650} and \rref{771} for other definitions.  Then, the origin is uniformly globally asymptotically stable.
\end{proposition}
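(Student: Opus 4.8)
The plan is to exhibit in closed form the Lyapunov certificate $(P,Q)$ demanded by Theorem~\ref{thm:main3}, taking simply $P=I$, and then to discharge the two matrix requirements of that theorem by reducing them to the scalar gain relations \rref{794}--\rref{795}. With $P=I$ one has $-\mathcal{A}^\top P-P\mathcal{A}=-(\mathcal{A}+\mathcal{A}^\top)$, so the whole argument pivots on a single matrix identity, namely that $-(\mathcal{A}+\mathcal{A}^\top)$ coincides with the matrix $Q$ displayed in the statement; everything else is linear algebra performed on $Q$.

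First I would establish this identity. The key observation is that, by \rref{771.5}, each coupling coefficient $\eta_{i_k}$ factorizes as the (nonzero) product $\prod_{j=k+1}^{i-1}\beta_j$ times the bracket $k_{p_k}^2-k_{d_k}^2-k_{p_k}k_{p_{k-1}}\mathrm{sgn}(k-1)$, and conditions \rref{794a} (for $k=1$) and \rref{795a} (for $k\ge 2$) are precisely those that annihilate this bracket for every $k$, whence $\eta_{i_k}\equiv 0$. Consequently the matrix $\Gamma_1$ of \rref{772} collapses to its diagonal, the $\tilde\xi$-block of $\mathcal{A}$ becomes upper-bidiagonal, and upon symmetrization the block-diagonal couplings $\pm k_{d_i}$ between $\tilde\xi_i$ and $\vartheta_i$ cancel. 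A block-by-block computation then shows that $-(\mathcal{A}+\mathcal{A}^\top)$ reduces exactly to the displayed $Q$: a tridiagonal $\tilde\xi$-block with diagonal $2(k_{p_i}-k_{p_{i-1}})$ and $-1$ off-diagonals, the surviving $\mu_{i_k}$-entries in the two coupling blocks, and diagonal $2a_i$ in the $\vartheta$-block.

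Next I would deduce positive definiteness of $Q$ and, from it, Hurwitzness of $\mathcal{A}$. Writing $Q=\bigl[Q-\tfrac12\,\mathrm{diag}\{Q\}\bigr]+\tfrac12\,\mathrm{diag}\{Q\}$, the first summand is positive semidefinite by hypothesis, while $\tfrac12\,\mathrm{diag}\{Q\}=\mathrm{diag}(k_{p_1},k_{p_2}-k_{p_1},\dots,k_{p_m}-k_{p_{m-1}},a_1,\dots,a_m)$ is positive definite because \rref{795a}--\rref{795b} force $k_{p_i}>k_{p_{i-1}}$ and each $a_i>0$. Hence $Q\succ 0$; and since $P=I\succ 0$ with $-\mathcal{A}^\top P-P\mathcal{A}=Q\succ 0$, the standard Lyapunov inequality gives that $\mathcal{A}$ is Hurwitz, supplying the first hypothesis of Theorem~\ref{thm:main3}, while \rref{lego} is assumed verbatim.

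It then remains to verify the spectral inequality \rref{786}. Because $Q\succeq\tfrac12\,\mathrm{diag}\{Q\}$, it suffices to prove the entrywise comparison $\tfrac12\,\mathrm{diag}\{Q\}\succ(\eta_2^2+\eta_3^2)\,\mathrm{diag}\{\norm{[P\mathcal{B}]_i}^2\}$; with $P=I$ one reads off from the definition of $\mathcal{B}$ that $\norm{[\mathcal{B}]_i}^2=(\prod_{j=0}^{i-1}\beta_j)^2$ in the $\tilde\xi$-coordinates and $\norm{[\mathcal{B}]_{m+i}}^2=(b_i\prod_{j=0}^{i-1}\beta_j)^2$ in the $\vartheta$-coordinates, where $\beta_0:=1$. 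The two resulting scalar inequalities, $k_{p_i}-k_{p_{i-1}}>(\eta_2^2+\eta_3^2)(\prod_{j=0}^{i-1}\beta_j)^2$ and $a_i>(\eta_2^2+\eta_3^2)(b_i\prod_{j=0}^{i-1}\beta_j)^2$, are exactly \rref{795b} and \rref{795c} for $i\ge 2$ and \rref{794b} for $i=1$. With both matrix requirements and \rref{lego} in force, Theorem~\ref{thm:main3} delivers uniform global asymptotic stability. The main obstacle is the first step: the ``direct albeit long'' block bookkeeping confirming $-(\mathcal{A}+\mathcal{A}^\top)=Q$, and in particular checking that the gain relations \rref{794a}, \rref{795a} simultaneously clear every $\eta_{i_k}$ while leaving the $\mu_{i_k}$-couplings intact.
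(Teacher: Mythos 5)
Your proposal is correct and follows essentially the same route as the paper's proof: there too one sets $P=I$, uses \rref{794a} and \rref{795a} (via \rref{771.5}) to annihilate every $\eta_{i_k}$ so that $-(\mathcal A^\top+\mathcal A)$ reduces to the displayed $Q$, and exploits the hypothesis $Q-\frac{1}{2}\,\mathrm{diag}\{Q\}\succeq 0$ to reduce the spectral condition \rref{786} to the scalar bounds \rref{794b}, \rref{795b}, \rref{795c} with the weights $\prod_{j=0}^{i-1}\beta_j$ and $b_i\prod_{j=0}^{i-1}\beta_j$ read off from the blocks of $\mathcal B$. The only packaging difference is that the paper re-derives the dissipation inequalities \rref{800} and \rref{820} and finishes ``as for Theorem \ref{thm:main2}'' instead of citing Theorem \ref{thm:main3} as a black box, whereas you make explicit the step---left implicit in the paper, and a welcome clarification---that $Q\succ 0$ with $P=I$ gives Hurwitzness of $\mathcal A$ via the Lyapunov inequality.
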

\begin{remark}
  Note, from \rref{771}, that $\beta_j$ depends on $k_{d_j}$, $k_{p_j}$ and $b_j$ therefore, the right-hand side of \rref{795b} depends on the control gains indexed up to $i-1$ only. Thus, by a suitable definition of $k_{d_i}$ in \rref{795a} both conditions  \rref{795a} and \rref{795b} may be met simultaneously. 

\end{remark}
\noindent {\bf Proof of Proposition \ref{prop1}}. 
Equation \rref{795a} implies, in view of \rref{771.5}, that $\eta_{i_k} = 0$ for all applying $i$ and $k$ therefore, $Q= -\mathcal A^\top P - P \mathcal A$ with $P=I$ corresponds to the matrix $Q$ in Proposition \ref{prop1}. This matrix is positive semidefinite if (i) the shadowed block $Q_1\kronecker I$ is positive definite, which holds if $k_{p_i}>k_{p_{i-1}}$ and $k_{p_i}[k_{p_i} - k_{p_{i-1}}] > 1$ for all $i\leq m$; (ii) the shadowed block $Q_3\kronecker I$ is positive definite, which holds for sufficiently large values of $a_i > a_{i-1}$. Under similar conditions (strengthened if necessary) the Schur complement of $Q_3\kronecker I$ {\em i.e.}, $[Q_1 - Q_2^\top Q_3^{-1}Q_2]\kronecker I $, is positive. One may draw the same conclusion for $Q\,-$\,\textonehalf\,diag$\{Q\}$ by enforcing the lower bounds on $a_i$ and $k_{p_i}$ as in \rref{795}. Thus, consider the function defined in \rref{789} with $P= I$ then, using $Q\,-$\,\textonehalf\,diag$\{Q\}\geq 0$ we obtain, from \rref{790},
\begin{eqnarray}
  \dot W &\leq & - \sfrac{1}{2} \left(\sum_{i=1}^m [k_{p_i} - k_{p_{i-1}}]\norm{\topxii}^2 + a_i\norm{\vartheta_i}^2\right)
- \left(\sum_{i=1}^m \mbox{\small $\displaystyle\prod_{j=0}^{i-1}$}\beta_j\topxii^\top - \mbox{\small $\displaystyle b_{i}\prod_{j=0}^{i-1}$}\beta_j\vartheta_i^\top \right)\dot \opxi_1^*. 
\label{800}
\end{eqnarray}
Next, considering once more \rref{650}, we obtain
\[
\big(\topxii^\top+\vartheta_i^\top\big)\dot\opxi_1^* \leq \sfrac{1}{2}\sum_{i=1}^m\shrinkthis[1]{0.95}{[\eta_2^2+\eta_3^2]}\Big(\norm{\topxii}^2 + \norm{\vartheta_i}^2\Big) + \eta_1\Big(\norm{\topxii} + \norm{\vartheta_i}\Big) + \Big[\norm{\dtq}^2 + \norm{\vartheta_0}^2\Big]m
\]
hence in view of \rref{794} and \rref{795} we conclude that there exist positive constants $\alpha_{\shrinkthis[0.6]{0.7}{1}_i}$, $\alpha_{\shrinkthis[0.6]{0.7}{2}_i}$ for all $i\in\{1\ldots, m\}$, such that the total derivative of 
\[
\mathcal V(t,\opchi) =  \sfrac{1}{2}\left(
\dtq^\top D(\tq+\qd(t)) \dtq + k_{p_0} \norm{\tq}^2 + \frac{k_{d_0}}{b_0} \norm{\vartheta_0}^2 \right)
+ \sfrac{1}{2}\norm{\opchi}^2
\]
along the closed-loop trajectories of \rref{765}, \rref{770} satisfies
\begin{eqnarray}
    \dot {\mathcal V} &\leq&  - \sfrac{1}{2} \left(\sum_{i=1}^m \alpha_{\shrinkthis[0.6]{0.7}{1}_i}\norm{\topxii}^2 + \alpha_{\shrinkthis[0.6]{0.7}{2}_i}\norm{\vartheta_i}^2\right)
+ \eta_1\Big(\norm{\topxii} + \norm{\vartheta_i}\Big) 
\notag \\ && \quad 
 -\Big[\sfrac{a_0k_{d_0}}{b_0}-m\Big]\norm{\vartheta_0}^2 + \big[k_ck_\delta + \sfrac{m+2}{2}\big] \norm{\dtq}^2
\label{820}
\end{eqnarray}
 --\cf Ineq. \rref{700}. The rest of the proof follows as for Theorem \ref{thm:main2}. 
\qed 

We wrap up  the section with a statement for the simplified model of flexible-joint manipulators simultaneously and independently introduced in \cite{BURZAR} and \cite{SPONGASME}. This corollary is another significant contribution to the theory of robot control since we are not aware of any result establishing uniform {\em global} asymptotic stability under output feedback. It must be recalled that very few results exist on position feedback tracking control of flexible-joint manipulators, one of the first of this nature is \cite{NICTOM3}.\\[-3mm]

\begin{corollary}[Flexible-joint manipulators]
Consider the system \rref{595} and suppose that\\[-9mm]
\begin{itemize}\itemsep=-3pt
\item item (2) of Assumption \ref{ass2} holds;
\item the reference trajectories satisfy \rref{608};
\item there exists $k_v>0$ such that  $\norm{\frac{\partial \mbox{\small \rm g}}{\partial q}} \leq k_v$.
\end{itemize}
Consider the controller defined by
\begin{subequations}\label{830}
  \begin{eqnarray}
\label{830a}
 \tau & =& K(q_2-q_1) + JK^{-1}u, \\
&& \hspace{-13mm} \mbox{\rm Equations \rref{750} with $m=2$, }\\
\label{830c} \opxi_1^* & =&  -k_{p_0} \tilde q - k_{d_0} \vartheta_0 + D(q_1)\ddot q_{1d} + C(q_1,\dot q_{1d})\dot q_{1d} + \g(q_1) + Kq_1,\\ 
  \end{eqnarray}
\end{subequations}
and assume that the control gains satisfy \rref{794}, \rref{795} with $m=2$ and $k_{d_0} b_0/a_0 > 2k_ck_\delta$. Then, the origin of the closed-loop system is uniformly globally asymptotically stable.
\end{corollary}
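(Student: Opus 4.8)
The plan is to reduce the flexible-joint model \rref{595} to an instance of the relative-degree-$(m+2)$ plant \rref{600} with $m=2$ and then to invoke Proposition \ref{prop1} verbatim. The reduction is exactly the one sketched before Example \ref{rem:rmk3}: set $\opxi_1 := Kq_2$ and $\opxi_2 := K\dot q_2$, so that, upon defining $g(q_1) := \g(q_1)+Kq_1$, Eq. \rref{595a} reads $D(q_1)\ddot q_1 + C(q_1,\dot q_1)\dot q_1 + g(q_1) = \opxi_1$ and $\dot\opxi_1 = \opxi_2$ holds by construction. Differentiating $\opxi_2$ and using \rref{595b} gives $\dot\opxi_2 = KJ^{-1}[\tau - \opxi_1 + Kq_1]$; substituting the preliminary feedback $\tau = K(q_2-q_1)+JK^{-1}u$ of \rref{830a} (which equals $\tau = \opxi_1 - Kq_1 + JK^{-1}u$ since $\opxi_1 = Kq_2$) yields $\dot\opxi_2 = u$. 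Hence the closed loop is precisely \rref{600} with $m=2$. Because $K$ is constant and invertible, the coordinate change together with the preliminary feedback is a globally defined, globally invertible transformation, so uniform global asymptotic stability of the reduced error system is equivalent to that of the original one.

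Next I would verify Assumptions \ref{ass1} and \ref{ass2} for the reduced plant, which is where the three hypotheses of the corollary enter. Assumption \ref{ass1} concerns only the intrinsic link-side quantities $D(q_1)$ and $C(q_1,\dot q_1)$ of \rref{595a}, which for a manipulator with only revolute or only prismatic joints satisfy the stated bounds, the factorization property and the skew-symmetry of $\dot{\overparen{D(q_1)}}-2C(q_1,\dot q_1)$, independently of the joint flexibility. For Assumption \ref{ass2}, item (2) is exactly the first hypothesis; item (1), namely \rref{605}, becomes $D(q_{1d})\ddot q_{1d}+C(q_{1d},\dot q_{1d})\dot q_{1d}+\g(q_{1d})+Kq_{1d}=0$ once $g=\g+Kq_1$ is inserted, i.e. the reference condition \rref{608} postulated in the second hypothesis. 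This also makes $\opxi_1^*$ in \rref{830c} vanish along $(q_{1d},\dot q_{1d},0)$, so that the origin of the reduced error system is indeed an equilibrium. For item (3) I would note that $\sfrac{\partial g}{\partial q} = \sfrac{\partial \g}{\partial q} + K$, whence $\norm{\sfrac{\partial g}{\partial q}} \leq k_v + \norm{K}$; the bound on $\sfrac{\partial \g}{\partial q}$ is the third hypothesis and $K$ is a constant matrix, so \rref{607} holds. This is exactly the point flagged in the footnote of Example \ref{rem:rmk3}, and it is the only place where the redefinition of the potential-force term needs care.

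Finally I would identify \rref{830c} with \rref{610a} (for the redefined $g$) and the second line of \rref{830} with \rref{750} for $m=2$, so that the controller coincides with the output-feedback law of Proposition \ref{prop1}. With the gains assumed to satisfy \rref{794}, \rref{795} for $m=2$ and the inner-loop damping requirement, all hypotheses of Proposition \ref{prop1} are met and uniform global asymptotic stability follows directly. The main obstacle I anticipate is not conceptual but one of bookkeeping: one must reconcile the inner-loop condition stated in the corollary, $k_{d_0}b_0/a_0 > 2k_ck_\delta$, with the stronger requirement \rref{lego} (the $m=2$ generalization of \rref{632a}) and with positive semidefiniteness of $Q-\mbox{\small $\frac{1}{2}$}\mbox{diag}\{Q\}$ demanded by Proposition \ref{prop1}. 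This is achieved by first taking $a_0/b_0$ large enough that the bracket $[\,a_0/4b_0 - m\,]$ in \rref{lego} is positive, and then choosing $k_{d_0}$ large enough to satisfy both \rref{lego} and the stated ratio simultaneously; no conflict arises since $k_{d_0}$ is free. Once these gain inequalities are checked, Proposition \ref{prop1} applies without modification and the claim is established.
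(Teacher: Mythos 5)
Your reduction of \rref{595} to \rref{600} with $m=2$, the verification of Assumptions \ref{ass1} and \ref{ass2} for the redefined potential term $g(q_1)=\g(q_1)+Kq_1$ (including the bound $\norm{\partial g/\partial q}\leq k_v+\norm{K}$ with $K$ constant, the point flagged in the footnote to Example \ref{rem:rmk3}), and the identification of \rref{830} with \rref{610a} and \rref{750} are all correct and coincide with the paper's setup, which delegates precisely these points to Example \ref{rem:rmk3} and the discussion preceding Section \ref{sec:mequal1}. Your remark that the stated inner-loop condition $k_{d_0}b_0/a_0>2k_ck_\delta$ must be strengthened to be compatible with \rref{lego}, and that this is feasible by first fixing $a_0/b_0$ and then enlarging $k_{d_0}$, is also sound.

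The gap is in the final step, and it sits exactly where the paper's proof does its actual work. Proposition \ref{prop1} does not apply ``verbatim'' under the corollary's hypotheses: besides \rref{794}--\rref{795}, it requires positive semidefiniteness of $Q-\frac{1}{2}\,\mbox{\rm diag}\{Q\}$ and condition \rref{786}, and neither is implied by \rref{794}--\rref{795} alone. You list the semidefiniteness requirement among the items to be reconciled, but your proposed resolution---enlarge $a_0/b_0$, then take $k_{d_0}$ large---manipulates only the inner-loop gains $(a_0,b_0,k_{d_0})$, and these gains do not appear in $Q$ at all, so your fix cannot deliver that hypothesis. The paper's proof of the corollary consists almost entirely of the missing verification: since $k_{d_1}=k_{p_1}$ by \rref{794a}, the $\eta$-terms vanish and one may take $P=I$; $Q$ is then computed explicitly for $m=2$, with off-diagonal coupling $\mu=k_{d_1}[k_{p_1}+a_1]$; positive semidefiniteness of $Q-\frac{1}{2}\,\mbox{\rm diag}\{Q\}$ is reduced to that of three $2\times 2$ blocks, i.e., to the scalar inequalities $k_{p_1}[k_{p_2}-k_{p_1}]\geq 2$, $a_1[k_{p_2}-k_{p_1}]\geq 4\mu^2$ and $a_1a_2\geq 2b_2^2\mu^2$; and \rref{786} is reduced to elementwise diagonal bounds involving $\beta=k_{p_1}(1+b_1)$. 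These cross-coupling inequalities are not innocuous bookkeeping: $\mu$ itself grows with $a_1$ and $k_{p_1}$, so ``take all gains large'' fails, and the correct order of quantifiers is to fix $k_{p_1}=k_{d_1}$, $a_1$ and $b_2$ first and then enlarge $k_{p_2}>k_{p_1}$ (through $k_{d_2}$ via \rref{795a}, so as not to violate \rref{795b}) and $a_2>a_1$---which is exactly how the paper closes the argument. Without carrying out this computation, or an equivalent one, the appeal to Proposition \ref{prop1} is unjustified.
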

\begin{proof}
We have $m=2$; the matrix $Q$ in Proposition \ref{prop1} becomes
\[
Q  = 
\begin{bmatrix}  \hspace{2mm} 
    \shrinkthis[1]{0.9}{2k_{p_1}}  &   -1     & 0  &  0 \\
 -1  & 
 \shrinkthis[1]{0.9}{2[k_{p_2}\hspace{-1mm}-k_{p_1}]}  & -\mu & 0\\
\phantom{-}0   & -\mu  & 2a_1 & -b_2\mu \\
\phantom{-}  0   &  \phantom{-}0 & \hspace{-1.3mm}-b_2\mu & \phantom{-} 2a_2 
 \end{bmatrix}  
\kronecker I 
\]
where $\mu = k_{d_1}[k_{p_1}+a_1]$ and, by assumption, $k_{d_1}=k_{p_1}$. We see that $Q\,-$\,\textonehalf\,diag$\{Q\}$ is positive semidefinite if so are 
\[
\begin{bmatrix}  \hspace{2mm} 
    \shrinkthis[1]{0.9}{k_{p_1}}  &   -1   \\
 -1  & 
 \shrinkthis[1]{0.9}{\mbox{\textonehalf}[k_{p_2}\hspace{-1mm}-k_{p_1}]}
\end{bmatrix}, \quad 
\begin{bmatrix}  \hspace{2mm} 
 \shrinkthis[1]{0.9}{\mbox{\textonehalf}[k_{p_2}\hspace{-1mm}-k_{p_1}]}  & -\mu\\
 -\mu  & \mbox{\textonehalf}a_1 
\end{bmatrix}, \quad 
\begin{bmatrix}
\mbox{\textonehalf} a_1 & -b_2\mu \\
-b_2\mu & \phantom{-} a_2 
\end{bmatrix}
\]
which holds true if all constants are positive, $k_{p_1} [k_{p_2}-k_{p_1}] \geq 2$, $a_1 [k_{p_2}-k_{p_1}] \geq 4\mu^2$ and $a_1a_2 \geq 2b_2^2\mu^2$. 

Furthermore, condition \rref{786} holds if 
\[
  \sfrac{1}{2}\mbox{\rm diag}\{Q\} > (\eta_2^2+\eta_3^2)\mbox{\rm diag}\big\{ \norm{ [ P\mathcal B]_i }^2\big\}
\]
where in this case, since $P=I$, is equivalent to 
\[
\begin{bmatrix}  \hspace{2mm} 
    \shrinkthis[1]{0.9}{k_{p_1}}   &   0     & 0  &  0 \\
0  & 
 \shrinkthis[1]{0.9}{[k_{p_2}\hspace{-1mm}-k_{p_1}]}   & 0 & 0\\
0   & 0  & a_1  & 0 \\
0  & 0 & 0 & a_2
 \end{bmatrix}  
 > 
\begin{bmatrix}  \hspace{2mm} 
 1  &   0     & 0  &  0 \\
0  & 
\beta^2   & 0 & 0\\
0   & 0  & b_1^2 & 0 \\
0  & 0 & 0 & b_1^2\beta^2 
 \end{bmatrix}\big[\eta_2^2+\eta_3^2]  
\]
where $\beta= k_{p_1} + b_1k_{d_1}$ or, since $k_{p_1}=k_{d_1}$, $\beta= k_{p_1}(1 + b_1)$. 

All these conditions, which correspond to \rref{794} and \rref{795}, may be met for appropriate values of $k_{p_2}>k_{p_1}$ and $a_2>a_1$.  
\end{proof}
\section{Discussion and open problems}
\label{sec:disc}
There is an interesting passivity interpretation to the control approach presented in the previous sections. For simplicity, the following discussion focuses on the system of relative degree 2. 

The output-feedback tracking controller \rref{bla} may be regarded as composed of two parts, a set-point control law of Proportional-Derivative with gravity cancellation (similar to that from \cite{KEL1}) and a second part playing the role of a  ``feedforward'', this is
\begin{equation}
  \label{eq:200}
v :=  D(q)\ddqd + C(q,\dqd)\dqd + k_pq_d.
\end{equation}
Then, in view of forward completeness, it follows from Assumption 1 and expression \rref{bndonqd} that $v(t) = D(q(t))\ddqd(t) + C(q(t),\dqd(t))\dqd(t) + k_p\qd(t)$  is uniformly bounded. With this notation the control input \rref{90c} may be re-written as
\begin{equation}
\label{eq:250}
  u = -k_p q - k_d\vartheta + g(q) + v
\end{equation}
and the closed-loop system takes the form 
\begin{equation}
  \label{eq:300}
  \dot x = F(x) + G(x)w, \quad x:= [q^\top\, \dot q^\top\, \vartheta^\top]^\top
\end{equation}
where 
\[  F:=
  \begin{bmatrix}
    \dot q \\
D(q)^{-1} [-C(q,\dot q)\dot q -k_p q - k_d\vartheta]\\
-a\vartheta + b\dot q
  \end{bmatrix}, \quad w:=
  \begin{bmatrix}
    0\\ v \\ \dqd
  \end{bmatrix},
\quad G(x):=
  \begin{bmatrix}
    0 & 0 & 0 \\ 0 & D(q)^{-1} & 0 \\ 0 & 0 & -b
  \end{bmatrix}.
\]
The dynamics $\dot x = F(x)$ corresponds to that of the closed-loop system with the set-point controller from \cite{KEL1} taking as set-point reference $\qd =0$ that is, \rref{eq:250} with $v = 0$. Now, the total time-derivative of the {\em storage} function
\begin{equation}
  \label{eq:400}
  V(q,\dot q,\vartheta) = \frac{1}{2}\left( \dot q^\top D(q) \dot q + k_p \norm{q}^2 + \frac{k_d}{b}\norm{\vartheta}^2\right)
\end{equation}
along the trajectories of \rref{eq:300}, takes the form 
\begin{equation*}
\dot V = -\frac{k_da}{b}\norm{\vartheta}^2 - \vartheta^\top b \dqd + \dot q^\top v.
\end{equation*}
Since $b \dqd$ is bounded, for any given positive number $\lambda$ we have
\begin{equation}\label{above}
  \norm{\vartheta}\geq \lambda \ \Longrightarrow \ \dot V \leq -\left(\frac{k_da}{b} - \frac{bk_\delta^2}{\lambda}\right)\norm{\vartheta}^2  + \dot q^\top v
\end{equation}
which implies that the map $v\mapsto \dot q$ is passive for ``large'' values of the filter output $\vartheta$. It is also to be noted that the filter \rref{90a}, \rref{90b} defines an output strictly passive map $\dot{\tilde q}\mapsto \vartheta$ with finite $\mathcal L_2$ gain. This provides an input-output-based rationale to establish uniform global boundedness: if the outputs of the filter \rref{filter} grow ``large'' the feedback-interconnected  system ``becomes'' passive hence, input-output stable.

A key feature of the controller \rref{90} and which is exploited to extend the results to systems of higher relative degree is that it guarantees uniform global asymptotic stability. This is remarkable since, according to Malkin, this implies {\em total stability}\footnote{Concept introduced in \cite{MALKIN54} and known in modern literature as local Input-to-State-Stability.} that is, robustness with respect to bounded disturbances. However, this property is guaranteed only locally; establishing {\em global} Input-to-State-Stability via a (strict) Lyapunov function remains an open challenge both, for Lagrangian systems under output-feedback and more generally, for nonlinear-time varying systems. 

In the context of the ``global tracking problem'' for robot manipulators, a remarkable paper pursuing this direction is \cite{NUNHSU} however, the main result in this reference relies on the instrumental assumption that the system is naturally damped by viscous friction forces --see the model \rref{confriccion}. Then, a direct computation shows that by applying 
\[  u = -k_p q - k_d\vartheta + g(q) + F\dot q_d + v
\]
to system \rref{confriccion} we obtain the ``power'' balance equation 
\begin{equation}
 \label{eq:410}
 \dot V \leq  -\frac{k_da}{b}\norm{\vartheta}^2 - ( F - k_ck_\delta)\norm{\dtq}^2 + [0\,\ \dot{\tilde q}^\top\, \vartheta^\top]^\top w
\end{equation}
which is in clear contrast with \rref{140:b}. The closed-loop system now defines an output strictly passive map $w\mapsto [\vartheta,\,\dtq]$. Furthermore, either for sufficiently large $F$ or for sufficiently small $k_\delta$ (which tantamounts to imposing ``slow'' reference trajectories) we see that $w=0$ implies $\dot V \leq 0$. 

Since $\dot V$ is only negative semidefinite for the system without input, it does not qualify as an input-to-state-stable Lyapunov function. Nonetheless, the authors of \cite{NUNHSU} smartly establish input-to-state-stability under output feedback. It is worth emphasizing that $V$ constitutes a so-called Lyapunov function ``{\em satisfying Lasalle's conditions}''; conditions to establish input-to-state-Stability in such context have been studied with certain degree of generality, for instance in \cite{ANGPD}, where the main result is also motivated by a robot control problem,  as well as in \cite{MAZBOOK} and a number of references therein. 

Roughly speaking, in \cite{ANGPD} it is established that a {\em time-invariant} system 
\begin{equation}
  \label{eq:600}
  \dot x = f(x,w) \qquad x:=[x_1^\top\, x_2^\top]^\top \in \mR^m
\end{equation}
with input $w$, admits an  input-to-state-stability Lyapunov function $V$ (positive definite and proper) such that 
\[
 \frac{\partial V}{\partial x}f(x,w) \leq -\alpha_1(\norm{x_1}) + \gamma(\norm{w}),\quad \alpha_1\in\mathcal K_\infty, \ \gamma\in \mathcal K
\]
provided that:
\begin{itemize}
\item  there exist positive-definite proper functions $V_1$ and $V_2$;
\item  there exist class $\mathcal K$ functions $\alpha_{11}$, $\alpha_{12}$, $\alpha_{21}$, $\alpha_{22}$, $\gamma_1$, $\gamma_2$ such that
\begin{equation}
  \label{eq:550}
  \frac{\partial V_1}{\partial x}f(x,w) \leq -\alpha_{11}(\norm{x_2}) + \alpha_{12}(\norm{x_2})\gamma_1(\norm{w}) 
\end{equation}
\begin{equation}
  \label{eq:560}
 \frac{\partial V_2}{\partial x}f(x,w) \leq -\alpha_{21}(\norm{x}) + \alpha_{22}(\norm{x_2})\gamma_2(\norm{w});  
\end{equation}
\item the functions $\alpha_{22}$ and $\alpha_{11}$ have the same order of growth. 
\end{itemize}
The first property {\em i.e.}, the existence of $V_1$ satisfying \rref{eq:550}, in \cite{ANGPD} is called quasi-Input-to-State-Stability. The prefix ``quasi'' is motivated by the fact that $V_1$ satisfies ``Lasalle''-type conditions for global asymptotic stability, when $w \equiv 0$. The second property is  referred to as Input-Output-to-State stability with output $x_2$ and it is a notion of detectability for nonlinear systems. Now, note that \rref{eq:410} is of the form \rref{eq:550} however, this is not the case for \rref{above} which fails to satisfy this condition since the arguments of $\alpha_{11}$ and $\alpha_{12}$ are different. The property may be established if we assume $F=F^\top>0$.

 To the best of our knowledge, constructing a strict Lyapunov function for the problem stated in Definition \ref{def:pblm} is an open problem which is illustrated by but not limited to the case of the controller \rref{90}. In a general nonlinear context, the state of the art in constructing Lyapunov functions for nonlinear time-varying systems relies on Lyapunov functions that have negative semi-definite derivatives --see \cite{MAZBOOK}, as opposed to $V$ defined in \rref{eq:400}. The construction of an Input-to-State-Stability Lyapunov function $V$ for systems satisfying 
\[
\norm{x_1}\geq \lambda\ \Longrightarrow \  \frac{\partial V}{\partial x}f(x,w) \leq -\alpha_1(\norm{x_1}) + \alpha_2(\norm{x_2})\gamma(\norm{w})
\]
rather than \rref{eq:550} is, in our opinion, another challenging and interesting open problem. 

Last but not least, we remark that keeping in mind \cite{MAZPRADAY}, the key property that allows for the result in Theorem 4 is the skew-symmetry of $\dot{\overparen{D(q)}} - 2C(q,\dot q)$. Therefore, a {\em geometric} interpretation of this property is fundamental to establish a statement that apply to a larger class of Euler-Lagrange systems, if such extension is possible at all. 

\section{Conclusions}
\label{sec:concl}
A dynamic position-feedback controller for Lagrangian systems without dissipative forces and a constructive proof of uniform global asymptotic stability for the closed-loop system is presented. Our simplest result, which closes a significant chapter on output feedback control of nonlinear systems,  implicitly establishes, {\em without a Lyapunov function}, the very intuitive conjecture that the damping necessary to stabilize the system may be introduced through a simple approximate-derivatives filter. Instrumental to the proof is that such filter has finite DC gain. Furthermore, it is proved that such a naive control design may be applied with success to systems of higher relative degree, using cascaded approximate differentiators. 

The importance of these results can hardly be overestimated; we believe that our findings may pave the way towards a simple observer-less dynamic output feedback control approach inspired by the backstepping method but avoiding the cumbersome highly nonlinear resulting control laws. On the grounds of systems analysis, we have briefly sketched new challenging open problems on construction of strict Lyapunov functions for systems satisfying Lasalle's conditions modulo the gain of a passive filter. Research in these directions is currently pursued.

\vskip 9pt
\centerline{\bf Acknowledgements}

The author is indebted to R. Ortega for the arbitrarily large (but bounded) number of discussions on this problem since he proposed it to the author as PhD subject, 20 years ago. The construction of the proofs reflects the little the author has learned out of the much E. Panteley has taught him for more than 17 years.

\clearpage 

{\small
\def\loria{Lor\'{\i}a}
  \def\nesic{Ne\v{s}i\'{c}\,}\def\astrom{{\SortNoop{As}\AA}str{\"{o}}m\,}\def\%
nonumero{\def\numerodeitem{}}

}
\appendix

    \setlength{\baselineskip}{22pt plus 2pt minus 1pt}
\section{Appendix}

Without loss of generality, we fix $m=4$. 

\begin{description}
 \item[\bf Case $i=1$:] 
   \begin{minipage}[t]{5cm}
     \begin{eqnarray*}
           \opxi_1^* &=& \opxi_1^*\\
           \dtopxi{1} &=& -k_{p_1}\topxi{1} + k_{d_1}\vartheta_1 + \topxi{2} - \dot\opxi_1^*\\
           \dot\vartheta_1 & = & -a_1\vartheta_1 + b_1\dot\opxi_1^* - k_{d_1}\topxi{1}
     \end{eqnarray*}
   \end{minipage}
%
 \item[\bf Case $i=2$:] 
     \begin{eqnarray*}
           \dot \opxi_2^* &=& -k_{p_1}\dtopxi{1} + k_{d_1}\dot \vartheta_1\\
                          &=& -k_{p_1}\big[ -k_{p_1}\topxi{1} + k_{d_1}\vartheta_1 + \topxi{2} - \dot\opxi_1^* \big] + k_{d_1}\big[ -a_1\vartheta_1 + b_1\dot\opxi_1^* - k_{d_1}\topxi{1} \big]\\
                         &=& \big[ k_{p_1}^2 - k_{d_1}^2\big]\topxi{1}  - k_{d_1}\big[ k_{p_1} + a_1\big] \vartheta_1-k_{p_1}\topxi{2}  + \big[k_{p_1} + k_{d_1}b_1\big]\dopxi{1}^* \\
           \dtopxi{2} &=& -\big[ k_{p_2} - k_{p_1}\big] \topxi{2}+ k_{d_2}\vartheta_2 + \topxi{3} - \big[ k_{p_1}^2 - k_{d_1}^2\big]\topxi{1} +k_{d_1}\big[ k_{p_1} + a_1\big] \vartheta_1-\big[k_{p_1} + k_{d_1}b_1\big]\dopxi{1}^* 
\\ 
\dtopxi{2} &=& -\big[ k_{p_2} - k_{p_1}\big] \topxi{2}+ k_{d_2}\vartheta_2 + \topxi{3} - \eta_{2_1}\topxi{1} + \mu_{2_1}\vartheta_1 - \beta_1\dopxi{1}^* 
\\ 
\dot\vartheta_2 & = & -a_2\vartheta_2 + b_2\Big[ 
\big[ k_{p_1}^2 - k_{d_1}^2\big]\topxi{1} - k_{d_1}\big[ k_{p_1} + a_1\big] \vartheta_1 -k_{p_1}\topxi{2} +  \big[k_{p_1} + k_{d_1}b_1\big]\dopxi{1}^*
\Big]- \big[k_{d_2}-\sigma_2\big]\topxi{2}
\\ & = & 
-a_2\vartheta_2 + b_2\eta_{2_1}\topxi{1}-b_2\mu_{2_1}\vartheta_1+ b_2\beta_1\dopxi{1}^* -  k_{d_2} \topxi{2}
     \end{eqnarray*}
%
 \item[\bf Case $i=3$:] 
     \begin{eqnarray*}
           \dot \opxi_3^* &=& -k_{p_2}\dtopxi{2} + k_{d_2}\dot \vartheta_2 \\
\dtopxi{3} &=& -k_{p_3}\topxi{3} + k_{d_3} \vartheta_3 + \topxi{4} + 
k_{p_2} \Big[-\big[ k_{p_2} - k_{p_1}\big] \topxi{2}+ k_{d_2}\vartheta_2 + \topxi{3} - \big[ k_{p_1}^2 - k_{d_1}^2\big]\topxi{1}
\\   &&\qquad + k_{d_1}\big[ k_{p_1} + a_1\big] \vartheta_1-\big[k_{p_1} + k_{d_1}b_1\big]\dopxi{1}^* \Big]
-k_{d_2} \Big[
-a_2\vartheta_2 + b_2\Big[ 
\big[ k_{p_1}^2 - k_{d_1}^2\big]\topxi{1} \\
&& \qquad \quad - k_{d_1}\big[ k_{p_1} + a_1\big] \vartheta_1 -k_{p_1}\topxi{2} +  \big[k_{p_1} + k_{d_1}b_1\big]\dopxi{1}^*
\Big]- \big[k_{d_2}-\sigma_2\big]\topxi{2}   \Big]
\\
\dtopxi{3} &=& -\big[ k_{p_3} - k_{p_2}\big]\topxi{3} +  k_{d_3}\vartheta_3+ \topxi{4} -\big[ k^2_{p_2} - k_{d_2}^2\big] \topxi{2} + k_{d_1}\big[ k_{p_1} + a_1\big] \big[ k_{p_2} + k_{d_2}b_2\big]\vartheta_1\\
&&\quad  + k_{d_2}\big[ k_{p_2} + a_2\big] \vartheta_2- \big[ k_{p_1}^2 - k_{d_1}^2\big]\big[k_{p_2} + k_{d_2}b_2\big]\topxi{1}-\big[k_{p_2} + k_{d_2}b_2\big]\big[k_{p_1} + k_{d_1}b_1\big]\dopxi1^* \\ &&\qquad + \big[k_{p_1}k_{p_2}+ k_{p_1}b_2k_{d_2}-\sigma_2k_{d_2}\big]\topxi2
\\
\dtopxi{3} &=&-\big[ k_{p_3} - k_{p_2}\big]\topxi{3} +  k_{d_3}\vartheta_3+ \topxi{4} - \eta_{3_2}\topxi{2} + \mu_{3_1}\vartheta_1+ \mu_{3_2}\vartheta_2-\eta_{3_1}\topxi{1}-\beta_2\beta_1\dopxi{1}^*
 \\ 
\dot\vartheta_3 & = & -a_3\vartheta_3 + b_3\Big[\beta_2\beta_1\dopxi{1}^* + \eta_{3_1}\topxi{1} -k_{p_2}\topxi{3}-\mu_{3_1}\vartheta_1 + \eta_{3_2}\topxi{2}-\mu_{3_2}\vartheta_2\Big] -\big[k_{d_3}-\sigma_3\big]\topxi{3} \\ 
\dot\vartheta_3 & = & -a_3\vartheta_3 + b_3\Big[\beta_2\beta_1\dopxi{1}^* + \eta_{3_1}\topxi{1} -\mu_{3_1}\vartheta_1 + \eta_{3_2}\topxi{2}-\mu_{3_2}\vartheta_2\Big] -k_{d_3}\topxi{3}
   \end{eqnarray*}
 \item[\bf Case $i=4$:] 
     \begin{eqnarray*}
           \dot \opxi_4^* &=& -k_{p_3}\dtopxi{3} + k_{d_3}\dot \vartheta_3 
\\ \dtopxi{4} &=&            - k_{p_4}\topxi{4} + k_{d_4}\vartheta_4 + k_{p_3}\Big[ -\big[ k_{p_3} - k_{p_2}\big]\topxi{3}+ k_{d_3}\vartheta_3 + \topxi{4} - \big[ k_{p_2}^2 - k_{d_2}^2-k_{p_1}k_{p_2}\big]\topxi{2} 
\\ &&\quad 
+ k_{d_1}\big[ k_{p_1} + a_1\big] \big[ k_{p_2} + k_{d_2}b_2\big]\vartheta_1+ k_{d_2}\big[ k_{p_2} + a_2\big]\vartheta_2 - \big[ k_{p_1}^2 - k_{d_1}^2\big]\big[ k_{p_2} + k_{d_2}b_2\big]\topxi{1}
\\ &&\quad
-\big[ k_{p_2} + k_{d_2}b_2\big]\big[ k_{p_1} + k_{d_1}b_1\big]\dopxi{1}^*\Big]
-k_{d_3}\Big[ -a_3\vartheta_3 + b_3\Big(\big[ k_{p_2} + k_{d_2}b_2\big]\big[ k_{p_1} + k_{d_1}b_1\big]\dopxi{1}^*
\\ &&\quad 
+\big[k_{p_2} + k_{d_2}b_2\big]\big[ k_{p_1}^2 - k_{d_1}^2\big]\topxi{1}- k_{p_2}\topxi{3} 
- \big[k_{p_2} + k_{d_2}b_2\big]k_{d_1}\big[ k_{p_1} + a_1\big]\vartheta_1-
 \\ &&\quad 
\big[ k_{p_2}^2 - k_{d_2}^2-k_{p_1}k_{p_2}\big]\topxi{2}-k_{d_2}\big[ k_{p_2}+a_2\big]\vartheta_2 \Big)- \big[k_{d_3}-\sigma_3\big]\topxi{3}  
 \Big] \\
 \dtopxi{4} &=& -\big[ k_{p_4} - k_{p_3} \big]\topxi{4}+ k_{d_4}\vartheta_4 + k_{d_3}\big[k_{p_3} + a_3\big]\vartheta_3 +\big[ k_{p_2}+a_2\big]k_{d_2}\big[k_{p_3} + k_{d_3}b_3\big]\vartheta_2\\ &&\quad 
+k_{d_1}\big[ k_{p_1} + a_1\big]\big[k_{p_2} + k_{d_2}b_2\big]\big[k_{p_3} + k_{d_3}b_3\big]\vartheta_1
-\big[ k_{p_3}^2 - k_{d_3}^2\big]\topxi{3}\\ &&\quad -\big[ k_{p_2}^2 - k_{d_2}^2-k_{p_1}k_{p_2}\big]\big[ k_{p_3} + k_{d_3}b_3\big]\topxi{2} -\big[ k_{p_1}^2 - k_{d_1}^2\big]\big[ k_{p_2} + k_{d_2}b_2\big]\big[ k_{p_3} + k_{d_3}b_3\big]\topxi{1} \\ &&\quad -\big[ k_{p_1} + k_{d_1}b_1\big]\big[ k_{p_2} + k_{d_2}b_2\big]\big[ k_{p_3} + k_{d_3}b_3\big]\dopxi1^*+\big[k_{p_2}k_{p_3}+ k_{p_2}b_3k_{d_3}-\sigma_3k_{d_3}\big]\topxi{3}
\\
 \dtopxi{4} &=&-\big[ k_{p_4} - k_{p_3} \big]\topxi{4}+ k_{d_4}\vartheta_4 + \mu_{43}\vartheta_3 + \mu_{42}\vartheta_2 + \mu_{41}\vartheta_1 -\eta_{43} \topxi{3} -\eta_{42} \topxi{2} -\eta_{41} \topxi{1} -\beta_3\beta_2\beta_1\dopxi{1}^*
 \\ 
\dot\vartheta_4 & = & -a_4\vartheta_4 + b_4\Big[\prod_{j=1}^i\beta_j\dopxi{1}^* + \eta_{4_1}\topxi{1} -k_{p_3}\topxi{4}-\mu_{41}\vartheta_1 + \eta_{42}\topxi{2}-\mu_{42}\vartheta_2+ \eta_{43}\topxi{3}-\mu_{43}\vartheta_3\Big] \\ && \quad-\big[k_{d_4}-\sigma_4\big]\topxi{4}
\\ 
\dot\vartheta_4 & = & -a_4\vartheta_4 + b_4\Big[\prod_{j=1}^i\beta_j\dopxi{1}^* + \eta_{4_1}\topxi{1} -\mu_{41}\vartheta_1 + \eta_{42}\topxi{2}-\mu_{42}\vartheta_2+ \eta_{43}\topxi{3}-\mu_{43}\vartheta_3\Big] -k_{d_4}\topxi{4}
    \end{eqnarray*}
\end{description}
\end{document}